\documentclass[reqno]{amsart}
\usepackage[left=2.6cm,right=2.6cm,top=3cm,bottom=3cm]{geometry}
\usepackage[backref=page]{hyperref}
\usepackage{cite}
\usepackage{enumitem}
\usepackage{graphicx}   
\usepackage{caption} 
\usepackage{subcaption}
\usepackage{xcolor}

\newcommand{\rev}[1]{#1}

\theoremstyle{plain}
\newtheorem{theorem}{Theorem}
\newtheorem{proposition}[theorem]{Proposition}

\newtheorem{lemma}[theorem]{Lemma}

\theoremstyle{definition}

\theoremstyle{remark}
\newtheorem{remark}{Remark}

\allowdisplaybreaks

\begin{document}
	
	\author{Phuoc Vinh Dinh}
	\address{Phuoc Vinh Dinh$^{1,2,3}$ (ORCID: 0009-0000-6297-7798)\newline
		$^1$Faculty of Applied Sciences, Ho Chi Minh City University of Technology (HCMUT), 268 Ly Thuong Kiet Street, Dien Hong Ward, Ho Chi Minh City, Vietnam;\newline
		$^2$Vietnam National University, Ho Chi Minh City, Vietnam\newline
		$^3$Department of Mathematics, FPT University, Ho Chi Minh City, Vietnam}
	\email{dpvinh.sdh251@hcmut.edu.vn, vinhdp2@fe.edu.vn}
	
	\author{Phuong Le}
	\address{Phuong Le$^{4,2}$ (ORCID: 0000-0003-4724-7118)\newline
		$^4$Faculty of Economic Mathematics, University of Economics and Law, Ho Chi Minh City, Vietnam; \newline
		$^2$Vietnam National University, Ho Chi Minh City, Vietnam}
	\email{phuongl@uel.edu.vn}
	
	\author{Tien Dung Nguyen}
	\address{Tien Dung Nguyen$^{1,2}$\newline
		$^1$Faculty of Applied Sciences, Ho Chi Minh City University of Technology (HCMUT), 268 Ly Thuong Kiet Street, Dien Hong Ward, Ho Chi Minh City, Vietnam;\newline
		$^2$Vietnam National University, Ho Chi Minh City, Vietnam}
	\email{dungnt@hcmut.edu.vn}
	
	
	\title[A free boundary problem for spreading of an invasive species]{A free boundary problem for spreading of an invasive species in the territory of a native competitor under shifting climate}

	\date{\today}

	\begin{abstract}
		In this paper we consider a free boundary problem for the spreading of an invasive species in the habitat of a native competitor. We assume that the invasive species benefits from a climate shift that turns the environment from unfavorable to favorable at a constant speed $c$. We show that if the invasive species is an inferior one, then it must vanish, while the native competitor always persists. However, if the invasive species is a superior one, then a dichotomy occurs: either the invasive species vanishes and the native one persists, or the invasive one spreads and the native one vanishes. We also show that in the latter case the asymptotic spreading speed equals $\min\{c, c_0\}$, where $c_0$ is the spreading speed in the corresponding homogeneous environment. Numerical simulations are provided to illustrate our theoretical results.
	\end{abstract}

	\subjclass[2020]{35K51, 35R35, 35B40, 92D25, 35Q92}
	
	\dedicatory{}
	
	\keywords{free boundary problem, shifting environment, climate change, competition system, semi-wave, asymptotic spreading speed, spreading--vanishing dichotomy}
	
	\maketitle
	
	\section{Introduction and statement of the results}\label{sec_1}
	
	Understanding how species respond to climate change is a central topic in ecology and mathematical biology. A classical approach uses reaction–diffusion equations to model the shifting of habitats and the spread of populations \cite{MR2471053,MR3377515,MR3691993}. When a species expands into a new territory, the boundary of its habitat is often unknown in advance, leading to the introduction of \emph{free boundary problems}. A seminal work in this direction is the paper by Du and Lin~\cite{MR2607347}, where a diffusive logistic model with a free boundary was proposed to describe the spreading of a single species. The authors established a sharp spreading–vanishing dichotomy: the species either spreads to the whole space (and stabilizes to a positive equilibrium) or vanishes as time goes to infinity, depending on the initial habitat size and the expansion capability of the species.
	
	In many realistic situations, however, the environment itself is not stationary but changes over time due to climate warming. To capture this effect, Hu et al.~\cite{HU20205931} (see also \cite{MR3871607,MR3639148}) incorporated a shifting habitat into the free boundary framework. They studied a model of the form
	\[
	\begin{cases}
		u_t = du_{xx} + (A(x - ct) - bu) u, & t > 0,~ 0 < x < h(t), \\
		u_x(t, 0) = u(t, h(t)) = 0, & t > 0, \\
		h'(t) = -\mu(A(h(t) - ct))u_x(t, h(t)), & t > 0, \\
		h(0) = h_0, \; u(0, x) = u_0(x), & 0 \leq x \leq h_0,
	\end{cases}
	\]
	where $u(t,x)$ is the population density, $h(t)$ the free boundary, and $A(x-ct)$ describes an environment that shifts at constant speed $c>0$. The function $A$ is assumed to be Lipschitz continuous on $\mathbb{R}$, strictly decreasing on $[0,l_0]$, and satisfies
	\[
	A(\xi) =
	\begin{cases}
		a_1 & \text{if } \xi \le 0, \\
		a_0 & \text{if } \xi \ge l_0,
	\end{cases}
	\]
	with $l_0 > 0$ and $a_0 < 0 < a_1$. Thus the environment changes from unfavorable ($A \le a_0<0$) to favorable ($A \ge a_1>0$) at speed $c$. The function $\mu$ is Lipschitz continuous and increasing on $[a_0,a_1]$, with $0<\mu(a_0)\le\mu(a_1)$, and it governs the resistance of the boundary to expansion. We shall keep the same structural assumptions on $A$ and $\mu$ throughout this paper. \rev{It is convenient, and clearly harmless, to extend $\mu$ to the whole of $\mathbb{R}$ by setting $\mu(s):=\mu(a_0)$ for $s<a_0$ and $\mu(s):=\mu(a_1)$ for $s>a_1$. The extended function is again Lipschitz continuous and nondecreasing. This extension will only be used in Section~\ref{sec_4}, where semi-waves associated with slightly perturbed growth rates are considered.}
	
	For a single species, Hu et al.~\cite{HU20205931} proved that the spreading–vanishing dichotomy persists and, when spreading happens, the asymptotic spreading speed is the minimum of the climate shifting speed $c$ and the intrinsic speed of the species in a homogeneous environment. Related free boundary problems have been further investigated in \cite{MR3764580,MR3691993,MR3377515,MR2471053,MR4404216,MR4873231,MR4766631}, among others.
	
	Another important extension is to consider the interaction between an invasive species and a native competitor. In the absence of climate shift, Du and Lin~\cite{MR3327894} studied a two–species competitive free boundary problem where the invasive species $u$ occupies a region $[0,h(t)]$ and the resident species $v$ lives in the whole half–line $[0,\infty)$. The system reads
	\begin{equation}\label{main2}
	\begin{cases}
		u_t = d_1u_{xx} + (a_1 - b_1u - c_1 v) u, & t > 0,~ 0 < x < h(t), \\
		v_t = d_2v_{xx} + (a_2 - b_2u - c_2v)v,   & t > 0,~ 0 < x < +\infty, \\
		u_x(t, 0) = v_x(t, 0) = 0, \; u(t, x) = 0, & t > 0, ~ h(t) \leq x < +\infty, \\
		h'(t) = -\mu(a_1)u_x(t, h(t)),            & t > 0, \\
		h(0)=h_0,~ u(0, x) = u_0(x),             & 0 \leq x \leq h_0, \\
		v(0, x) = v_0(x),                         & 0 \le x < +\infty.
	\end{cases}
	\end{equation}
	According to the competitive ability of the invader, two distinct scenarios are identified:
	\begin{itemize}
		\item \textbf{Inferior competitor:} $\displaystyle\frac{a_1}{a_2} < \min\Bigl\{\frac{b_1}{b_2},\frac{c_1}{c_2}\Bigr\}$. In this case the invader $u$ always disappears, while $v$ converges to the carrying capacity $a_2/c_2$.
		\item \textbf{Superior competitor:} $\displaystyle\frac{a_1}{a_2} > \max\Bigl\{\frac{b_1}{b_2},\frac{c_1}{c_2}\Bigr\}$. Then a spreading–vanishing dichotomy occurs: either $u$ vanishes or it spreads successfully, driving $v$ to extinction.
	\end{itemize}
	Later, the spreading speed of the superior invader was determined by Du, Wang and Zhou~\cite{MR3609207} (see also~\cite{MR3986328} for the weak competition case). They showed that when spreading happens, there exists a unique asymptotic speed $c_0>0$ such that $h(t)/t \to c_0$, and $c_0$ can be characterized via a semi–wave solution. In fact, $c_0$ depends on the parameter $\hat\mu = \frac{a_1}{b_1d_2}\mu(a_1)$ and is strictly increasing in $\hat\mu$, with $c^* := \lim_{\hat\mu\to+\infty}c_0 < +\infty$.
	
	The above studies either incorporate climate change for a single species or consider two–species competition in a static environment. The simultaneous effect of a shifting climate and inter-specific competition in a free boundary setting was first addressed by Lei et al.~\cite{Lei2018}. They considered two competing species that both expand according to a free boundary but are subject to a common shifting habitat. It was shown that the inferior competitor always goes extinct, while for superior competitors the spreading–vanishing dichotomy still holds. However, in many real invasions, the native species is already well established across the whole habitat and does not experience a shifting boundary; only the invasive species faces a climate–driven moving frontier.
	
	Motivated by this observation, in the present paper we study the problem
	\begin{equation}\label{main}
		\begin{cases}
			u_t = d_1u_{xx} + (A(x - ct) - b_1u - c_1 v) u, & t > 0,~ 0 < x < h(t), \\
			v_t = d_2v_{xx} + (a_2 - b_2u - c_2v)v,          & t > 0,~ 0 < x < +\infty, \\
			u_x(t, 0) = v_x(t, 0) = 0, ~ u(t, x) = 0,        & t > 0, ~ h(t) \leq x < +\infty, \\
			h'(t) = -\mu (A(h(t) - ct))u_x(t, h(t)),          & t > 0, \\
			h(0)=h_0,~ u(0, x) = u_0(x),                       & 0 \leq x \leq h_0, \\
			v(0, x) = v_0(x),                                  & 0 \le x < +\infty,
		\end{cases}
	\end{equation}
	where $d_1,d_2,a_2,b_1,b_2,c_1,c_2$ and $h_0$ are positive constants, and the initial functions satisfy
	\begin{equation}\label{initial}
		\begin{cases}
			u_0 \in C^2([0,h_0]),\quad u_0 > 0 \text{ in } [0,h_0), ~ u_0'(0) = u_0(h_0) = 0,\\
			v_0 \in C^2([0,+\infty)) \cap L^\infty([0,+\infty)),\quad v_0 > 0 \text{ in } [0,+\infty), ~ v_0'(0) = 0.
		\end{cases}
	\end{equation}
	Here only the invasive species $u$ feels the shifting climate $A(x-ct)$, while the native competitor $v$ lives in a globally favorable environment with constant growth rate $a_2$. This configuration is new and presents additional mathematical challenges compared to the purely shifting or purely competitive cases.
	
	When the shifting term $A$ is replaced by the constant $a_1$, system \eqref{main} reduces to \eqref{main2} (the model of Du and Lin~\cite{MR3327894}). To state our results, we keep the same notions of inferior and superior competitor:
	\begin{equation}\label{inferior}
		\frac{a_1}{a_2} < \min\left\{\frac{b_1}{b_2}, \frac{c_1}{c_2}\right\},
	\end{equation}
	\begin{equation}\label{superior}
		\frac{a_1}{a_2} > \max\left\{\frac{b_1}{b_2}, \frac{c_1}{c_2}\right\}.
	\end{equation}
	
	Our first main theorem deals with the inferior competitor.
	\begin{theorem}\label{th:inferior}
		Let $(u, v, h)$ be the unique solution of \eqref{main} with initial functions satisfying \eqref{initial}. Suppose that \eqref{inferior} holds. Then $u$ vanishes in the long run, that is,
		\[
		\lim_{t\to\infty} (u(t,\cdot), v(t,\cdot))
		= \left(0, \frac{a_2}{c_2}\right) \text{ in } L^\infty_{\rm loc}([0,+\infty)).
		\]
		Moreover, if $\inf_{x\in[0,+\infty)} v_0(x) > 0$, then $\lim_{t\to\infty} h(t) < +\infty$ and
		\[
		\lim_{t\to\infty} u(t,x) = 0,\quad \lim_{t\to\infty} v(t,x) = \frac{a_2}{c_2} \text{ uniformly in } [0, \infty).
		\]
	\end{theorem}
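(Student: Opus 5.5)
The plan is to compare with the Du--Lin system \eqref{main2}. Since $A(\xi)\le a_1$ for all $\xi$ and $\mu(A(\cdot))\le\mu(a_1)$, the solution of \eqref{main} should be a lower solution of \eqref{main2} with the same initial data. The comparison principle for competitive free boundary systems (as in Du--Lin~\cite{MR3327894}) then gives
\[
h\le \bar h,\qquad u\le \bar u,\qquad v\ge \bar v,
\]
where $(\bar u,\bar v,\bar h)$ solves \eqref{main2}. Because the free boundary condition involves $\mu(A(h-ct))$ rather than a constant, we must check the inequality $h'(t)\le -\mu(a_1)u_x(t,h(t))$. This holds because $u_x(t,h(t))\le 0$ and $\mu(A)\le \mu(a_1)$. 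With this ordering in hand, the first assertion should follow from the inferior-competitor result of~\cite{MR3327894}, namely $\bar u\to0$ and $\bar v\to a_2/c_2$ locally uniformly. Indeed, $u\le\bar u$ gives $u\to0$. For $v$, we have $v\ge\bar v$, and from above $v$ is controlled by the solution of the logistic equation $w_t=d_2w_{xx}+(a_2-c_2w)w$, which tends to $a_2/c_2$ uniformly. Together these give $v\to a_2/c_2$ in $L^\infty_{\rm loc}$.

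If instead we want a self-contained argument, we would use the classical iteration. First, $\limsup v\le a_2/c_2$ uniformly, and $\limsup u\le a_1/b_1$. Using \eqref{inferior}, choose $\varepsilon$ small. Then $v$ is a supersolution of $v_t\ge d_2v_{xx}+(a_2-b_2(a_1/b_1+\varepsilon)-c_2v)v$, which gives a positive lower bound $\underline v_1$ for $v$ at large times on compacts. This in turn lowers the bound on $u$ to $\bar u_1=(a_1-c_1\underline v_1)^+/b_1$. Because $a_1/a_2<c_1/c_2$, after finitely many steps $a_1-c_1\underline v_k<0$, so $u\to0$ and then $v\to a_2/c_2$. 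The difficulty is that these bounds are only local on $[0,\infty)$, because $v$ is only known to be positive and not uniformly bounded below. This is why the first statement is local.

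For the second part, assume $\inf v_0>0$. Then the lower bounds above hold uniformly on $[0,\infty)$: the ODE comparison $v\ge \underline w(t)$, with $\underline w$ solving the spatially homogeneous equation, gives uniform-in-$x$ estimates. So after finite time $T$ we have $c_1 v\ge a_1+\delta$ on all of $[0,\infty)$, which yields
\[
u_t\le d_1u_{xx}-\delta u \quad\text{on }[0,h(t)] \text{ for } t\ge T.
\]
To show $h_\infty<\infty$, we would build an exponentially decaying upper solution as in Du--Lin: take
\[
\bar h(t)=H\bigl(2-e^{-\sigma (t-T)}\bigr),\qquad \bar u=Me^{-\sigma(t-T)}\cos\Bigl(\frac{\pi x}{2\bar h(t)}\Bigr),
\]
with $\sigma<\delta$, $H\ge h(T)$ and $M$ chosen so that $\bar u(T)\ge u(T)$. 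The condition $\bar h'\ge-\mu(a_1)\bar u_x$ then holds after taking $M$ small relative to $H$, which is fine because $H$ can be chosen large, and $\mu(A)\le\mu(a_1)$. Comparison gives $h(t)\le 2H$, so $h_\infty<\infty$. Finally, $\|u(t)\|_\infty\le Me^{-\sigma(t-T)}\to0$. The uniform convergence of $v$ then follows from $u\to0$ uniformly, by sandwiching $v$ between solutions of the logistic ODE with perturbed rates $a_2-b_2\varepsilon$ and $a_2$, both starting from positive constants.

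The main obstacle is the comparison step. The free-boundary speed factor $\mu(A(h-ct))$ depends on $h$, and $u$ carries the $x$-dependent coefficient $A$. We must therefore verify that the Du--Lin comparison lemma applies to mixed lower/upper pairs with this extra nonautonomous term. We would do this by checking the differential inequalities directly, together with the sign $u_x(t,h(t))<0$ from the Hopf lemma.
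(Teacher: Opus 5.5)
Your first part is the paper's argument. $(u,v,h)$ is a lower solution of \eqref{main2} because $A\le a_1$, $\mu(A(\cdot))\le\mu(a_1)$ and $u_x(t,h(t))\le0$, and the $L^\infty_{\rm loc}$ convergence then follows from \cite[Theorem~3.1]{MR3327894} plus the uniform bound $\limsup_{t\to\infty}v\le a_2/c_2$. One imprecision: get that bound from the ODE solution started at $\|v_0\|_\infty$. The logistic PDE started from $v_0$ need not converge uniformly on $[0,\infty)$ when $v_0$ decays, though you only use it as an upper bound.

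For the second part you take a genuinely different route. The paper keeps the ordering $h\le\tilde h$, $u\le\tilde u$, $v\ge\tilde v$ with the solution $(\tilde u,\tilde v,\tilde h)$ of \eqref{main2}. It then invokes \cite[Theorem~3.3]{MR3327894}, which under $\inf v_0>0$ gives $\tilde h_\infty<\infty$ and uniform convergence, so everything transfers in one line. That citation was already available to you from your first step.

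You instead reprove the result directly in the shifting setting, in two stages.
\begin{enumerate}
\item A uniform-in-$x$ iteration by spatially homogeneous comparison. This is valid because $\inf v_0>0$ propagates to $\inf_x v(T,\cdot)>0$ for every finite $T$. Also, under \eqref{inferior} the affine map $\bar u\mapsto\frac{1}{b_1}\bigl(a_1-\frac{c_1}{c_2}(a_2-b_2\bar u)\bigr)$, started at $a_1/b_1$, becomes nonpositive after finitely many steps.
\item The decaying upper solution $\bar h(t)=H(2-e^{-\sigma(t-T)})$, $\bar u=Me^{-\sigma(t-T)}\cos(\pi x/(2\bar h(t)))$.
\end{enumerate}

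The upper solution checks out:
\begin{enumerate}
\item Since $\bar h'\ge0$, you get $\bar u_t-d_1\bar u_{xx}+\delta\bar u\ge(\delta-\sigma)\bar u\ge0$.
\item The free boundary inequality amounts to $M\le 2\sigma H^2/(\pi\mu(a_1))$.
\item The initial ordering needs $M\ge\|u(T,\cdot)\|_\infty/\cos(\pi h(T)/(2H))$, and this is compatible with the previous condition once $H$ is large.
\item The touching argument goes through because $h'(t)\le-\mu(a_1)u_x(t,h(t))$.
\end{enumerate}

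Your route is longer but self-contained and quantitative: it gives exponential decay of $\|u(t,\cdot)\|_\infty$ and the explicit bound $h_\infty\le 2H$. The paper's route is a one-line reduction that rests entirely on the cited theorem for the homogeneous system.
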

	
	\begin{remark}
		The weaker $L^\infty_{\rm loc}$ convergence in Theorem~\ref{th:inferior} requires no additional assumption on $v_0$; for the stronger uniform-in-$x$ convergence, one needs the invasive species $u$ to be competing against a native population that is present everywhere, which is ensured by $\inf_{x\in[0,+\infty)} v_0(x) > 0$.
	\end{remark}
	
	For the superior competitor we obtain a spreading–vanishing dichotomy and, more importantly, an explicit formula for the spreading speed that combines the climate shifting speed $c$ and the intrinsic spreading speed $c_0$ of the homogeneous system.
	
	\begin{theorem}\label{th:superior}
		Let $(u, v, h)$ be the unique solution of \eqref{main} with initial functions satisfying \eqref{initial}. Suppose that \eqref{superior} holds. Then either one of the following cases happens:
		\begin{enumerate}
			\item[(i)] \textit{Vanishing of $u$:} $\displaystyle\lim_{t\to\infty} h(t) \le \frac{\pi}{2}\sqrt{\frac{d_1}{a_1-a_2c_1/c_2}}$ and
			$\displaystyle\lim_{t\to\infty} \|u(t,\cdot)\|_{C([0,h(t)])} = 0$,
			$\displaystyle\lim_{t\to\infty} v(t,x) = \frac{a_2}{c_2}$ uniformly in any compact subset of $[0, \infty)$;
			\item[(ii)] \textit{Spreading of $u$:} $\displaystyle\lim_{t\to\infty} h(t) = +\infty$ and
			$\displaystyle\lim_{t\to\infty} (u(t,x), v(t,x)) = \left(\frac{a_1}{b_1},0\right)$ uniformly in any compact subset of $[0, \infty)$. If we further assume
			$\displaystyle\liminf_{x\to\infty} v_0(x) > 0$, then
			\[
			\lim_{t\to\infty} \frac{h(t)}{t} = \min\{c, c_0\}.
			\]
		\end{enumerate}
	\end{theorem}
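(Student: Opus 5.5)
The plan has three parts: first the dichotomy, then the long-time limits in each alternative, and finally the spreading speed in case (ii).

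\emph{The dichotomy.} Since $A(x-ct)\le a_1$ and $A$ is Lipschitz and decreasing, standard parabolic estimates together with the free boundary condition give $0<h'(t)\le C$ and uniform $C^{1+\alpha}$ bounds on $u$ near $x=h(t)$. Let $h_\infty:=\lim_{t\to\infty}h(t)\in(0,+\infty]$.

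\emph{Case $h_\infty<\infty$.} First, $\|u(t,\cdot)\|_{C([0,h(t)])}\to0$. The argument is the usual contradiction: if not, a limit along time translates would solve the $u$-equation on a bounded interval with $h'\to0$ but $u_x(t,h(t))\not\to0$, which is impossible by the Hopf lemma. Once $u\to0$, the $v$-equation is asymptotically logistic, so $v\to a_2/c_2$ locally uniformly by comparison with the ODE and with Dirichlet problems on large balls.

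To bound $h_\infty$, note that for large $t$ we have $h(t)-ct\to-\infty$. Hence $A(x-ct)=a_1$ on $[0,h(t)]$, and the $u$-equation becomes
\[
u_t=d_1u_{xx}+(a_1-b_1u-c_1v)u, \qquad c_1v\le c_1\frac{a_2}{c_2}+\varepsilon.
\]
If $h_\infty>\frac{\pi}{2}\sqrt{d_1/(a_1-a_2c_1/c_2)}$, the principal Neumann--Dirichlet eigenvalue on $(0,h(T))$ would be negative for large $T$. A small multiple of the eigenfunction is then a lower solution, which gives $\liminf u>0$, a contradiction.

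\emph{Case $h_\infty=\infty$.} Since $h'\le C$, we have $h(t)\le ct$ only if $C\le c$. In either case, on every compact set $x-ct\to-\infty$, so eventually $A=a_1$ there. I will then run the Du--Lin iteration from \cite{MR3327894} for the superior case. Take the upper limits $\bar u=\frac{a_1}{b_1}$, $\bar v=\frac{a_2}{c_2}$. Compare $u$ on growing intervals $[0,L]$ with Dirichlet logistic problems whose growth rate is reduced by $c_1\bar v$; this is positive by \eqref{superior}, and it yields lower bounds $\underline u_1$. These in turn give sharper upper bounds for $v$, and so on. Condition \eqref{superior} makes the resulting monotone sequences converge to $(a_1/b_1,0)$ locally uniformly.

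\emph{Spreading speed.} For the upper bound, $h(t)\le ct+l_0+o(t)$: beyond $ct+l_0$ the growth rate is $a_0<0$, and one can build an upper solution using the single-species result of \cite{HU20205931}, since $u$ is a subsolution of the single-species problem with rate $A(x-ct)-b_1u$. Also $h(t)/t\le c_0+o(1)$ follows by comparing with the homogeneous competition system, using the semi-wave of \cite{MR3609207} shifted far ahead. Here $\liminf_{x\to\infty}v_0>0$ ensures that $v$ stays near $a_2/c_2$ ahead of the front, so the homogeneous semi-wave upper solution applies.

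For the lower bound, $\liminf h(t)/t\ge\min\{c,c_0\}-\varepsilon$. I will construct lower solutions from the semi-wave $(\Phi,\Psi,c_\delta)$ associated with growth rate $a_1-\delta$ and $\mu(a_1-\delta)$; the extension of $\mu$ is used here. If $c_0<c$, then $c_\delta<c$, so the wave front $x\approx c_\delta t$ stays eventually in the region where $A=a_1$. I then place a perturbed semi-wave pair $\bigl(\underline u,\overline v\bigr)=\bigl((1-\varepsilon)\Phi(c_\delta t-x+K),\ (1+\varepsilon)\Psi(\cdots)\bigr)$ after a large time $T$, using the locally uniform convergence from step (ii) to order the initial data. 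If $c\le c_0$, I will use a moving-frame lower solution of speed $c-\varepsilon$ supported where $A\ge a_1-\delta$, that is, at $x<c t-\eta$. Concretely, I take a compactly supported eigenfunction-type bump of large width translating at speed $c-\varepsilon$ behind the shifting edge. There $v$ is already small, because $u$ has been near $a_1/b_1$ and $v$ is suppressed. This forces $h(t)\ge(c-\varepsilon)t$.

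\emph{Main obstacle.} The hardest step is controlling $v$ in the lower-bound construction. The competitor satisfies a fixed-boundary equation on the entire half-line, so the pair lower solution has to be a coupled sub/super pair for the competitive system, with boundary behaviour matched near $x=h(t)$ and far ahead where $v\approx a_2/c_2$. I would first verify the pair inequalities for the semi-wave pair using the semi-wave ODE system and the $\delta$-perturbation slack. Then I would handle the $c\le c_0$ case by showing that in the region $x\le (c-\varepsilon)t$, $v$ decays uniformly, so that the single-species construction of \cite{HU20205931} applies with rate $a_1-\delta$.
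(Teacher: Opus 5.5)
Your treatment of the dichotomy, of the limits in both alternatives, and of the upper bound for the speed is sound. In places it is more direct than the paper's. The paper reduces the vanishing case to \cite[Theorem~4.3]{MR3327894} by noting that after $t=h_\infty/c$ the system is exactly \eqref{main2}. Your single-species comparison giving $\limsup h(t)/t\le c$ is a legitimate shortcut for Lemma~\ref{lem:limsup}.

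\textbf{The gap: the lower bound for $c\le c_0$.} The problem is the lower bound $\liminf_{t\to\infty}h(t)/t\ge\min\{c,c_0\}$, and it is decisive when $c\le c_0$.

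\emph{The bump cannot move the free boundary.} A lower solution controls $h$ only through the Stefan inequality $\underline h'(t)\le-\mu(A(\underline h(t)-ct))\,\underline u_x(t,\underline h(t))$. The edge slope of an eigenfunction-type bump is proportional to the small amplitude needed to absorb the nonlinear terms, so it is far below $c-\varepsilon$. If instead you keep the bump strictly inside $[0,h(t)]$, you are assuming $h(t)\ge(c-\varepsilon)t+R$, which is the conclusion.

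\emph{The smallness of $v$ is circular.} Proposition~\ref{prop:spreading} only gives convergence on compact sets, not along $x\approx(c-\varepsilon)t$. A supersolution $\overline v$ paired with a small $\underline u$ stays close to $a_2/c_2$, so a bump argument would only reach speeds below $2\sqrt{d_1(a_1-a_2c_1/c_2)}$, which need not exceed $c_0$. Moreover, near the invading front $v$ is never small, so no single-species construction with rate $a_1-\delta$ can be run there.

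\textbf{What the paper uses instead.} The missing idea is a coupled semi-wave of the \emph{shifting} system travelling exactly at the climate speed $c$.

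Theorem~\ref{th:semi-wave} provides, for $0<c\le c_0$, solutions $(\psi_L,\varphi_L)$ of \eqref{semi-wave} for all $L\ge0$, with $L\mapsto\psi_L'(L)$ strictly increasing. Hence there is a front position $L_0\ge0$ in the moving frame where $-\mu(A(L_0))\psi_{L_0}'(L_0)=c$. This is precisely where $c\le c_0$ enters.

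After perturbing $A$ to $A-\delta$ and $a_2$ to $a_2+\delta$, the lower solution is the triple $\underline h(t)=c(t-T)+L_\delta$, $\underline u=\psi_\delta(x-c(t-T))$, $\overline v=\tilde\varphi_\delta(x-c(t-T))$.

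\textbf{A second gap at $x=0$.} This also breaks your pair $((1-\varepsilon)\Phi,(1+\varepsilon)\Psi)$ for $c_0<c$. Behind the front a semi-wave has $\underline u_x(t,0)<0$, $\overline v_x(t,0)>0$ and $\overline v(t,0)\to0$ at a rate unrelated to that of $v(t,0)$. So neither alternative at $x=0$ in Theorem~\ref{th:comparison} can be checked, and multiplying by $1\pm\varepsilon$ does not help.

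Lemma~\ref{lm:aux} fixes this by replacing $\varphi_\delta$ with a nondecreasing $\tilde\varphi_\delta\ge\varphi_\delta$. This function is a positive constant for $x\le x_1$ and remains a supersolution, using $a_2-b_2\psi_\delta<0$ behind the front. The second alternative then holds because, by Proposition~\ref{prop:spreading}, $u(t,0)>(a_1-\delta)/b_1$ and $\sup_{[0,M]}v(t,\cdot)<\tilde\varphi_\delta(x_1)$ for all $t\ge T$.

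\textbf{The remaining cases.} The case $c=c_0$ is handled by comparison with the problem shifted at speed $c_0-\delta$, which is a lower solution since $A$ is nonincreasing. For $c>c_0$, the upper bound gives $h(t)<ct$ eventually. The system is then \eqref{main2} from some time on, and \cite[Theorem~1.1]{MR3609207} applies directly; this would also spare you your own construction in that case.
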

	
	\begin{remark}\rev{\label{rm:speed}}
		The formula $\lim_{t\to\infty} h(t)/t = \min\{c,c_0\}$ in Theorem~\ref{th:superior}(ii) has a clear ecological interpretation. The quantity $c_0$ is the intrinsic spreading speed of $u$ in the absence of a climate shift constraint, while $c$ is the speed at which the favourable habitat itself moves. When $c_0 \le c$, the invasive front advances at its own intrinsic speed; when $c_0 > c$, the front cannot outrun the shifting habitat boundary and is limited to speed $c$. Hence the effective spreading speed is $\min\{c, c_0\}$.

		\rev{The native species slows the invader down as well. Let $c_0^{\rm sing}$
		denote the asymptotic spreading speed of the single species free boundary
		problem obtained from \eqref{main2} by deleting $v$, that is, of
		\[
		\hat u_t=d_1\hat u_{xx}+(a_1-b_1\hat u)\hat u \ \text{ in } (0,\hat h(t)),\quad
		\hat u_x(t,0)=\hat u(t,\hat h(t))=0,\quad \hat h'(t)=-\mu(a_1)\hat u_x(t,\hat h(t)),
		\]
		with the same initial data (see \cite{MR2607347,MR4404216}). Since
		$\tilde v>0$, the solution $(\tilde u,\tilde v,\tilde h)$ of \eqref{main2}
		satisfies
		$\tilde u_t\le d_1\tilde u_{xx}+(a_1-b_1\tilde u)\tilde u$ together with
		$\tilde u_x(t,0)=0$ and
		$\tilde h'(t)=-\mu(a_1)\tilde u_x(t,\tilde h(t))$, so it is a lower solution
		of the above problem; hence $\tilde h\le\hat h$ and, when spreading occurs,
		$c_0\le c_0^{\rm sing}$. The invasion is therefore retarded by two
		independent mechanisms, interspecific competition and the pace of the
		climate shift, and it is the more restrictive of the two that determines
		the observed speed.}
	\end{remark}
	
	\begin{remark}
		The two conditions \eqref{inferior} and \eqref{superior} do not cover the entire parameter space; \rev{two further regimes remain, and they are of a completely different nature.}

		\rev{The first is the \emph{weak competition} regime $\dfrac{c_1}{c_2} < \dfrac{a_1}{a_2} < \dfrac{b_1}{b_2}$, in which the kinetic system has a globally attracting coexistence state. There the invasion, when successful, drives the system to a genuine
		two-species coexistence state rather than to $(a_1/b_1,0)$. This changes the
		structure of the semi-waves that govern the front, so that a separate
		analysis is needed, and we do not carry it out here.}

		\rev{The second is the \emph{strong competition} regime $\dfrac{b_1}{b_2} < \dfrac{a_1}{a_2} < \dfrac{c_1}{c_2}$, in which the kinetic system is \emph{bistable}: both semi-trivial states $(a_1/b_1,0)$ and $(0,a_2/c_2)$ are locally stable and the coexistence state, although positive, is a saddle, so that coexistence is \emph{not} the generic outcome; ecologically, this is the priority effect. This regime is not covered by the present paper. Two of the
		tools used here break down: the monotone iteration that identifies the state
		behind the front is unavailable, because the kinetic system has no globally
		attracting state. Moreover the travelling waves of a bistable competition
		system come with a single, possibly negative, speed instead of a family
		parametrised by $c$, so that the very definition of $c_0$ through a
		semi-wave identity breaks down.}
	\end{remark}
	
	\rev{\begin{remark}\label{rm:modelling}
		A word on the modelling assumption that only the invader feels the shifting
		environment. The function $A(x-ct)$ is not meant to describe climate change
		as an agent acting on the whole community. It describes the \emph{climatic
		range limit of the invader}, which is the quantity that moves. The invasive
		species considered here is one whose distribution is bounded by a climatic
		threshold (a winter isotherm, a frost-free period, a growing-degree-day
		requirement), and it is that threshold which advances at speed $c$, opening
		new territory. The native competitor, by contrast, is already established
		throughout $[0,\infty)$ and, over the spatial and temporal scales relevant
		to the invasion, sits well inside its own climatic envelope: its range
		limits lie outside the region under consideration, so that its intrinsic
		growth rate is effectively constant there. Moreover, the invader is at its
		expanding edge, precisely where population growth is most sensitive to a
		climatic gradient, whereas an established resident population responds
		demographically on a much slower time scale. The complementary situation,
		in which both species are subject to a shifting habitat and both expand
		through a free boundary, was treated by Lei et al.~\cite{Lei2018}.

		We also note that the analysis of the semi-waves and of the spreading speed
		is insensitive to this assumption: replacing $a_2$ by a function
		$\tilde a_2(x-ct)$ shifting at the \emph{same} speed $c$ leaves the system
		autonomous in the moving frame $\xi=x-ct$, and the arguments of
		Section~\ref{sec_4} go through with $\tilde a_2(\xi)$ in place of $a_2$.
		What does change is the long-time description of $v$, whose limiting state
		is then an $x$-dependent profile rather than a constant. The case of two
		species whose climatic edges move at different speeds is genuinely
		different, since no moving frame renders the problem autonomous, and
		remains open.
	\end{remark}}

	The paper is organized as follows. In Section~\ref{sec_2}, we state some basic results on \eqref{main} whose proofs can be done similarly to those in~\cite{MR3327894} and in the existing literature. We also give some lemmas for the proofs of our main results mentioned above. In Section~\ref{sec_3} and Section~\ref{sec_4}, we study long-time dynamics and the spreading speed of the model and give proofs of Theorems~\ref{th:inferior} and~\ref{th:superior}. Section~\ref{sec_5} provides numerical simulations to demonstrate our results.
	
	\section{Preliminaries}\label{sec_2}
	
	\subsection{Existence and uniqueness}
	
	Arguing as in \cite{MR3327894}, we have the following local existence and uniqueness
	result for problem \eqref{main}.
	
	\begin{theorem}\label{th:existence}
		For any given $\alpha \in (0, 1)$ and any $(u_0, v_0)$ satisfying \eqref{initial},
		there exists a $T > 0$ such that problem \eqref{main} admits a unique bounded
		solution
		\[
		(u, v, h)
		\in C^{(1+\alpha)/2,\,1+\alpha}(D_T)
		\times C^{(1+\alpha)/2,\,1+\alpha}(D_T^{\infty})
		\times C^{1+\alpha/2}([0,T]).
		\]
		Here $C^{(1+\alpha)/2,\,1+\alpha}$ denotes the parabolic H\"{o}lder
		space of functions that are H\"{o}lder continuous of exponent $\tfrac{1+\alpha}{2}$
		in $t$ and of exponent $1+\alpha$ in $x$ (in the sense of
		\cite[Chapter~IV]{MR241821}).
		Moreover,
		\[
		\|u\|_{C^{(1+\alpha)/2,\,1+\alpha}(D_T)}
		+ \|v\|_{C^{(1+\alpha)/2,\,1+\alpha}(D_T^{\infty})}
		+ \|h\|_{C^{1+\alpha/2}([0,T])} \leq C,
		\]
		where
		$D_T = \{(t,x) \in \mathbb{R}^2 : t \in [0,T],\, x \in [0, h(t)]\}$,
		$D_T^{\infty} = \{(t,x) \in \mathbb{R}^2 : t \in [0,T],\, x \in [0, +\infty)\}$,
		and $C$ and $T$ depend only on $\alpha, h_0, \|u_0\|_{C^2([0,h_0])},
		\|v_0\|_{C^2([0,\infty))}$, on $a_0, a_1, a_2, b_1, b_2, c_1, c_2$\rev{, $d_1$, $d_2$, $c$ and on the Lipschitz constants of $A$ and $\mu$}.
	\end{theorem}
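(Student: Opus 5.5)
We follow the approach of Du and Lin \cite{MR3327894} (which in turn adapts Du--Lin \cite{MR2607347} and Chen--Friedman): straighten the free boundary, solve a fixed-point problem for the triple $(u,v,h)$ in a small time interval, and use the contraction mapping principle. The only new feature is the coefficients $A(x-ct)$ and $\mu(A(h(t)-ct))$. These are Lipschitz in $(t,x)$ and in $h$, so they enter the estimates only through the Lipschitz constants of $A$ and $\mu$ and through $c$. This explains the stated dependence of $C$ and $T$.

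\emph{Step 1 (change of variables).} Fix a cutoff $\zeta\in C^3(\mathbb{R})$ with $\zeta(y)=1$ for $|y-h_0|\le h_0/8$, $\zeta(y)=0$ for $|y-h_0|>h_0/2$, and $|\zeta'|<6/h_0$. Set $x=y+\zeta(y)(h(t)-h_0)$. For $|h(t)-h_0|\le h_0/8$ this is a diffeomorphism of $[0,\infty)$ fixing a neighborhood of $0$, and it maps $[0,h_0]$ onto $[0,h(t)]$. Writing $w(t,y)=u(t,x)$ and $z(t,y)=v(t,x)$, the first two equations become uniformly parabolic equations on the fixed domains $[0,h_0]$ and $[0,\infty)$. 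Their coefficients depend on $h,h'$ and on $\zeta$. Far from $y=h_0$ the $v$-equation is unchanged, because $\zeta=0$ there. The Stefan condition becomes $h'(t)=-\mu(A(h(t)-ct))\,w_y(t,h_0)$.

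\emph{Step 2 (fixed point).} Take the closed set
\[
\Sigma_T=\{(w,z,h): \|w-u_0\|_{C(\overline{\Delta}_T)}\le1,\ \|z-v_0\|_{L^\infty(\Delta_T^\infty)}\le1,\ h(0)=h_0,\ \|h'-h_0^*\|_{C([0,T])}\le1\},
\]
with $h_0^*=-\mu(A(h_0))u_0'(h_0)$, where $w,z$ are extended by $0$ beyond $h_0$ as needed. Given $(w,z,h)\in\Sigma_T$, we proceed as follows. First, solve the linear problem for $\tilde w$ with the nonlinear reaction frozen at $(w,z)$, using $L^p$ theory \cite[Ch.~IV]{MR241821} and Sobolev embedding to get $C^{(1+\alpha)/2,1+\alpha}$ bounds. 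Next, solve for $\tilde z$ on the half-line. Here we use interior/local $L^p$ estimates on unit cubes, which are uniform because $v_0\in C^2\cap L^\infty$ and the coefficients are bounded; this gives the $C^{(1+\alpha)/2,1+\alpha}(\Delta_T^\infty)$ bound. Finally, define $\tilde h(t)=h_0-\int_0^t\mu(A(h(s)-cs))\tilde w_y(s,h_0)\,ds$. Since the map $t\mapsto\mu(A(h(t)-ct))$ is Lipschitz with constant at most $\mathrm{Lip}(\mu)\mathrm{Lip}(A)(|h'|_\infty+c)$, it follows that $\tilde h\in C^{1+\alpha/2}$. For $T$ small, these bounds show that the map $\mathcal F:(w,z,h)\mapsto(\tilde w,\tilde z,\tilde h)$ sends $\Sigma_T$ into itself.

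\emph{Step 3 (contraction).} We estimate differences $\tilde w_1-\tilde w_2$, $\tilde z_1-\tilde z_2$, $\tilde h_1-\tilde h_2$ by $C T^{\gamma}$ times the $\Sigma_T$-distance of the inputs, for some $\gamma>0$. The new terms are $A(x-ct)$ composed with the transformation and $\mu(A(h_i-ct))$. Their differences are bounded by the Lipschitz constants times $\|h_1-h_2\|_{C^1}$, so they fit the standard framework. For small $T$ the map is a contraction, and the Banach fixed point theorem gives a unique local solution. The uniform bound follows from the a priori estimates in Step~2. Boundedness of $u,v$ and positivity then follow from the comparison principle, using $A\le a_1$. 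Uniqueness in the stated class follows since any solution lies in $\Sigma_T$ for small $T$.

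The main obstacle is the unbounded domain for $v$ in Step~2. We need Hölder estimates for $\tilde z$ that are uniform on $[0,\infty)$, and the difference estimate for $\tilde z$ must be done in $L^\infty$ rather than in a weighted norm. We handle this with local $L^p$ estimates on the translated unit cubes $[0,T]\times[k,k+2]$, with constants independent of $k$, together with the maximum principle for the difference equation. The latter applies because $\tilde z_i$ are bounded and the coupling term $-b_2 w z$ is Lipschitz.
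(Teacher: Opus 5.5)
Your proposal is correct and takes essentially the same route as the paper. The paper gives no proof of its own and simply defers to the boundary-straightening plus contraction-mapping argument of Du and Lin \cite{MR3327894}; your outline reproduces that scheme, including the treatment of $v$ on the half-line, and correctly identifies the only new ingredient, namely the Lipschitz dependence of $A(x-ct)$ and $\mu(A(h(t)-ct))$ on $t$, $x$ and $h$. One minor caveat: $T$ and $C$ also depend on $\sup\mu=\mu(a_1)$ (for instance through $h_0^*$ and the requirement $|h(t)-h_0|\le h_0/8$), not only on the Lipschitz constants of $A$ and $\mu$, and this imprecision is also present in the paper's statement.
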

	
	The solution obtained in Theorem~\ref{th:existence} can be extended to all $t > 0$, yielding the following global estimates.
	
	\begin{theorem}\label{th:bounds}
		Problem \eqref{main} admits a unique global and uniformly bounded solution $(u,v,h)$ defined for all $t > 0$. Specifically, there exist positive constants $M_1$ and $M_2$ such that
		\[
		0 < u(t,x) \leq M_1
		\quad\text{for }(t, x) \in (0, +\infty) \times [0, h(t)),
		\]
		\[
		0 < v(t,x) \leq M_2
		\quad\text{for }(t, x) \in (0, +\infty) \times [0, +\infty).
		\]
		Moreover, there exists a constant $M_3$ such that
		\[
		0 < h'(t) \leq M_3 \quad\text{for } t \in (0,+\infty).
		\]
		\rev{One may take $M_1=\max\{\|u_0\|_{L^\infty},\,a_1/b_1\}$ and
		$M_2=\max\{\|v_0\|_{L^\infty},\,a_2/c_2\}$; in particular
		\[
		\limsup_{t\to+\infty}u(t,x)\le \frac{a_1}{b_1},\qquad
		\limsup_{t\to+\infty}v(t,x)\le \frac{a_2}{c_2}
		\qquad\text{uniformly for }x\ge0,
		\]
		since $A(\cdot)\le a_1$ and since $u$ and $v$ are subsolutions of the
		corresponding logistic equations.}
		Furthermore, problem \eqref{main} does not have any unbounded solution.
	\end{theorem}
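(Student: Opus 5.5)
The plan is to run the standard a priori estimate argument of Du and Lin~\cite{MR2607347,MR3327894} on the local solution of Theorem~\ref{th:existence}, and then use a continuation argument to extend it to all $t>0$. Positivity comes first. On any existence interval $[0,T]$, the strong maximum principle applied to the $u$-equation (a linear equation in $u$ with bounded coefficient $A(x-ct)-b_1u-c_1v$) gives $u>0$ in $[0,h(t))$. Since $u(t,h(t))=0$, the Hopf lemma then gives $u_x(t,h(t))<0$, and because $\mu(A(\cdot))\ge\mu(a_0)>0$ we get $h'(t)>0$. The same reasoning applied to the $v$-equation on the half-line, with $v_0>0$ and a Phragm\'en--Lindel\"of type maximum principle for bounded solutions, gives $v>0$.

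The upper bounds come from comparison with logistic ODEs. Since $A\le a_1$ and $v>0$, $u$ satisfies $u_t\le d_1u_{xx}+(a_1-b_1u)u$. Comparing with the ODE solution $\bar u(t)$ of $\bar u'=(a_1-b_1\bar u)\bar u$, $\bar u(0)=\|u_0\|_{L^\infty}$, on the moving domain (where $u=0\le\bar u$ at $x=h(t)$ and $u_x=0$ at $x=0$), we obtain $u\le\bar u(t)\le M_1=\max\{\|u_0\|_{L^\infty},a_1/b_1\}$. In the same way, since $u\ge0$, $v$ is a subsolution of $v_t=d_2v_{xx}+(a_2-c_2v)v$ on $[0,\infty)$ with a Neumann condition, so $v\le\bar v(t)\le M_2=\max\{\|v_0\|_{L^\infty},a_2/c_2\}$. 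Because $\bar u(t)\to a_1/b_1$ and $\bar v(t)\to a_2/c_2$ uniformly, the two $\limsup$ statements follow at once.

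The bound on $h'$ is the step I expect to need the most care, because the coefficient $\mu(A(h-ct))$ varies in time. However, it is bounded above by $\mu(a_1)$, so the classical barrier still works. Set
\[
w(t,x)=M_1\bigl[2M(h(t)-x)-M^2(h(t)-x)^2\bigr]\quad\text{on } \{h(t)-M^{-1}\le x\le h(t)\},
\]
with
\[
M=\max\Bigl\{\sqrt{a_1/(2d_1)},\;\tfrac{4\|u_0\|_{C^1}}{3M_1}\Bigr\}.
\]
Using $h'>0$ and the reaction term bounded by $a_1M_1$, one checks that $w_t-d_1w_{xx}\ge a_1M_1\ge$ the reaction term. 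The boundary values also work: $w\ge u$ at $x=h(t)-M^{-1}$, where $w=M_1$, and $w=u=0$ at $x=h(t)$. The choice of $M$ gives $w(0,\cdot)\ge u_0$. Comparison then yields $-u_x(t,h(t))\le 2MM_1$, and hence $h'(t)\le M_3:=2MM_1\mu(a_1)$. The only subtlety is that the domain shrinks when $h_0<M^{-1}$. In that case I would take the region $\max\{0,h(t)-M^{-1}\}\le x\le h(t)$ and use $u_x(t,0)=0\ge w_x$ there, or simply enlarge $M$ so that $M^{-1}\le h_0$.

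With these bounds in hand, $u$, $v$ and $h'$ are bounded by constants independent of $T$, as are $\|u(t,\cdot)\|_{C^2}$-type norms via the Schauder estimates used in Theorem~\ref{th:existence}. Since the $T$ there depends only on these quantities, the solution can be restarted from any time with a uniform step, which gives global existence. Uniqueness carries over from the local result, and no unbounded solution can arise, because every solution obeys the bounds above.
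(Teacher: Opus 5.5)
Your proposal is correct and is essentially the argument the paper relies on when it defers to \cite[Theorem~2.5]{MR3327894}. That argument runs as follows: the strong maximum principle and Hopf lemma give positivity and $h'>0$; comparison with logistic ODEs, using $A\le a_1$, gives $M_1$, $M_2$ and the two $\limsup$ bounds; the Du--Lin quadratic barrier near $x=h(t)$, combined with $\mu(A(\cdot))\le\mu(a_1)$, gives $M_3$; and a continuation argument gives global existence. Your treatment of the case $h_0<M^{-1}$ (for instance enlarging $M$ so that $M^{-1}\le h_0$, which is compatible with the other lower bounds on $M$) is sound, and the continuation step only needs $C^2$ bounds on $u(t,\cdot)$ and $v(t,\cdot)$ for $t\in[\delta_0,T_{\max})$ with $T_{\max}$ finite, which the standard parabolic estimates after straightening the free boundary provide.
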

	
	Since the proof of Theorem~\ref{th:bounds} is similar to that of
	\cite[Theorem~2.5]{MR3327894}, we omit the details.
	
	\subsection{Comparison principle}
	
	Similarly to Lemma~2.6 in \cite{MR3327894}, we can establish the following comparison principle, which plays a crucial role in our subsequent analysis.
	
	\begin{theorem}\label{th:comparison}
		Let $(u,v,h)$ be the unique bounded solution of \eqref{main}. Let $T \in (0,+\infty)$ and
		\begin{align*}
			&\underline{h}, \overline{h} \in C^1([0,T]),\\
			&\underline{u} \in C(\overline{U_T}) \cap C^{1,2}(U_T)
			\text{ with } U_T := \{ (t,x) \in \mathbb{R}^2 : t \in (0, T],\, x \in (0, \underline{h}(t))\},\\
			&\overline{u} \in C(\overline{V_T}) \cap C^{1,2}(V_T)
			\text{ with } V_T := \{ (t,x) \in \mathbb{R}^2 : t \in (0, T],\, x \in (0, \overline{h}(t))\},\\
			&\underline{v}, \overline{v}
			\in (L^{\infty} \cap C)([0,T] \times [0,\infty))
			\cap C^{1,2}((0,T] \times [0,\infty)),\\
			&\underline{v}, \overline{v}, \underline{u}, \overline{u} \ge 0,
			~ \underline{h}(0) > 0.
		\end{align*}
		
		Assume that
		\[
		\begin{cases}
			\overline{u}_t \ge d_1\overline{u}_{xx}
			+ (A(x - ct) - b_1\overline{u} - c_1 \underline{v}) \overline{u},
			& 0 < t \le T,~ 0 < x < \overline{h}(t), \\
			\underline{v}_t \le d_2\underline{v}_{xx}
			+ (a_2 - b_2\overline{u} - c_2\underline{v})\underline{v},
			& 0 < t \le T,~ 0 < x < +\infty, \\
			\overline{u}(t, x) = 0,
			& 0 < t \le T, ~ \overline{h}(t) \leq x < +\infty, \\
			\overline{h}'(t) \ge -\mu (A(\overline{h}(t) - ct))\overline{u}_x(t, \overline{h}(t)),
			& 0 < t \le T, \\
			\overline{h}(0) \ge h_0,~ \overline{u}(0, x) \ge u_0(x),
			& 0 \leq x \leq h_0, \\
			\underline{v}(0, x) \le v_0(x),
			& 0 \le x < +\infty,
		\end{cases}
		\]
		and either
		\[
		\overline{u}_x(t, 0) \le 0 \le \underline{v}_x(t, 0) \quad 0 < t \le T
		\]
		or
		\[
		u(t,0) \le \overline{u}(t,0) \text{ and } v(t,0) \ge \underline{v}(t,0)  \quad 0 < t \le T,
		\]
		then
		\[
		h(t) \le \overline{h}(t) \text{ in } (0,T],\quad
		u(t,x) \le \overline{u}(t,x),~ v(t,x) \ge \underline{v}(t,x)
		\text{ for } (t,x) \in (0,T] \times [0,+\infty).
		\]
		
		Similarly, if
		\[
		\begin{cases}
			\underline{u}_t \le d_1\underline{u}_{xx}
			+ (A(x - ct) - b_1\underline{u} - c_1 \overline{v}) \underline{u},
			& 0 < t \le T,~ 0 < x < \underline{h}(t), \\
			\overline{v}_t \ge d_2\overline{v}_{xx}
			+ (a_2 - b_2\underline{u} - c_2\overline{v})\overline{v},
			& 0 < t \le T,~ 0 < x < +\infty, \\
			\underline{u}(t, x) = 0,
			& 0 < t \le T, ~ \underline{h}(t) \leq x < +\infty, \\
			\underline{h}'(t) \le -\mu (A(\underline{h}(t) - ct))\underline{u}_x(t, \underline{h}(t)),
			& 0 < t \le T, \\
			\underline{h}(0) \le h_0,~ \underline{u}(0, x) \le u_0(x),
			& 0 \leq x \leq h_0, \\
			\overline{v}(0, x) \ge v_0(x),
			& 0 \le x < +\infty.
		\end{cases}
		\]		
		and either
		\[
		\underline{u}_x(t, 0) \ge 0 \ge \overline{v}_x(t, 0) \quad 0 < t \le T
		\]
		or
		\[
		u(t,0) \ge \underline{u}(t,0) \text{ and } v(t,0) \le \overline{v}(t,0)  \quad 0 < t \le T,
		\]
		then
		\[
		h(t) \ge \underline{h}(t) \text{ in } (0,T],\quad
		u(t,x) \ge \underline{u}(t,x),~ v(t,x) \le \overline{v}(t,x)
		\text{ for } (t,x) \in (0,T] \times [0,+\infty).
		\]
	\end{theorem}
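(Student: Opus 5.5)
We first straighten the free boundary and then run a standard maximum principle argument, following \cite[Lemma~2.6]{MR3327894}. We treat the upper-solution statement; the lower one is symmetric. By Theorem~\ref{th:bounds} we have $h'(t)>0$.

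\textbf{Step 1: strict ordering.} First assume strict data: $\overline h(0)>h_0$ and $\overline u(0,x)>u_0(x)$ on $[0,h_0]$. We claim $h(t)<\overline h(t)$ for all $t\in(0,T]$. If not, let $t^*$ be the first time with $h(t^*)=\overline h(t^*)$. Set $w:=\overline u-u$ on the region $\{0<x<h(t)\}$ and $z:=v-\underline v$ on $[0,\infty)$. Subtracting the equations and using $A$, $b_1$, $c_1$, $b_2$, $c_2$ together with the bounds of Theorem~\ref{th:bounds} on $u,v$ and boundedness of the comparison functions, we get, for $t\in(0,t^*]$,
\[
w_t\ge d_1 w_{xx}+\alpha_1 w+c_1 u\, z,\qquad z_t\ge d_2 z_{xx}+\alpha_2 z+b_2 v\, w,
\]
with bounded coefficients $\alpha_1,\alpha_2$. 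The off-diagonal terms have nonnegative coefficients; this is exactly the competitive (cooperative after the change $z=v-\underline v$) structure. The boundary conditions are as follows. On $x=h(t)$ we have $w=\overline u\ge 0$. At $x=0$ we have either $w_x\le0\le -z_x$ (Neumann-type, handled by the Hopf lemma) or $w,z\ge0$ directly. At $t=0$ we have $w,z\ge0$.

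The maximum principle for weakly coupled cooperative parabolic systems then gives $w,z\ge0$. Because $z$ lives on an unbounded domain, we apply it with a Phragmén–Lindelöf-type argument: add $\varepsilon e^{Kt}(1+x^2)$ to $z$ (and a constant multiple to $w$), where $K$ is large, and let $\varepsilon\to0$. Bounded solutions make this legitimate. The strong maximum principle then gives $w>0$ in the interior. At $(t^*,h(t^*))$ we have $w=0$, so the Hopf lemma yields $w_x(t^*,h(t^*))<0$, that is, $\overline u_x<u_x\le 0$ there.

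Both fronts sit at the same point, so the coefficient $\mu(A(h(t^*)-ct^*))$ is identical for the two, and it is positive. Hence
\[
\overline h'(t^*)\ge -\mu\,\overline u_x>-\mu\,u_x=h'(t^*).
\]
This contradicts $\overline h-h$ attaining its first zero at $t^*$ from above. So $h<\overline h$ on $(0,T]$, and then $u\le\overline u$ and $v\ge\underline v$ hold on the whole interval.

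\textbf{Step 2: removing strictness.} This is the main technical point, since the $\mu$-coefficient depends on the boundary position. We perturb: take $\overline h_\varepsilon(0)=(1+\varepsilon)\overline h(0)$ and shrink the initial data. Following Du–Lin, we instead compare $u$ with the solution $(u_\varepsilon,v_\varepsilon,h_\varepsilon)$ of \eqref{main} with data $h_0(1-\varepsilon)$, $u_0^\varepsilon\le u_0$ rescaled, and $v_0^\varepsilon\ge v_0$. Step 1 applies to these, giving $h_\varepsilon<\overline h$, $u_\varepsilon\le\overline u$ and $v_\varepsilon\ge \underline v$. Continuous dependence of the solution on the initial data, which follows from the fixed-point construction in Theorem~\ref{th:existence}, lets $\varepsilon\to0$ and yields the claim. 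The Lipschitz continuity of $A$ and $\mu$ is what makes this continuous dependence available.

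The lower-solution case is identical with the inequalities reversed; there Hopf's lemma gives $\underline u_x>u_x$ at a first touching point.
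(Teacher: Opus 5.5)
Your proposal is correct and is essentially the argument the paper relies on. The paper gives no separate proof and simply refers to \cite[Lemma~2.6]{MR3327894}, which is exactly the first-touching-time argument for the cooperative pair $(\overline u-u,\,v-\underline v)$ via the strong maximum principle and the Hopf lemma, followed by the Du--Lin approximation of the initial data and continuous dependence. Your observation that both fronts sit at the same point at $t^*$, so that the position-dependent factor $\mu(A(\cdot))$ is the same for both, is precisely the only modification needed to pass from \eqref{main2} to \eqref{main}.
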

	
	\begin{remark}
		A distinctive feature of the two-species system \eqref{main} is that the upper solution and lower solution are \emph{interleaved}: the upper solution for $u$
		is paired with the lower solution for $v$, and vice versa. This reflects the
		competitive interaction: an upper bound on $u$ yields a lower bound on $v$, and
		a lower bound on $u$ yields an upper bound on $v$. The pair $(\overline{u},
		\underline{v}, \overline{h})$ is called an \textit{upper solution} and $(\underline{u},
		\overline{v}, \underline{h})$ is called a \textit{lower solution} to \eqref{main}.
	\end{remark}
	
	\section{Long-time dynamics of the competitive systems}\label{sec_3}
	
	\subsection{Invasion of an inferior competitor}
	In this subsection, we deal with the situation where the invasive species is an inferior competitor. In other words, we assume that \eqref{inferior} holds.
	
	\begin{proof}[Proof of Theorem~\ref{th:inferior}]
		Since $v(t,x)$ satisfies
		\[
		\begin{cases}
			v_t - d_2 v_{xx} \le (a_2 - c_2 v)v, & t>0, ~ 0\le x<+\infty,\\
			v(0,x) = v_0(x) \ge 0,                & 0\le x<+\infty,
		\end{cases}
		\]
		we have $\limsup_{t\to+\infty} v(t,\cdot) \le \frac{a_2}{c_2}$ uniformly in
		$[0, +\infty)$.
		
		\rev{Since $A(\cdot)\le a_1$ and $\mu(A(\cdot))\le\mu(a_1)$, and since
		$u_x(t,h(t))<0$, the triple $(u,v,h)$ is a lower solution of
		\eqref{main2} in the sense of Theorem~\ref{th:comparison} (with the first
		alternative at $x=0$, both derivatives being zero there).}
		Let $(\tilde{u},\tilde{v},\tilde{h})$ denote the unique solution of \eqref{main2}, then the comparison principle gives that
		$u \le \tilde{u}$, $v \ge \tilde{v}$ and $h \le \tilde{h}$.
		
		On the other hand, by \cite[Theorem~3.1]{MR3327894}, we know that
		\[
		\lim_{t\to\infty} (\tilde{u}(t,\cdot), \tilde{v}(t,\cdot))
		= \left(0, \frac{a_2}{c_2}\right) \text{ in } L^\infty_{\rm loc}([0,+\infty)).
		\]
		\rev{Together with $0<u\le\tilde u$ this gives $u(t,\cdot)\to0$ in
		$L^\infty_{\rm loc}([0,+\infty))$, while $v\ge\tilde v$ yields
		$\liminf_{t\to\infty}v\ge a_2/c_2$ in $L^\infty_{\rm loc}([0,+\infty))$;
		combined with the upper bound $\limsup_{t\to\infty}v\le a_2/c_2$
		established above, we obtain}
		\[
		\lim_{t\to\infty} (u(t,\cdot), v(t,\cdot))
		= \left(0, \frac{a_2}{c_2}\right) \text{ in } L^\infty_{\rm loc}([0,+\infty)).
		\]
		
		Now suppose that $\inf_{x\in[0,+\infty)} v_0(x) > 0$. By
		\cite[Theorem~3.3]{MR3327894}, we have $\lim_{t\to\infty} \tilde{h}(t)
		< +\infty$ and $\lim_{t\to\infty} \tilde{u}(t,x) = 0$,
		$\lim_{t\to\infty} \tilde{v}(t,x) = \frac{a_2}{c_2}$ uniformly in
		$[0, \infty)$. As a consequence, $\lim_{t\to\infty} h(t) < +\infty$ and
		$\lim_{t\to\infty} u(t,x) = 0$, $\lim_{t\to\infty} v(t,x) =
		\frac{a_2}{c_2}$ uniformly in $[0, \infty)$.
	\end{proof}
	
	\subsection{Invasion of a superior competitor}
	In what follows, we assume that $u$ is a superior competitor, i.e., \eqref{superior}
	holds. By Theorem~\ref{th:bounds},
	\[
	h_\infty := \lim_{t\to\infty} h(t)
	\]
	exists and $h_\infty \in (h_0, +\infty]$.
	
	\begin{proposition}\label{prop:vanishing}
		If $h_\infty < +\infty$, then $h_\infty \le
		\frac{\pi}{2}\sqrt{\frac{d_1}{a_1-a_2c_1/c_2}}$ and
		$\lim_{t\to\infty} \|u(t,\cdot)\|_{C([0,h(t)])} = 0$,
		$\lim_{t\to\infty} v(t,x) = \frac{a_2}{c_2}$ uniformly in any compact subset
		of $[0, \infty)$.
	\end{proposition}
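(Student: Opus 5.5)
The plan follows the standard Du--Lin argument from \cite{MR3327894}, adapted to the shifting coefficient $A(x-ct)$. Note first that if $h_\infty<+\infty$, then for large $t$ the whole habitat $[0,h(t)]$ lies in the region where $x-ct\le 0$, since $ct\to\infty$ while $h(t)\le h_\infty$. Hence $A(x-ct)=a_1$ on $[0,h(t)]$ for all $t\ge T_0:=h_\infty/c$, and $\mu(A(h(t)-ct))=\mu(a_1)$ there. From time $T_0$ onward, \eqref{main} therefore coincides with the homogeneous system \eqref{main2}, with initial data $(u(T_0,\cdot),v(T_0,\cdot),h(T_0))$. This is the key observation. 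Since the paper allows citing results proved "similarly" to \cite{MR3327894}, the cleanest route is to re-run the vanishing argument of \cite{MR3327894} for $t\ge T_0$. I nevertheless sketch the steps.

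\textbf{Step 1 ($u\to0$).} Using Theorem~\ref{th:bounds} and parabolic $L^p$/Schauder estimates after straightening the boundary via $y=x/h(t)$, I obtain a uniform $C^{1+\alpha/2}$ bound on $h$ for $t\ge1$. Together with $h(t)\to h_\infty<\infty$, this forces $h'(t)\to0$. Suppose instead that $\limsup_{t\to\infty}\|u(t,\cdot)\|_\infty>0$. Take a sequence $t_n\to\infty$, shift time, and pass to the limit using the uniform estimates. The limit $(U,V)$ solves the system on $[0,h_\infty]$ with $U(t,h_\infty)=0$ and $U\not\equiv0$, so the Hopf lemma gives $U_x(t,h_\infty)<0$. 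But $h'(t)=-\mu(a_1)u_x(t,h(t))\to0$ forces $U_x(t,h_\infty)=0$, a contradiction. Hence $\|u(t,\cdot)\|_{C([0,h(t)])}\to0$.

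\textbf{Step 2 ($v\to a_2/c_2$ locally uniformly).} Given $\varepsilon>0$, we have $u\le\varepsilon$ for $t\ge T_\varepsilon$, and $u=0$ outside $[0,h(t)]$. So $v$ is a supersolution of $v_t=d_2v_{xx}+(a_2-b_2\varepsilon-c_2v)v$ on $[0,\infty)$ with Neumann condition at $0$ and positive initial datum at $T_\varepsilon$. Comparing with solutions on large bounded intervals $[0,L]$ with a Dirichlet condition at $L$, whose steady states tend to $(a_2-b_2\varepsilon)/c_2$ locally uniformly as $L\to\infty$, gives $\liminf v\ge (a_2-b_2\varepsilon)/c_2$ locally uniformly. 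Combined with the $\limsup$ bound in Theorem~\ref{th:bounds} and letting $\varepsilon\to0$, this yields the convergence.

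\textbf{Step 3 (bound on $h_\infty$).} Suppose $h_\infty>\ell^*:=\frac{\pi}{2}\sqrt{d_1/(a_1-a_2c_1/c_2)}$. By \eqref{superior}, $a_1-a_2c_1/c_2>0$. Fix $\ell\in(\ell^*,h_\infty)$ and $\varepsilon>0$ small enough that the principal eigenvalue of $-d_1\phi''-(a_1-c_1(a_2/c_2+\varepsilon))\phi$ on $(0,\ell)$, with $\phi'(0)=\phi(\ell)=0$, is still negative. For $t\ge T$ large we have $h(t)>\ell$, $A(x-ct)=a_1$ on $[0,\ell]$, and $v\le a_2/c_2+\varepsilon$ by Theorem~\ref{th:bounds}. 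Then $u$ is a supersolution of $w_t=d_1w_{xx}+(a_1-c_1(a_2/c_2+\varepsilon)-b_1w)w$ on $[0,\ell]$ with $w_x(t,0)=0$ and $w(t,\ell)=0$. The solution $w$ converges to the positive steady state of this logistic problem, which exists exactly because the eigenvalue is negative. Thus $\liminf u>0$ on compacts of $[0,\ell)$, contradicting Step 1. Therefore $h_\infty\le\ell^*$.

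The main obstacle is Step 1: one needs the uniform H\"older bound on $h'$ and compactness for the shifted $u$, which requires the boundary-straightening estimates of \cite{MR3327894}. Once $t\ge T_0$ these transfer verbatim, because the problem has become autonomous.
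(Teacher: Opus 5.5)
Your proposal is correct and takes the same approach as the paper. For $t\ge T_0=h_\infty/c$ the habitat lies where $A\equiv a_1$ and $\mu(A(h(t)-ct))=\mu(a_1)$, so \eqref{main} becomes \eqref{main2} with initial data taken at time $T_0$. Your Steps 1--3 unpack the vanishing result of \cite{MR3327894} (Theorem~4.3 there), which the paper simply cites after checking that the data at time $T_0$ satisfy \eqref{initial}.
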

	
	\begin{proof}
		Let $T_0 = \frac{h_\infty}{c}$. We have $h(t) < ct$ for $t>T_0$. Hence for
		$t>T_0$,
		\[
		A(x-ct) = a_1 \text{ for } x \in [0,h(t)]
		\quad\text{ and }\quad
		\mu(A(h(t)-ct)) = \mu(a_1).
		\]
		This implies that $(t,x) \mapsto (u(t+T_0, x), v(t+T_0, x), h(t+T_0))$ solves
		\eqref{main2}, where $u_0(\cdot)$, $v_0(\cdot)$ and $h_0$ are replaced by
		$u(T_0,\cdot)$, $v(T_0,\cdot)$ and $h(T_0)$, respectively\rev{. These new
		initial functions again satisfy \eqref{initial}, since $u(T_0,\cdot)>0$ on
		$[0,h(T_0))$, $u_x(T_0,0)=u(T_0,h(T_0))=0$, $v(T_0,\cdot)>0$ and
		$v_x(T_0,0)=0$}. \rev{Since the free boundary of this shifted triple still
		converges to the finite limit $h_\infty$, vanishing occurs for
		\eqref{main2}, and the conclusion} follows from Theorem~4.3 in \cite{MR3327894}.
	\end{proof}
	
	\begin{proposition}\label{prop:spreading}
		If $h_\infty = +\infty$, then $\lim_{t\to\infty} (u(t,x), v(t,x)) =
		\left(\frac{a_1}{b_1},0\right)$ uniformly in any compact subset of
		$[0, \infty)$.
	\end{proposition}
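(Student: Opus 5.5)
The plan is to sandwich $(u,v)$ between the limits of monotone iterations built from upper and lower solutions on expanding balls. The key structural point is that on any fixed compact set $[0,L]$ we have $x-ct\le 0$ for $t\ge L/c$. Hence after time $L/c$ the equation for $u$ on $[0,L]$ has the constant growth rate $a_1$, exactly as in \eqref{main2}. The only place the shifting environment matters is the region near $h(t)$. Since we only want local uniform convergence, we never need to look near $h(t)$; the sole input we use about the free boundary is $h_\infty=+\infty$.

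\textbf{Step 1 (upper bounds).} By Theorem~\ref{th:bounds}, $\limsup_{t\to\infty}u\le a_1/b_1=:\bar u_1$ uniformly in $x$. Fix $\varepsilon>0$. For $t$ large, $v$ is then a supersolution of $v_t\ge d_2v_{xx}+(a_2-b_2(\bar u_1+\varepsilon)-c_2v)v$. By \eqref{superior} we have $a_2-b_2a_1/b_1<0$, so a comparison with the ODE gives $v\to 0$ uniformly on $[0,\infty)$. We record this as $\bar v_1:=\max\{0,(a_2-b_2\bar u_1)/c_2\}=0$.

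\textbf{Step 2 (lower bound for $u$).} Since $v\to0$ uniformly, for any $L$ and large $T$ we have $c_1v\le\varepsilon$ and $A(x-ct)=a_1$ on $[T,\infty)\times[0,L]$. Since $h(t)\to\infty$, we also have $h(t)>L$ for $t\ge T$. Therefore $u$ is a supersolution on $[0,L]$ of
\[
w_t=d_1w_{xx}+(a_1-\varepsilon-b_1w)w,\quad w_x(t,0)=0,\ w(t,L)=0,\quad w(T,x)=u(T,x)>0 .
\]
The comparison principle gives $u\ge w$ there. For $L$ large, namely $L>\frac{\pi}{2}\sqrt{d_1/(a_1-\varepsilon)}$, $w$ converges to the unique positive steady state $w_L$. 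As $L\to\infty$ and $\varepsilon\to0$, $w_L\to a_1/b_1$ locally uniformly (the standard logistic fact). So $\liminf u\ge a_1/b_1$ locally uniformly, and combined with Step 1 this yields $u\to a_1/b_1$ in $L^\infty_{\rm loc}$.

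Step~1 gives $v\to0$ directly, so no further iteration is needed. This is simpler than the general argument of Du--Lin, because $\bar u_1=a_1/b_1$ already forces $v$ to extinction under \eqref{superior}.

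The main obstacle is Step~1's uniform convergence $v\to0$ on all of $[0,\infty)$. This uses the upper bound $u\le a_1/b_1+\varepsilon$ for all $x$, but $u=0$ beyond $h(t)$, where $v$ does not decay. The fix is to claim $v\to0$ only locally uniformly, which requires a local comparison. On $[0,L]$, compare $v$ with the solution of the problem $z_t=d_2z_{xx}+(a_2-b_2u-c_2z)z$ with Dirichlet datum $M_2$ at $x=L'$ for a large $L'<h(t)$. Here we use that $u\ge w$ on $[0,L']$ by Step~2, which does not depend on the decay of $v$. If we start Step~2 using only $c_1v\le c_1M_2$, we get a positive lower bound for $u$ only when $a_1>c_1a_2/c_2$ (true by \eqref{superior}) and $c_1\limsup v<a_1$. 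So the honest route is the Du--Lin monotone iteration, carried out on $[0,L']$ with large $L'$:
\[
\underline u_k=\frac{a_1-c_1\bar v_k}{b_1},\qquad \bar v_{k+1}=\frac{a_2-b_2\underline u_k}{c_2},\qquad \bar u_k=\frac{a_1-c_1\underline v_k}{b_1},\qquad \underline v_{k+1}=\frac{a_2-b_2\bar u_k}{c_2},
\]
with positive parts taken. Under \eqref{superior} this iteration converges to $(a_1/b_1,0)$, starting from $\bar v_0=a_2/c_2$ and $\underline u_0=(a_1-c_1a_2/c_2)/b_1>0$. Each step is justified by Dirichlet-ball comparisons on $[0,L']$ with boundary data $M_1,M_2$, followed by letting $L'\to\infty$. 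This is what Theorem 4.4 of \cite{MR3327894} does once $A\equiv a_1$ locally, so we can quote it after the localization.
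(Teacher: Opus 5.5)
\textbf{Where Step 1 fails.} Your Step 1 runs the comparison in the wrong direction. From $u\le a_1/b_1+\varepsilon$ you get $a_2-b_2u-c_2v\ge a_2-b_2(a_1/b_1+\varepsilon)-c_2v$. So $v$ is a \emph{super}solution of a logistic equation with negative rate, and comparison only gives a lower bound for $v$ that tends to $0$, which is useless. In a competitive system an upper bound on $u$ controls $v$ from below; this is the interleaving in Theorem~\ref{th:comparison}. To push $v$ to $0$ you need a \emph{lower} bound $u\ge a_2/b_2+\eta$ on compact sets, and that has not yet been proved. Hence $\bar v_1=0$ and ``no further iteration is needed'' are false. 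Step 2, which starts from $c_1v\le\varepsilon$, therefore has no foundation. Your later diagnosis also misplaces the difficulty. The lack of decay of $v$ beyond $h(t)$ is irrelevant, because the proposition only concerns compact sets; the real issue is the sign.

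\textbf{The fix, and how it compares with the paper.} The iteration in your last paragraph is the missing idea, and it does close the argument. You cannot simply quote \cite{MR3327894}, though: $(u,v,h)$ does not solve \eqref{main2}, so the iteration must be rerun with localized comparisons.
\begin{enumerate}
\item Run your Step 2 with $c_1v\le c_1(a_2/c_2+\varepsilon)$ instead. For $t\ge\max\{L'/c,T\}$ with $h(T)>L'$, $u$ is a supersolution on $[0,L']$ of the logistic problem with rate $a_1-c_1(a_2/c_2+\varepsilon)>0$, Neumann data at $0$ and zero data at $L'$. This gives $\liminf u\ge\underline u_0-O(\varepsilon)$ on compacts.
\item Then alternate. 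A bound $\liminf u\ge\underline u_k$ on compacts makes $v$ a subsolution on $[0,L']$ with data $M_2$ at $L'$. The corresponding steady states tend to $(a_2-b_2\underline u_k)^+/c_2$ as $L'\to\infty$, and this bound feeds back into $u$ as in the first step.
\item Set $g(\bar v):=\frac1{c_2}\bigl(a_2-\frac{b_2}{b_1}(a_1-c_1\bar v)\bigr)$. The function $g(\bar v)-\bar v$ is affine and negative both at $\bar v=0$ (since $a_1b_2>a_2b_1$) and at $\bar v=a_2/c_2$ (since $a_1c_2>a_2c_1$). So $\bar v_k$ decreases by a fixed amount per step and reaches $0$ after finitely many steps.
\end{enumerate}
Consequently the $\varepsilon$-losses are harmless. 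The $\bar u_k,\underline v_k$ half of your scheme is unnecessary, since $\limsup u\le a_1/b_1$ and $v>0$ are already known.

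The paper takes a different route. It gets its first positive lower bound for $u$ from a single-species free boundary problem in the shifted environment $B$, via \cite{HU20205931}. It then uses monotone dynamical systems on $[0,l]$, passes to the limit $l\to\infty$ of a coupled stationary pair, and compares with the kinetic flow. Your route needs only $h_\infty=\infty$, the fact that $A=a_1$ on $[0,L']$ once $t\ge L'/c$, and scalar Dirichlet logistic problems. The price is tracking finitely many perturbed steps.
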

	
	\begin{proof}
		Thanks to the comparison principle, we have
		\[
		\limsup_{t\to+\infty}u(t,x) \leq \frac{a_1}{b_1}
		\text{ uniformly for } x \in [0,+\infty),
		\]
		\[
		\limsup_{t\to+\infty}v(t,x) \leq \frac{a_2}{c_2}
		\text{ uniformly for } x \in [0,+\infty).
		\]
		Therefore, for $\varepsilon = (\frac{a_1}{c_1} - \frac{a_2}{c_2})/2 > 0$, there
		exists $T_1 > 0$ such that $v(t,x) \leq \frac{a_2}{c_2} + \varepsilon$ for
		$t \geq T_1, x \in [0, +\infty)$. Then $u$ satisfies
		\begin{equation}\label{eq1prospread}
			\begin{cases}
				u_t - d_1u_{xx} \geq (B(x - ct) - b_1u)u,
				& t > T_1,~ 0 \leq x < h(t),\\
				u_x(t,0) = 0,~ u(t, h(t)) = 0,
				& t > T_1,\\
				h'(t) \geq -\mu(a_0)u_x(t,h(t)),
				& t > T_1,\\
				u(T_1,x) > 0,
				& 0 \leq x < h(t),
			\end{cases}
		\end{equation}
		where \rev{$B(\xi) := A(\xi) - c_1\bigl(\frac{a_2}{c_2}+\varepsilon\bigr)$.
		By the choice of $\varepsilon$ one has $a_1-c_1\frac{a_2}{c_2}=2c_1\varepsilon$, whence}
		\[
		B(\xi) \rev{{}= A(\xi) - a_1 + \varepsilon c_1}
		= \begin{cases}
			\varepsilon c_1 & \text{ if } \xi \leq 0,\\
			a_0 - a_1 + \varepsilon c_1      & \text{ if } \xi \geq l_0.
		\end{cases}
		\]
		\rev{Thus $B$ satisfies the same structural assumptions as $A$, with $a_1$
		replaced by $\varepsilon c_1>0$ and $a_0$ replaced by
		$a_0-a_1+\varepsilon c_1<0$.}
		
		Next, we prove that for any $l > \frac{\pi}{2}\sqrt{\frac{d_1}{\varepsilon c_1}}$,
		there exists $T_l > \max\{T_1, \frac{l}{c}\}$ such that
		\begin{equation}\label{eq2prospread}
			u(t,x) \geq \frac{\varepsilon c_1}{2b_1}
			\text{ for } t \geq T_l,~ 0 \leq x \leq l.
		\end{equation}
		Indeed, since $h(t) \to \infty$, there exists $T_2 > T_1$ such that
		$h(T_2) \ge l$. It follows from the comparison principle that $h(t) \ge \rev{\underline{h}_l(t)}$ for $t \in (T_2, \infty)$ and $u(t, x) \ge \rev{\underline{w}_l(t,x)}$ for $t > T_2$ and $x \in [0,\rev{\underline{h}_l(t)}]$, where $\rev{(\underline{w}_l, \underline{h}_l)}$ is the solution of
		the following free boundary problem
		\begin{equation}\label{eq3prospread}
			\rev{\begin{cases}
				(\underline{w}_l)_t - d_1(\underline{w}_l)_{xx}
				= (B(x - ct) - b_1\underline{w}_l)\underline{w}_l,
				& t > T_2,~ 0 \leq x < \underline{h}_l(t),\\
				(\underline{w}_l)_x(t,0) = 0,
				~ \underline{w}_l(t, \underline{h}_l(t)) = 0,
				& t > T_2,\\
				\underline{h}_l'(t) = -\mu(a_0)(\underline{w}_l)_x(t,\underline{h}_l(t)),
				& t > T_2,\\
				\underline{w}_l(T_2,x) = u(T_2,x),
				~ \underline{h}_l(T_2) = h(T_2),
				& 0 \leq x \le h(T_2).
			\end{cases}}
		\end{equation}
		Since $h(T_2) \ge l > \frac{\pi}{2}\sqrt{\frac{d_1}{\varepsilon c_1}}$, it
		follows from \cite[Theorems~1.2--1.5]{HU20205931}\rev{, applied to
		\eqref{eq3prospread} with initial time $T_2$ and with the shifted
		environment $\xi\mapsto B(\xi-cT_2)$, which satisfies the same structural
		assumptions,} that $\rev{\underline{h}_l(t)} \to
		\infty$ and $\rev{\underline{w}_l(t,x)} \to \frac{\varepsilon c_1}{b_1}$ as
		$t \to \infty$ uniformly in $[0,l]$, which implies that \eqref{eq2prospread} is
		true for some $T_l$. Therefore, \rev{recalling that $T_l>l/c$, so that
		$A(x-ct)=a_1$ for all $t\ge T_l$ and $0\le x\le l$,} $(u,v)$ satisfies
		\begin{equation}\label{eq4prospread}
			\begin{cases}
				u_t - d_1u_{xx} = (a_1 - b_1u - c_1v)u,
				& t > T_l,~ 0 \leq x < l,\\
				v_t - d_2v_{xx} = (a_2 - b_2u - c_2v)v,
				& t > T_l,~ 0 \leq x < l,\\
				u_x(t,0) = v_x(t, 0) = 0,
				& t > T_l,\\
				u(t,x) \geq \frac{\varepsilon c_1}{2b_1},
				~ v(t,x) \leq \frac{a_2}{c_2} + \varepsilon,
				& t \geq T_l,~ 0 \leq x \leq l.
			\end{cases}
		\end{equation}
		%
		%
		It follows from the theory of monotone dynamical systems (see, e.g., \cite[Chapter 4]{MR1319817} or
		\cite[Section 2]{MR1100011}) that
		\[
		\liminf_{t \to +\infty} u(t,x) \geq \underline{u}_l(x)
		\text{ and }
		\limsup_{t \to +\infty} v(t,x) \leq \overline{v}_l(x)
		\quad \text{ in } [0,l],
		\]
		where $(\underline{u}_l, \overline{v}_l)$ satisfies
		\begin{equation}\label{eq5prospread}
			\begin{cases}
				-d_1(\underline{u}_l)_{xx}
				= (a_1 - b_1\underline{u}_l - c_1\overline{v}_l)\underline{u}_l,
				& 0 \leq x < l,\\
				-d_2(\overline{v}_l)_{xx}
				= (a_2 - b_2\underline{u}_l - c_2\overline{v}_l)\overline{v}_l,
				& 0 \leq x < l,\\
				(\underline{u}_l)_x(0) = (\overline{v}_l)_x(0) = 0,\\
				\underline{u}_l(l) = \frac{\varepsilon c_1}{2b_1},
				~ \overline{v}_l(l) = \frac{a_2}{c_2} + \varepsilon.
			\end{cases}
		\end{equation}
		Letting $l \to \infty$, by classical elliptic regularity theory and a diagonal
		procedure, we conclude that $(\underline{u}_l,\overline{v}_l)$ converges
		uniformly on any compact set of $[0, \infty)$ to the solution
		$(\underline{u}_{\infty}, \overline{v}_{\infty})$ of
		\begin{equation}\label{eq6prospread}
			\begin{cases}
				-d_1(\underline{u}_{\infty})_{xx}
				= (a_1 - b_1\underline{u}_{\infty} - c_1\overline{v}_{\infty})
				\underline{u}_{\infty},
				& 0 \leq x < \infty,\\
				-d_2(\overline{v}_{\infty})_{xx}
				= (a_2 - b_2\underline{u}_{\infty} - c_2\overline{v}_{\infty}) \overline{v}_{\infty},
				& 0 \leq x < \infty,\\
				(\underline{u}_{\infty})_x(0) = (\overline{v}_{\infty})_x(0) = 0,\\
				\underline{u}_{\infty}(x) \geq \frac{\varepsilon c_1}{2b_1},
				~ \overline{v}_{\infty}(x) \leq \frac{a_2}{c_2} + \varepsilon.
			\end{cases}
		\end{equation}
		Next, we show that $\underline{u}_{\infty} \equiv \frac{a_1}{b_1}$ and
		$\overline{v}_{\infty} \equiv 0$. In fact, consider the ODE system
		\[
		\begin{cases}
			z_t = (a_1 - b_1z - c_1w)z, & t > 0,\\
			w_t = (a_2 - b_2z - c_2w)w, & t > 0,\\
			z(0) = \frac{\varepsilon c_1}{2b_1},~ w(0) = \frac{a_2}{c_2} + \varepsilon.
		\end{cases}
		\]
		Since $\frac{a_1}{a_2} > \max\{ \frac{b_1}{b_2}, \frac{c_1}{c_2}\}$, it is
		well-known that $(z,w) \to (\frac{a_1}{b_1},0)$ as $t \to \infty$. Hence, the
		solution $(Z(t,x), W(t,x))$ of the problem
		\[
		\begin{cases}
			Z_t - d_1Z_{xx} = (a_1 - b_1Z - c_1W)Z,
			& t > 0, ~0 \leq x \leq l,\\
			W_t - d_2W_{xx} = (a_2 - b_2Z - c_2W)W,
			& t > 0, ~0 \leq x \leq l,\\
			Z_x(t,0) = W_x(t, 0) = 0,
			& t > 0,\\
			Z(0,x) = \frac{\varepsilon c_1}{2b_1},
			~ W(0,x) = \frac{a_2}{c_2} + \varepsilon,
			& x \geq 0
		\end{cases}
		\]
		satisfies $(Z,W) \to (\frac{a_1}{b_1},0)$ uniformly in $[0, \infty)$ as
		$t \to +\infty$. It follows from the comparison principle that
		$\underline{u}_{\infty}(x) \geq Z(t,x)$ and $\overline{v}_{\infty}(x) \leq
		W(t,x)$ for $t > 0$. Therefore, $\underline{u}_{\infty} = \frac{a_1}{b_1}$ and
		$\overline{v}_{\infty} = 0$. We thus obtain $\liminf_{t \to +\infty} u(t,x)
		\geq \frac{a_1}{b_1}$ and $\limsup_{t \to +\infty}v(t,x) \leq 0$ uniformly in
		$[0,l]$\rev{; combined with the upper bounds
		$\limsup_{t\to+\infty}u\le a_1/b_1$ and $v>0$ recalled at the beginning of
		the proof, this gives the assertion}. Then, $\lim_{t\to+\infty}u(t,x) = \frac{a_1}{b_1}$ and
		$\lim_{t\to+\infty}v(t,x) = 0$ uniformly in any compact subset of $[0, \infty)$
		and this completes the proof.
	\end{proof}
	
	\section{Spreading speed of the invasive species}\label{sec_4}
	In this section we assume that \eqref{superior} holds, that spreading happens
	for $u$, i.e. $h_\infty=+\infty$, and that
	$\liminf_{x\to\infty}v_0(x)>0$. The last hypothesis is used from
	Lemma~\ref{lem:limsup} onwards, where the native population has to be bounded away
	from $0$ far ahead of the front. Our aim is the following.
	\begin{proposition}\label{prop:speed}
		Suppose that \eqref{superior} holds, $h_\infty = +\infty$ and
		\[
		\liminf_{x\to\infty} v_0(x) > 0.
		\]
		Then
		\[
		\lim_{t\to\infty} \frac{h(t)}{t} = \min\{c, c_0\}.
		\]
	\end{proposition}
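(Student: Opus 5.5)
The plan is to prove the two inequalities
\[
\limsup_{t\to\infty}\frac{h(t)}{t}\le\min\{c,c_0\}
\quad\text{and}\quad
\liminf_{t\to\infty}\frac{h(t)}{t}\ge\min\{c,c_0\}
\]
separately. The upper bound should follow from comparison with two known problems. The lower bound requires building a lower solution from a perturbed semi-wave in the region behind the shifting edge.

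\emph{Upper bound.} First, exactly as in the proof of Theorem~\ref{th:inferior}, $(u,v,h)$ is a lower solution of \eqref{main2}. Hence $h\le\tilde h$, and by Du--Wang--Zhou~\cite{MR3609207} (this is where $\liminf_{x\to\infty}v_0>0$ enters) $\tilde h(t)/t\to c_0$ when $\tilde h\to\infty$. Thus $\limsup h/t\le c_0$.

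Second, dropping $v$ (since $v>0$), $u$ is a lower solution of the single-species shifting problem of Hu et al.~\cite{HU20205931}, with the same $A$ and $\mu$. So $h\le\hat h$, and their theorem gives $\limsup\hat h/t\le c$. If one prefers to avoid that, a direct upper solution works: in the region $x-ct\ge l_0$ one has $A=a_0<0$, and one can take $\bar h(t)=ct+K$ with $\bar u$ a suitable exponentially decaying profile in $x-ct$ on $[ct+l_0,\,ct+K]$ glued to the constant $M_1$ behind. Together these give the upper bound.

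\emph{Lower bound.} Fix $s<\min\{c,c_0\}$. For small $\delta>0$, consider the semi-wave $(U_\delta,V_\delta,c_0^\delta)$ of the homogeneous competition system with $a_1$ replaced by $a_1-\delta$, $a_2$ by $a_2+\delta$, and $\mu(a_1)$ by $\mu(a_1-\delta)$. The extension of $\mu$ is used here. I will rely on (or reprove from \cite{MR3609207}) the continuity $c_0^\delta\to c_0$ as $\delta\to0$, and choose $\delta$ so that $c_0^\delta>s$.

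Then set, for $t\ge T$,
\[
\underline h(t)=st+X_0,\qquad
\underline u(t,x)=U_\delta\bigl(\underline h(t)-x\bigr),\qquad
\overline v(t,x)=V_\delta\bigl(\underline h(t)-x\bigr)+\text{(small correction)},
\]
with $\overline v\ge a_2/c_2+\varepsilon$ far ahead. Since $s<c$, after choosing $T$ large and $X_0\le (c-s)T$, we have $\underline h(t)\le ct$ for all $t\ge T$. Therefore $A(x-ct)=a_1\ge a_1-\delta$ and $\mu(A(\underline h-ct))=\mu(a_1)\ge\mu(a_1-\delta)$ on the support of $\underline u$. The differential inequalities then reduce to the semi-wave equations plus the slack coming from $s<c_0^\delta$ and from $\delta$.

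To order the initial data at time $T$, I use Proposition~\ref{prop:spreading}: on $[0,X_0]$ we have $u\ge a_1/b_1-\delta'$ and $v\le\delta'$. I also use $v\le a_2/c_2+\varepsilon$ everywhere for large $T$, from Theorem~\ref{th:bounds}. Since $U_\delta<(a_1-\delta)/b_1$, and $V_\delta$ is near $0$ behind the front and near $(a_2+\delta)/c_2$ far out, the ordering holds after adjusting $X_0$.

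At $x=0$, I use the second alternative in Theorem~\ref{th:comparison}. This works because $u(t,0)\to a_1/b_1$ and $v(t,0)\to0$, which dominate $\underline u(t,0)$ and $\overline v(t,0)$ respectively, since $\underline h(t)\to\infty$.

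The comparison then gives $h\ge\underline h$, so $\liminf h/t\ge s$. Letting $s\uparrow\min\{c,c_0\}$ finishes the proof.

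\emph{Main obstacle.} I expect the main obstacle to be the construction of $\overline v$ far ahead of the front. The ordering $v\le\overline v$ must hold on all of $[0,\infty)$, while $V_\delta$ only approaches $(a_2+\delta)/c_2$ asymptotically. This needs the $\delta$-perturbation of $a_2$, possibly a time-decaying additive correction $\delta e^{-\kappa t}$-type term to $\overline v$, and a verification that the correction is compatible with the $u$-inequality through the coupling term $-c_1\overline v\,\underline u$. I would first try to absorb the correction into the slack $c_0^\delta-s$ as in \cite{MR3609207}. Secondarily, the continuity of $c_0^\delta$ in the perturbation parameters must be justified, using the monotone dependence of the semi-wave on $(a_1,a_2,\hat\mu)$.
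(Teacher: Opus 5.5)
Your upper bound is sound, and for $\limsup h/t\le c$ it is simpler than the paper's Lemma~\ref{lem:limsup}. Since $h\le\hat h$ and $h\to\infty$, spreading occurs for the single-species problem of \cite{HU20205931}, whose speed is at most $c$. The bound by $c_0$ via \eqref{main2} and \cite{MR3609207} is exactly the paper's Cases~2--3. Your lower solution is also a genuinely different route. You take a homogeneous semi-wave with perturbed coefficients, moving at a speed $s<\min\{c,c_0\}$ and kept behind the line $x=ct$, so that $A\equiv a_1$ on its support. The paper instead uses the shifted semi-wave of Theorem~\ref{th:semi-wave} travelling at exactly $c$. Your route removes the case distinction and the need for Theorem~\ref{th:semi-wave}. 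The price is the sharper information $h(t)-ct=O(1)$ that the paper's construction gives when $c<c_0$.

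There is, however, a real gap, and it lies behind the front, not far ahead. Since $\underline u_x(t,0)<0$, you must use the second alternative of Theorem~\ref{th:comparison}, which requires $v(t,0)\le\overline v(t,0)$ for \emph{all} $t\ge T$. With $\overline v(t,x)=V_\delta(\underline h(t)-x)$, possibly plus a time-decaying term, $\overline v(t,0)$ tends to $0$ exponentially in $t$. Meanwhile $v(t,0)\to0$ at an unknown rate, so ``$v(t,0)\to0$'' gives no ordering at all. The far field, by contrast, is harmless: $V_\delta$ is monotone with limit $(a_2+\delta)/c_2>a_2/c_2\ge\limsup_{t\to\infty}\sup_x v(t,x)$.

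The missing idea is a positive floor for $\overline v$ behind the front that is still a supersolution there. This is exactly the paper's Lemma~\ref{lm:aux}: $\varphi_\delta$ is replaced by $\tilde\varphi_\delta$, which equals the constant $\tilde\varphi_\delta(x_1)>0$ far behind the front. This works because $(a_1-\delta)/b_1>a_2/b_2$ makes the $v$-reaction negative there. One then fixes $\underline h(T)$ independently of $T$ and chooses $T$ with $\sup_{[0,M]}v(t,\cdot)<\tilde\varphi_\delta(x_1)$ for $t\ge T$.

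In your setting a constant shift, $\overline v=V_\delta(\underline h(t)-x)+\epsilon$ with $\epsilon$ small, would also do. The $u$-inequality only needs $c_1\epsilon\le\delta$. Using $s<c_0^\delta$ and the monotonicity of $V_\delta$, the $v$-inequality reduces to $(\delta+c_2\epsilon)V_\delta\ge\epsilon\,(a_2-b_2\underline u-c_2V_\delta-c_2\epsilon)$. Its right side is negative where $\underline u\ge a_2/b_2$. Elsewhere it is at most $\epsilon a_2$, and there $V_\delta$ is bounded below by monotonicity. Without some such device your comparison does not close at $x=0$.
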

	
	\subsection{Semi-wave solutions}
	We recall the following result on semi-wave solutions for \eqref{main2}.
	\begin{theorem}[Theorem 1.3 and Remark 2.10 in \cite{MR3609207}]\label{th:semi-wave0} The problem
		\begin{equation}\label{semi-wave0}
			\begin{cases}
				-c \psi' - d_1 \psi'' = (a_1 - b_1\psi - c_1\varphi)\psi, \quad-\infty < x < 0,\\
				-c \varphi' - d_2 \varphi'' = (a_2 - b_2\psi - c_2\varphi)\varphi, \quad -\infty < x < +\infty,\\
				\psi(-\infty) = \frac{a_1}{b_1}, ~ \psi(x)=0 \text{ for } x\ge 0, ~ \psi'(x)<0 \text{ for } x\le 0,\\	
				\varphi(-\infty) = 0, ~ \varphi(+\infty)=\frac{a_2}{c_2}, ~ \varphi'(x)>0 \text{ for } x\in\mathbb{R}
			\end{cases}
		\end{equation}
		has a unique solution $(\Psi_c, \Phi_c) \in [C(\mathbb{R})\cap C^2((-\infty, 0])] \times C^2(\mathbb{R})$ for each $c\in[0,c^*)$ and it has no such solution for $c\ge c^*$. Furthermore, $c\mapsto \Psi_c'(0)$ is increasing on $[0,c^*)$, there exists a unique $c_0 \in (0,c^*)$ such that
		\[
		-\mu(a_1) \Psi_{c_0}'(0) = c_0,
		\]
		and $c_0$ depends continuously on the parameters of the problem.
	\end{theorem}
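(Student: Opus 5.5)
The statement is quoted from \cite{MR3609207}, so within the paper we take it as given. If I were to prove it independently, I would follow the semi-wave strategy of that work, in four steps: existence, uniqueness and monotonicity, nonexistence for $c\ge c^*$, and the crossing argument that defines $c_0$.

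\textbf{Step 1: existence for $c\in[0,c^*)$.} The first move is to turn the competitive system \eqref{semi-wave0} into a cooperative one via the substitution $w:=\frac{a_2}{c_2}-\varphi$. The pair $(\psi,w)$ then satisfies a quasi-monotone system, for which sub/super-solutions and monotone iteration are available. Next I would solve approximating boundary value problems: $\psi$ on $[-L,0]$ with $\psi(0)=0$ and $\psi(-L)=\frac{a_1}{b_1}$, and $\varphi$ on $[-L,L]$ with $\varphi(-L)=0$ and $\varphi(L)=\frac{a_2}{c_2}$. The ordered sub/super-solutions would be built from the single-species semi-wave and from constants. Uniform $C^2$ bounds and a diagonal argument as $L\to\infty$ give a solution on the half-line (for $\psi$) and on $\mathbb{R}$ (for $\varphi$).

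The main difficulty is identifying the limits at $\pm\infty$ and excluding trivial limits. I would control the behaviour at $-\infty$ with the superior condition \eqref{superior}: the kinetic system has $(a_1/b_1,0)$ as its global attractor among states with positive $\psi$, exactly as in the proof of Proposition~\ref{prop:spreading}. Monotonicity in $x$ comes from a sliding argument in the cooperative variables, and it then forces the correct limits.

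\textbf{Step 2: uniqueness and monotonicity in $c$.} Both should follow from a sliding/comparison argument. Take two semi-waves, shift one so it lies above the other, and decrease the shift to the critical value; the strong maximum principle together with Hopf's lemma at $x=0$ then gives a contradiction. The same comparison applied to speeds $c_1<c_2$ yields $\Psi_{c_1}\ge\Psi_{c_2}$ near $0$ with equal boundary value, hence $\Psi_{c_1}'(0)\le \Psi_{c_2}'(0)$. Strictness comes from Hopf's lemma, so $c\mapsto\Psi_c'(0)$ is increasing.

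\textbf{Step 3: nonexistence for $c\ge c^*$.} Here $c^*$ is the minimal speed of whole-line travelling waves of the competition system connecting $(a_1/b_1,0)$ to $(0,a_2/c_2)$. The idea is to compare a putative semi-wave of speed $c\ge c^*$ with a travelling wave of speed $c$, or a nearby one, via sliding. This gives a contradiction because the semi-wave has $\psi=0$ on $[0,\infty)$ while the wave stays positive. I would also show that $\Psi_c'(0)\to0$ as $c\to c^*$: otherwise, a limit of the semi-waves taken along a subsequence would produce a semi-wave of speed $c^*$, which Step~3 itself rules out.

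\textbf{Step 4: $c_0$ and continuity.} The map $c\mapsto -\mu(a_1)\Psi_c'(0)$ is continuous by uniqueness and compactness, and it is strictly decreasing by Step~2. At $c=0$ it is positive, and it tends to $0$ as $c\to c^*$. The line $c\mapsto c$ increases from $0$ to $c^*$, so the two curves cross exactly once, at some $c_0\in(0,c^*)$. Continuous dependence of $c_0$ on the parameters follows by the same compactness-plus-uniqueness reasoning.

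The step I expect to be the real obstacle is the identification of the asymptotic limits in Step~1, together with the sharp threshold $c^*$ in Step~3. Those are exactly the places where the detailed analysis of \cite{MR3609207} is needed.
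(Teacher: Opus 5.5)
Relying on \cite{MR3609207} is exactly what the paper does. It gives no argument for this statement and simply imports Theorem~1.3 and Remark~2.10 from there, so within the paper your first sentence is the whole proof.

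Your independent sketch, however, has a genuine gap, and it begins in Step~1, not only in Step~3 where you locate the threshold difficulty. The hypothesis $c<c^*$ is never used in your existence argument. Yet for $c\ge c^*$ no semi-wave exists, so your approximations must degenerate there, and any argument that excludes degeneration has to invoke $c<c^*$ somewhere.
\begin{itemize}
\item Suppose the lower pair is built, as you suggest, from the constant $\overline\varphi\equiv a_2/c_2$ and a single-species semi-wave. Then $\underline\psi$ must satisfy $d_1\underline\psi''+c\underline\psi'+\bigl(a_1-\frac{a_2c_1}{c_2}-b_1\underline\psi\bigr)\underline\psi\ge0$. The semi-wave of this scalar equation exists only for $c<2\sqrt{d_1(a_1-a_2c_1/c_2)}$. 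By Remark~\ref{rm:cstar} this is merely a lower bound for $c^*$, and it can be strictly smaller, because the minimal speed of competition waves need not be linearly determined.
\item If you take $\underline\psi\equiv0$ instead, nothing prevents $\psi_L\to0$ locally uniformly as $L\to\infty$. The global stability of $(a_1/b_1,0)$ for the kinetic system does not help, since it identifies the limits at $\pm\infty$ only after a nontrivial monotone limit is in hand.
\end{itemize}
The missing idea is a non-degeneracy argument that uses the minimality of $c^*$. Typically one shows that a degenerating family of approximate semi-waves would, after re-centring at a level set of $\psi$, yield a travelling wave of speed $c$. That wave would belong either to the competition system or, if the $\varphi$-component disappears, to the Fisher--KPP equation for $u$ alone. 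Neither can exist when $c<c^*\le2\sqrt{d_1a_1}$.

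The sliding arguments in Steps~2 and~3 are also not closed by the strong maximum principle and Hopf's lemma alone. Two semi-waves of the same speed, or a semi-wave and a travelling wave of the same speed, have the same linearisation at $(a_1/b_1,0)$. Hence they have the same decay rates at $-\infty$, of the form \eqref{asym-unified}, with exponents depending only on $c$ and the coefficients. As you decrease the shift, the ordering can first fail at $-\infty$, when the leading coefficients coincide, with no interior contact point where the maximum principle could act.
\begin{itemize}
\item For uniqueness, this obstruction needs a finer analysis at $-\infty$; comparison plus Hopf's lemma at $x=0$ does not resolve it.
\item For nonexistence, it can be bypassed when $c>c^*$. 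Since $\psi'<0<\varphi'$, a semi-wave of speed $c$ is a lower solution, in the competitive sense, for every smaller speed. A travelling wave of speed $c'\in[c^*,c)$ has strictly larger decay exponents at $-\infty$, so the ordering there holds for every shift. The slide can then be continued through all shifts, which forces $\Psi_c\le0$, a contradiction.
\end{itemize}
The borderline case $c=c^*$ is not covered by this and needs a separate argument. This matters because your proof that $\Psi_c'(0)\to0$ as $c\to c^*$ rests precisely on nonexistence at $c=c^*$.
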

	
	\rev{\begin{remark}\label{rm:cstar}
		The threshold $c^*$ in Theorem \textup{\ref{th:semi-wave0}} is the minimal
		speed of the travelling waves of the homogeneous competition system
		\eqref{main2}. Undoing the non-dimensionalisation used in
		\cite{MR3609207} (namely
		$\hat u=\frac{b_1}{a_1}u\bigl(t/a_2,\sqrt{d_2/a_2}\,x\bigr)$,
		$\hat v=\frac{c_2}{a_2}v\bigl(t/a_2,\sqrt{d_2/a_2}\,x\bigr)$, under which a
		speed $s$ becomes $c=\sqrt{a_2d_2}\,s$), \cite[Proposition 1.2]{MR3609207}
		gives the explicit two-sided bound
		\[
		2\sqrt{d_1\Bigl(a_1-\frac{a_2c_1}{c_2}\Bigr)}\ \le\ c^*\ \le\ 2\sqrt{d_1a_1},
		\]
		both quantities being positive under \eqref{superior}. In particular
		$0<c_0<c^*\le2\sqrt{d_1a_1}$: the speed produced by the free boundary model
		always stays strictly below the Fisher--KPP speed $2\sqrt{d_1a_1}$ of the
		Cauchy problem for $u$ alone. We stress that $2\sqrt{d_1a_1}$ is
		\emph{not} the speed the invader would have in the absence of the native
		species: the latter is the free boundary speed $c_0^{\rm sing}$ of the
		single species problem of \cite{MR2607347}, which is itself strictly
		smaller than $2\sqrt{d_1a_1}$. The comparison between $c_0$ and
		$c_0^{\rm sing}$ is discussed in Remark \textup{\ref{rm:speed}}.
	\end{remark}}

	To prove Proposition \ref{prop:speed}, we need the following result on an auxiliary elliptic system, which supplies a semi-wave to \eqref{main} with speed $c$.
	\begin{theorem}\label{th:semi-wave} Suppose that $0<c\le c_0$. Then we have the following conclusions:
		\begin{itemize}
			\item[(i)] For any $L\ge0$, the problem
			\begin{equation}\label{semi-wave}
				\begin{cases}
					-c \psi' - d_1 \psi'' = (A(x) - b_1\psi - c_1\varphi)\psi, \quad-\infty < x < L,\\
					-c \varphi' - d_2 \varphi'' = (a_2 - b_2\psi - c_2\varphi)\varphi, \qquad -\infty < x < +\infty,\\
					\psi(-\infty) = \frac{a_1}{b_1}, ~ \psi(x)=0 \text{ for } x\ge L, ~ \psi'(x)<0 \text{ for } x\le L,\\	
					\varphi(-\infty) = 0, ~ \varphi(+\infty)=\frac{a_2}{c_2}, ~ \varphi'(x)>0 \text{ for } x\in\mathbb{R}
				\end{cases}			
			\end{equation}
			has a unique solution $(\psi_L,\varphi_L) \in [C(\mathbb{R})\cap C^2((-\infty, L])] \times C^2(\mathbb{R})$.
			\item[(ii)] The mapping $L \mapsto \psi'_L(L)$ is strictly increasing on $[0, +\infty)$.
			\item[(iii)] There exists a unique $L_0 \geq 0$ such that $-\mu(A(L_0))\psi'_{L_0}(L_0) = c$. Moreover, $L_0 = 0$ if and only if $c = c_0$, and in such a case, we have $(\psi_0,\varphi_0) \equiv (\Psi_{c_0}, \Phi_{c_0})$.
		\end{itemize}
	\end{theorem}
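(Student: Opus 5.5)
The proof follows the single-species argument of Hu et al.~\cite{HU20205931}, adapted to the coupled system in the way Du, Wang and Zhou~\cite{MR3609207} treat the homogeneous problem. There are three steps: existence and uniqueness of $(\psi_L,\varphi_L)$ for each fixed $L$, strict monotonicity of $L\mapsto\psi_L'(L)$ via a sliding argument, and an intermediate value argument for $L_0$ that uses Theorem~\ref{th:semi-wave0} as the anchor at $L=0$.

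\emph{Part (i).} Since $A(x)\equiv a_1$ for $x\le 0$, the equations in \eqref{semi-wave} agree with those of \eqref{semi-wave0} on $(-\infty,0]$. The heterogeneity $A$ enters only on $(0,L)$, where $A<a_1$. I would construct $(\psi_L,\varphi_L)$ as a limit of solutions on bounded intervals $[-n,L]$, in the spirit of \cite{MR3609207}. The system is competitive, so I would pass to $(\psi,\tilde\varphi)$ with $\tilde\varphi=a_2/c_2-\varphi$, which makes it cooperative.

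For the upper solution I would take $\overline\psi=a_1/b_1$ and $\underline\varphi=0$, or better a solution of the single-species semi-wave problem with $A$ and speed $c$, which Hu et al.\ provide when $c<c_0^{\rm sing}$. For the lower solution I would translate $(\Psi_{c'},\Phi_{c'})$ with $c'\in[c,c^*)$, using that $A\ge a_0$ on a compact set and that $c\le c_0<c^*$.

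A monotone iteration then gives a solution satisfying $\psi(x)\to a_1/b_1$ and $\varphi(x)\to 0$ as $x\to-\infty$, and $\varphi(x)\to a_2/c_2$ as $x\to+\infty$. The limits at $-\infty$ follow from the superior condition \eqref{superior}, as in Proposition~\ref{prop:spreading}.

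Monotonicity ($\psi'<0<\varphi'$) and uniqueness come from a sliding method. Given two solutions, I would shift one by $s$ and decrease $s$ to the first touching point, then use the strong maximum principle for the cooperative system. Here the fact that $A$ is nonincreasing is what makes the shifted $\psi$ a supersolution. This comparison argument works because the asymptotic state $(a_1/b_1,0)$ is linearly stable.

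\emph{Part (ii).} For $L_1<L_2$, set $\hat\psi(x)=\psi_{L_2}(x+L_2-L_1)$. Because $A$ is nonincreasing, $A(x+L_2-L_1)\le A(x)$. Hence $(\hat\psi,\hat\varphi)$ is a lower solution, in the $(\psi,\tilde\varphi)$ order, of the problem satisfied by $(\psi_{L_1},\varphi_{L_1})$, and it vanishes at $L_1$. The sliding and uniqueness argument then gives $\hat\psi\le\psi_{L_1}$. Both functions vanish at $L_1$, so the Hopf lemma yields the strict inequality $\psi_{L_2}'(L_2)>\psi_{L_1}'(L_1)$, provided $A$ is strictly decreasing on a relevant set. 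It is, since $A$ is strictly decreasing on $[0,l_0]$.

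\emph{Part (iii).} Define $F(L)=-\mu(A(L))\psi_L'(L)$. For $L\ge 0$, the map $L\mapsto\mu(A(L))$ is nonincreasing and $-\psi_L'(L)$ is strictly decreasing by (ii). Hence $F$ is strictly decreasing, and it is continuous by continuous dependence of the solutions on $L$.

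At $L=0$ the solution coincides with the homogeneous semi-wave $(\Psi_c,\Phi_c)$, because $A\equiv a_1$ on $(-\infty,0]$ and by uniqueness. Therefore $F(0)=-\mu(a_1)\Psi_c'(0)$. Since $c\mapsto\Psi_c'(0)$ is increasing (Theorem~\ref{th:semi-wave0}) and $c\le c_0$, we get $F(0)\ge -\mu(a_1)\Psi_{c_0}'(0)=c_0\ge c$, with equality exactly when $c=c_0$.

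As $L\to\infty$, I need $F(L)\to 0$, or at least $F(L)<c$ eventually. For $L>l_0$, the function $\psi_L$ lives in the region where $A=a_0<0$ on $(l_0,L)$, and this forces $|\psi_L'(L)|$ to be small. I would prove this by comparing $\psi_L$ on $[l_0,L]$ with the solution of $-c\psi'-d_1\psi''=a_0\psi$, which decays exponentially from $\psi(l_0)\le a_1/b_1$.

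The intermediate value theorem then gives a unique $L_0$ with $F(L_0)=c$, and $L_0=0$ if and only if $c=c_0$.

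\emph{Main obstacle.} I expect the hardest step to be the existence part of (i), in particular building an ordered pair of upper and lower solutions that forces the correct limits at $\pm\infty$ for the coupled system. The resident species $\varphi$ lives on all of $\mathbb{R}$ and couples through the region where $A$ varies. I would first try the truncation approach of \cite{MR3609207} on $[-n,L]$ with boundary data $(a_1/b_1,0)$, then pass to the limit $n\to\infty$, and finally identify the limits at $\pm\infty$ via the ODE argument of Proposition~\ref{prop:spreading}.
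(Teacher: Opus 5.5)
\textbf{Gap: the existence part of (i).} Parts (ii) and (iii) match the paper. This covers the Hopf comparison of $\psi_{L_1}$ with the translate of $\psi_{L_2}$, the exponential barrier where $A=a_0$, and the intermediate value argument anchored at $L=0$. Your treatment of $F(0)$ is correct and does not even need strict monotonicity of $c\mapsto\Psi_c'(0)$.

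Both upper solutions you propose, $(a_1/b_1,0)$ and (single-species semi-wave, $0$), have $\underline\varphi\equiv0$. The sandwich therefore only yields $0\le\varphi\le\Phi_{c'}(\cdot-x_0)$. This is compatible with $\varphi\equiv0$, which does solve the $\varphi$-equation. Suppose that on the truncated domains you prescribe the boundary values $\underline\varphi=0$ dictated by your upper solution. Then the iteration started there never leaves $\varphi\equiv0$ and just returns a single-species semi-wave.

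Nontriviality alone would not rescue you either. On $(L,\infty)$, $\varphi$ solves $d_2\varphi''+c\varphi'+(a_2-c_2\varphi)\varphi=0$, for which $0$ is linearly stable as $x\to+\infty$. When $c\ge2\sqrt{a_2d_2}$ (not excluded by $c\le c_0$), there are positive solutions tending to $0$. So $\varphi(+\infty)=a_2/c_2$ has to be enforced by a positive lower barrier. The ODE argument of Proposition~\ref{prop:spreading} that you invoke concerns the state behind the front, $x\to-\infty$; those limits are already forced by your lower solution, and the argument says nothing about $x\to+\infty$.

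\textbf{The missing ingredient} is the right upper solution: the homogeneous semi-wave translated so that it vanishes at $L$, $(\overline\psi,\underline\varphi)=(\Psi_c(\cdot-L),\Phi_c(\cdot-L))$. It is an upper solution precisely because $A\le a_1$. Pair it with the lower solution $(\Psi_c,\Phi_c)$, i.e.\ your choice with $c'=c$ and no translation, which is exact on $(-\infty,0]$ where $A=a_1$. The two pairs are then ordered simply by the monotonicity of $\Psi_c$ and $\Phi_c$. The sandwich $\Psi_c\le\psi\le\Psi_c(\cdot-L)$, $\Phi_c(\cdot-L)\le\varphi\le\Phi_c$ gives all four limits at $\pm\infty$ at once.

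The truncated boundary data must then be taken from these barriers. The values $(a_1/b_1,0)$ at $x=-n$ lie outside the order interval, since for instance $\overline\psi(-n)<a_1/b_1$.

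\textbf{On the sliding steps.} The paper starts the sliding using precise expansions at $-\infty$ with solution-independent exponents (Step~3 of its proof). Your appeal to the linear stability of $(a_1/b_1,0)$ could replace this. It would go through a maximum principle for the cooperative linearisation on a half-line where both pairs are close to that state, using a positive vector on which the linearised kinetic matrix is negative. As written, though, it is only a slogan and would need to be carried out: both differences tend to $0$ at $-\infty$, so strict ordering for large shifts does not come for free.
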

	
	\begin{proof} 
		Let $(\Psi_c,\Phi_c)$ be the unique solution of \eqref{semi-wave0}.
		
		\textbf{(i) Existence and uniqueness for a fixed $L\ge0$.}
		
		\textit{Step 1. Construction of an ordered pair of upper/lower solutions.}
		
		Define
		\[
		\overline\psi(x)=\Psi_c(x-L),\qquad \underline\varphi(x)=\Phi_c(x-L).
		\]
		Because $A(x)\le a_1$ for all $x$, we have for $x<L$
		\[
		\begin{aligned}
			-c\overline\psi'-d_1\overline\psi'' 
			&= \bigl(a_1-b_1\overline\psi-c_1\Phi_c(x-L)\bigr)\overline\psi \\
			&= \bigl(a_1-b_1\overline\psi-c_1\underline\varphi\bigr)\overline\psi \\
			&\ge \bigl(A(x)-b_1\overline\psi-c_1\underline\varphi\bigr)\overline\psi .
		\end{aligned}
		\]
		For $\underline\varphi(x)=\Phi_c(x-L)$, the $\varphi$-equation gives
		\[
		-c\underline\varphi'-d_2\underline\varphi'' = \bigl(a_2-b_2\overline\psi-c_2\underline\varphi\bigr)\underline\varphi.
		\]
		Hence $(\overline\psi,\underline\varphi)$ is an upper solution of the problem
		\begin{equation}\label{sw1}
			\begin{cases}
				-c \psi' - d_1 \psi'' = (A(x) - b_1\psi - c_1\varphi)\psi, \quad-\infty < x < L,\\
				-c \varphi' - d_2 \varphi'' = (a_2 - b_2\psi - c_2\varphi)\varphi, \qquad -\infty < x < +\infty,\\
				\psi(x)=0 \text{ for } x\ge L.
			\end{cases}
		\end{equation}
		
		Similarly, we can easily check that $(\underline\psi,\overline\varphi) := (\Psi_c, \Phi_c)$ is a lower solution of \eqref{sw1}. Note that $\underline\psi$ has a corner at \rev{$x=0$}, making it a weak lower solution, which is admissible for the monotone iteration scheme. Clearly, by the monotonicity of $\Psi_c$ and $\Phi_c$, we have $\underline\psi\le\overline\psi$
		and $\underline\varphi\le\overline\varphi$.
		
		\textit{Step 2. Existence via monotone iteration on bounded domains.}
		For any $n\in\mathbb{N}$ with $n>L$, we consider the truncated domain $(-n,L)$ for $\psi$ and
		$(-n,n)$ for $\varphi$.  On these bounded domains we impose the boundary conditions
		\[
		\psi(-n)=\overline\psi(-n),\quad \psi(L)=0,\qquad
		\varphi(-n)=\underline\varphi(-n),\quad \varphi(n)=\overline\varphi(n).
		\]
		That is, we solve the elliptic system
		\[
		\begin{cases}
			-c \psi' - d_1 \psi'' = (A(x) - b_1\psi - c_1\varphi)\psi, \quad-n < x < L,\\
			-c \varphi' - d_2 \varphi'' = (a_2 - b_2\psi - c_2\varphi)\varphi, \qquad -n < x < n,\\
			\psi(-n)=\overline\psi(-n), ~ \psi(x)=0 \text{ for } L \le x \le n,\\
			\varphi(-n)=\underline\varphi(-n), ~ \varphi(n)=\overline\varphi(n)
		\end{cases}
		\]
		using the standard monotone iteration scheme for
		quasimonotone systems (see e.g. \cite{MR3609207}).  Because
		$(\underline\psi,\overline\varphi)$ and $(\overline\psi,\underline\varphi)$ are
		ordered lower and upper solutions, the iteration converges to a solution
		$(\psi_n,\varphi_n)$ on $[-n,L]\times[-n,n]$ satisfying
		$\underline\psi\le\psi_n\le\overline\psi$ and $\underline\varphi\le\varphi_n\le\overline\varphi$.
		Standard elliptic regularity (Schauder estimates) gives uniform $C^{2,\alpha}$ bounds
		independent of $n$.  Letting $n\to\infty$ and using a diagonal argument we obtain a
		pair $(\psi_L,\varphi_L)$ defined on $(-\infty,L]\times\mathbb{R}$ that satisfies \eqref{sw1} in the classical sense and inherits the inequalities
		\begin{equation}\label{sw2}
			\underline\psi\le\psi_L\le\overline\psi, \quad \underline\varphi\le\varphi_L\le\overline\varphi.
		\end{equation}
		
		\rev{\textit{Step 3. Asymptotic expansions of $\psi_L(x)$ and $\varphi_L(x)$ as $x \to -\infty$.}}
		\rev{From the bounds \eqref{sw2} we have $\psi_L(-\infty)=a_1/b_1$,
		$\varphi_L(-\infty)=0$ and $\varphi_L(+\infty)=a_2/c_2$. Moreover,
		$0<\psi_L<a_1/b_1$ on $(-\infty,L)$ and $0<\varphi_L<a_2/c_2$ on $\mathbb{R}$.}
		To continue, we need asymptotic expansions of \(\psi(x):=\psi_L(x)\) and \(\varphi(x):=\varphi_L(x)\) as \(x \to -\infty.\)		
		A direct computation reveals that the first-order ODE system satisfied by the quadruple $(\psi, \psi', \varphi, \varphi')$ admits a critical point at \((\frac{a_1}{b_1}, 0, 0, 0)\), which is a saddle point. Therefore, by standard stable manifold theory (see, for example, \cite[Theorem 4.1, Chapter 13]{MR69338}), 
		\[
		\frac{a_1}{b_1} - \psi(x) \to 0, ~ \varphi(x) \to 0 \text{ exponentially as } x \to -\infty. 
		\]
		The equations satisfied by \(\varphi\) and \( \hat{\psi} : = \frac{a_1}{b_1} - \psi\) may be written in the form
		\begin{equation}\label{sw3}
			\begin{cases}
				c \hat\psi ' + d_1 \hat\psi '' -a_1 \hat\psi + \frac{a_1c_1}{b_1} \varphi -[A(x) - a_1](a_1/b_1 - \hat\psi) + \varepsilon_1(x) \hat\psi= 0,\\
				-c \varphi ' - d_2 \varphi '' - (a_2 - \frac{a_1b_2}{b_1}) \varphi + \varepsilon_2(x) \varphi = 0,
			\end{cases}        
		\end{equation}
		where
		\[
		\varepsilon_1(x) = b_1 \hat\psi - c_1\varphi \to 0 \text{ exponentially as } x \to -\infty,
		\] 
		\[
		\varepsilon_2(x) = -b_2 \hat\psi(x) + c_2\varphi(x) \to 0 \text{ exponentially as } x \to -\infty,
		\]
		and
		\[
		[A(x) - a_1](a_1/b_1 - \hat\psi) \equiv 0 \text{ for } x \le 0.
		\]
		Set 
		\[
		\eta_1 := \frac{-c + \sqrt{c^2 -4d_2(a_2 - \frac{a_1b_2}{b_1})}}{2d_2},\quad \eta_2 := \frac{-c - \sqrt{c^2 -4d_2(a_2 - \frac{a_1b_2}{b_1})}}{2d_2},        
		\]
		and
		\[
		\tau(y) = -d_1y^2 - cy + a_1,~ \lambda_1 : = \frac{-c + \sqrt{c^2 + 4a_1d_1}}{2d_1}, ~ \lambda_2 : = \frac{-c - \sqrt{c^2 + 4a_1d_1}}{2d_1}.
		\]
		Define 
		\[
		q_1(x) := e^{\eta_1 x}, \quad q_2(x) : = e^{\eta_2 x}
		\]
		and
		\[
		p_1(x) := \begin{cases}
			\frac{a_1c_1}{b_1\tau(\eta_1)}e^{\eta_1 x}, &  \eta_1 \neq \lambda_1,\\
			\frac{a_1c_1}{b_1\tau'(\eta_1)}xe^{\eta_1 x}, &  \eta_1 = \lambda_1,\\
		\end{cases} \quad
		p_2(x):= \begin{cases}
			\frac{a_1c_1}{b_1\tau(\eta_2)}e^{\eta_2 x}, &  \eta_2 \neq \lambda_2,\\
			\frac{a_1c_1}{b_1\tau'(\eta_2)}xe^{\eta_2 x}, &  \eta_2 = \lambda_2,\\
		\end{cases} \quad
		p_3(x) := e^{\lambda_1 x}; \quad p_4(x) := e^{\lambda_2 x}.
		\]
		It is easy to see that 
		\[
		\mathbf{P}_1 := (p_1, q_1), ~ \mathbf{P}_2 := (p_2, q_2),~\mathbf{P}_3 := (p_3, 0),~\mathbf{P}_4 := (p_4, 0)
		\]
		are linearly independent solutions of the linear system
		\[
		\begin{cases}
			d_1 p'' + c p ' -a_1 p + \frac{a_1c_1}{b_1} q = 0,\\
			d_2 q '' + c q ' + (a_2 - \frac{a_1b_2}{b_1}) q  = 0.
		\end{cases}  
		\]            
		
		For $x \le 0$, by applying Theorem 8.1 in Chapter 3 of \cite{MR69338} (for the case $\eta_1 \neq \lambda_1$ and $\eta_2 \neq \lambda_2$) or the corresponding versions in \cite{MR658490} for the degenerate cases, we conclude that system \eqref{sw3} admits four linearly independent solutions \(\hat{\mathbf{P}}_i\) satisfying 
		\[
		\hat{\mathbf{P}}_i(x) = (1 + o(1))\mathbf{P}_i (x) \text{ as } x \to -\infty, ~ i = 1,2,3,4. 
		\]
		Since \((\hat\psi, \varphi)\) solves \eqref{sw3}, there exist constants \(\alpha_i ~ (i=1,2,3,4)\) such that 
		\[
		(\hat\psi, \varphi) = \sum_{i=1}^{4} \alpha_i \mathbf{P}_i.
		\]
		
		Thanks to \(\eta_2 <0, \lambda_2 <0\) and \(\varphi(-\infty) = 0 = \hat\psi(-\infty),\) we have \(\alpha_2 = \alpha_4 = 0.\) Since \(\hat\psi(x) >0\) and \(\varphi(x)>0,\) we deduce that \(\alpha_1 > 0\) and in the case \(\eta_1 > \lambda_1\), we have \(\alpha_3 > 0.\) \rev{Indeed, $\tau$ is a downward parabola vanishing exactly at $\lambda_2<0<\lambda_1$, so $\tau(\eta_1)<0$ whenever $\eta_1>\lambda_1$. Hence $\alpha_3=0$ would force $\hat\psi<0$ near $-\infty$, a contradiction. For the same reason $\tau(\eta_1)>0$ when $0<\eta_1<\lambda_1$, and $\tau'(\eta_1)=-(2d_1\eta_1+c)<0$ when $\eta_1=\lambda_1$.} Thus, as \(x \to -\infty,\)
		\[
		\varphi(x) = \alpha_1 e^{\eta_1x}(1 + o(1)),        
		\]
		and
		\[
		\hat\psi(x) = \begin{cases}
			\alpha_1 \frac{a_1c_1}{b_1\tau(\eta_1)} e^{\eta_1x} (1 + o(1)), & \text{if } \eta_1 < \lambda_1,\\
			\rev{-}\alpha_1\frac{a_1c_1}{b_1\tau'(\eta_1)}\rev{|x|}e^{\eta_1x} (1 + o(1)),& \text{if } \eta_1 = \lambda_1,\\
			\alpha_3e^{\lambda_1x} (1 + o(1)), & \text{if } \eta_1 > \lambda_1.
		\end{cases}
		\]        
		Therefore, there exist positive constants \(K_\varphi\) and \(K_\psi\) such that, as \(x \to -\infty,\)
		\[
		\varphi(x) = K_\varphi e^{\eta_1 x}(1 + o(1)),
		\]
		and
		\[
		\psi(x) = \begin{cases}
			\frac{a_1}{b_1} - K_\psi  e^{\eta_1x} (1 + o(1)), & \text{if } \eta_1 < \lambda_1,\\
			\frac{a_1}{b_1} - K_\psi \rev{|x|}e^{\eta_1x} (1 + o(1)),& \text{if } \eta_1 = \lambda_1,\\
			\frac{a_1}{b_1} - K_\psi e^{\lambda_1x} (1 + o(1)), & \text{if } \eta_1 > \lambda_1.
		\end{cases}
		\]

		\rev{It is convenient to record this in a unified form. Set
		\[
		(\lambda_0,m):=
		\begin{cases}
			(\eta_1,0) & \text{ if } \eta_1<\lambda_1,\\
			(\eta_1,1) & \text{ if } \eta_1=\lambda_1,\\
			(\lambda_1,0) & \text{ if } \eta_1>\lambda_1 .
		\end{cases}
		\]
		Then $\lambda_0>0$, $\eta_1>0$ and $m\in\{0,1\}$ depend only on the
		coefficients $c,d_1,d_2,a_1,a_2,b_1,b_2,c_1$ of the problem \emph{and not on
		the particular solution}, and there exist positive constants
		$K_\psi,K_\varphi$ (which do depend on the solution) such that
		\begin{equation}\label{asym-unified}
			\frac{a_1}{b_1}-\psi(x)=K_\psi|x|^{m}e^{\lambda_0x}(1+o(1)),
			\qquad
			\varphi(x)=K_\varphi e^{\eta_1x}(1+o(1))
			\qquad (x\to-\infty).
		\end{equation}
		Differentiating the corresponding relations for the solutions
		$\hat{\mathbf P}_i$ of \eqref{sw3}, the same expansions hold for
		$\psi'$ and $\varphi'$ after differentiation of the right-hand sides.}

		\rev{\textit{Step 4. A sliding lemma, and monotonicity of $\psi_L$ and $\varphi_L$.}}

		\rev{We shall use the following sliding argument three times, so we state it
		once and for all. Let $\ell\ge0$, let $(\Psi,\Phi)$ solve
		\begin{equation}\label{sw-class}
			\begin{cases}
				-c \Psi' - d_1 \Psi'' = (A(x) - b_1\Psi - c_1\Phi)\Psi, & -\infty<x<\ell,\\
				-c \Phi' - d_2 \Phi'' = (a_2 - b_2\Psi - c_2\Phi)\Phi, & x\in\mathbb{R},
			\end{cases}
		\end{equation}
		with $\Psi\equiv0$ on $[\ell,\infty)$, $0<\Psi<a_1/b_1$ on $(-\infty,\ell)$,
		$\Psi(-\infty)=a_1/b_1$, $0<\Phi<a_2/c_2$, $\Phi(-\infty)=0$ and
		$\Phi(+\infty)=a_2/c_2$; and let $(\tilde\Psi,\tilde\Phi)$ satisfy the same
		conditions with $\ell$ replaced by some $\tilde\ell\le\ell$ and with the
		first equation of \eqref{sw-class} replaced by the inequality ``$\le$''
		(that is, $(\tilde\Psi,\tilde\Phi)$ is a lower solution). Assume finally
		that both pairs satisfy \eqref{asym-unified} with the same exponents
		$\lambda_0,\eta_1$ and the same $m$. Then
		\begin{equation}\label{sw-slide}
			\tilde\Psi\le\Psi \text{ on } (-\infty,\tilde\ell\,],
			\qquad
			\tilde\Phi\ge\Phi \text{ on } \mathbb{R}.
		\end{equation}}

		\rev{\emph{Proof of \eqref{sw-slide}.} For $\xi\ge0$ put
		$\tilde\Psi^\xi:=\tilde\Psi(\cdot+\xi)$ and $\tilde\Phi^\xi:=\tilde\Phi(\cdot+\xi)$.
		Since $A$ is nonincreasing, $(\tilde\Psi^\xi,\tilde\Phi^\xi)$ is again a lower
		solution, now on $(-\infty,\tilde\ell-\xi)$.}

		\rev{\emph{(a) Behaviour at $+\infty$.} For $x\ge\ell$ we have
		$\Psi(x)=\tilde\Psi^\xi(x)=0$, so both $\Phi$ and $\tilde\Phi^\xi$ solve the
		scalar equation $-c\theta'-d_2\theta''=(a_2-c_2\theta)\theta$ there. Setting
		$W:=\tilde\Phi^\xi-\Phi$, we get $d_2W''+cW'+qW\le0$ on $(\ell,\infty)$ with
		$q:=a_2-c_2(\tilde\Phi^\xi+\Phi)\to-a_2<0$ as $x\to+\infty$. Fix
		$\epsilon\in(0,\frac{a_2}{2c_2})$ and choose $R>\ell$ so large that
		$\Phi>\frac{a_2}{c_2}-\epsilon$ and $\tilde\Phi>\frac{a_2}{c_2}-\epsilon$ on
		$[R,\infty)$; since $\xi\ge0$, this gives
		$\tilde\Phi^\xi>\frac{a_2}{c_2}-\epsilon$ on $[R,\infty)$ as well, whence
		$q<-a_2+2c_2\epsilon<0$ there, \emph{uniformly in} $\xi\ge0$. Since $W(+\infty)=0$, a negative interior minimum of
		$W$ on $[R,\infty)$ is impossible: at such a point $W''\ge0$, $W'=0$ and
		$qW>0$, contradicting $d_2W''+cW'+qW\le0$. Consequently
		\begin{equation}\label{sw-tail}
			\inf_{[R,\infty)}\bigl(\tilde\Phi^\xi-\Phi\bigr)\ \ge\ \min\bigl\{0,\ \tilde\Phi^\xi(R)-\Phi(R)\bigr\}.
		\end{equation}
		Thus it suffices to control $\tilde\Phi^\xi-\Phi$ on $(-\infty,R]$.}

		\rev{\emph{(b) Strict inequalities near $-\infty$.} By \eqref{asym-unified},
		for every $\varepsilon\in(0,1)$ there is $X_\varepsilon<0$ such that, for
		$y\le X_\varepsilon$,
		\[
		(1-\varepsilon)K_\psi|y|^{m}e^{\lambda_0y}\le\frac{a_1}{b_1}-\Psi(y)\le(1+\varepsilon)K_\psi|y|^{m}e^{\lambda_0y},
		\qquad
		(1-\varepsilon)K_\varphi e^{\eta_1y}\le\Phi(y)\le(1+\varepsilon)K_\varphi e^{\eta_1y},
		\]
		and likewise for $(\tilde\Psi,\tilde\Phi)$ with the constants
		$\tilde K_\psi,\tilde K_\varphi$. If $x\le X_\varepsilon-\xi$, then both $x$
		and $x+\xi$ lie below $X_\varepsilon$ and $|x+\xi|\ge|X_\varepsilon|$, so
		\[
		\frac{\frac{a_1}{b_1}-\tilde\Psi^{\xi}(x)}{\frac{a_1}{b_1}-\Psi(x)}
		\ \ge\ \frac{(1-\varepsilon)\tilde K_\psi}{(1+\varepsilon)K_\psi}\,
		\Bigl(\frac{|X_\varepsilon|}{|X_\varepsilon|+\xi}\Bigr)^{m}e^{\lambda_0\xi},
		\qquad
		\frac{\tilde\Phi^{\xi}(x)}{\Phi(x)}
		\ \ge\ \frac{(1-\varepsilon)\tilde K_\varphi}{(1+\varepsilon)K_\varphi}\,e^{\eta_1\xi},
		\]
		and both right-hand sides tend to $+\infty$ as $\xi\to+\infty$ (recall
		$m\in\{0,1\}$ and $\lambda_0,\eta_1>0$). Hence, for every $\xi_*>0$ one may
		choose $\varepsilon$ small and $\xi_1\ge\xi_*$ so that
		\begin{equation}\label{sw-strict}
			\tilde\Psi^\xi<\Psi \quad\text{and}\quad \tilde\Phi^\xi>\Phi
			\qquad\text{ on }(-\infty,X_\varepsilon-\xi]\ \text{ for every }\xi\ge\xi_1 .
		\end{equation}
		(When $\xi$ is restricted to a compact subinterval of $(0,\infty)$, the same
		computation shows that \eqref{sw-strict} holds there for $\varepsilon$ small
		enough, with the half-line $(-\infty,X_\varepsilon-\xi]$ replaced by a fixed
		half-line.)}

		\rev{\emph{(c) The sliding set is nonempty.} Write $X:=X_\varepsilon$ with
		$\varepsilon$ as in (b). Set
		\[
		M_0:=\max_{[X,\tilde\ell]}\tilde\Psi<\frac{a_1}{b_1},
		\qquad
		N_0:=\sup_{(-\infty,R]}\Phi<\frac{a_2}{c_2},
		\]
		and fix $Z\ge R$ with $\tilde\Phi>N_0$ on $[Z,+\infty)$ and put
		$m_0:=\min_{[X,Z]}\tilde\Phi>0$. For $\xi$ large:
		\begin{itemize}
			\item if $x\in[X-\xi,\tilde\ell-\xi]$ then $x+\xi\in[X,\tilde\ell]$, so
			$\tilde\Psi^{\xi}(x)\le M_0$, while $x\le\tilde\ell-\xi$ and
			$\Psi(-\infty)=\frac{a_1}{b_1}>M_0$ give $\Psi(x)>M_0$;
			\item if $x\in[X-\xi,Z-\xi]$ then $x+\xi\in[X,Z]$, so
			$\tilde\Phi^{\xi}(x)\ge m_0$, while
			$\Phi(x)\le(1+\varepsilon)K_\varphi e^{\eta_1(Z-\xi)}<m_0$;
			\item if $x\in[Z-\xi,R]$ then $x+\xi\ge Z$, so
			$\tilde\Phi^{\xi}(x)>N_0\ge\Phi(x)$;
			\item for $x\ge R$ the inequality follows from \eqref{sw-tail}.
		\end{itemize}
		Together with \eqref{sw-strict} this shows that the set
		\[
		S:=\bigl\{\xi\ge0:\ \tilde\Psi^\sigma\le\Psi \text{ on }(-\infty,\tilde\ell-\sigma],\
		\tilde\Phi^\sigma\ge\Phi \text{ on }\mathbb{R},\ \forall\sigma\ge\xi\bigr\}
		\]
		is nonempty.}

		\rev{\emph{(d) $\bar\xi:=\inf S=0$.} By continuity $\bar\xi\in S$. Suppose
		$\bar\xi>0$, and use the parenthetical remark in (b) with
		$\xi_*=\bar\xi/2$ to obtain a fixed $X_*<0$ such that
		\begin{equation}\label{sw-unif}
			\tilde\Psi^\sigma<\Psi \ \text{ and }\ \tilde\Phi^\sigma>\Phi
			\quad\text{ on }(-\infty,X_*]\ \text{ for every }\sigma\in[\bar\xi/2,\bar\xi].
		\end{equation}
		Set $P:=\Psi-\tilde\Psi^{\bar\xi}\ge0$ on
		$(-\infty,\tilde\ell-\bar\xi]$ and $Q:=\tilde\Phi^{\bar\xi}-\Phi\ge0$ on
		$\mathbb{R}$. Passing to the cooperative variables $(\Psi,\,a_2/c_2-\Phi)$
		and using the mean value theorem, $(P,Q)$ satisfies a linear cooperative
		system of differential inequalities with bounded coefficients. Since
		$P(\tilde\ell-\bar\xi)=\Psi(\tilde\ell-\bar\xi)>0$ and, by
		\eqref{sw-unif}, $P>0$ and $Q>0$ on $(-\infty,X_*]$, the strong maximum
		principle for cooperative systems (see \cite[Lemma~2.1]{MR3609207}) gives
		$P>0$ on $(-\infty,\tilde\ell-\bar\xi]$ and $Q>0$ on $\mathbb{R}$. The maps
		$(\sigma,x)\mapsto\Psi(x)-\tilde\Psi^{\sigma}(x)$ and
		$(\sigma,x)\mapsto\tilde\Phi^{\sigma}(x)-\Phi(x)$ are continuous and, at
		$\sigma=\bar\xi$, strictly positive on the compact sets
		$[X_*,\tilde\ell-\bar\xi]$ and $[X_*,R]$ respectively; since moreover
		$\Psi>0$ on $(-\infty,\ell)$ and $\tilde\Psi^{\sigma}(\tilde\ell-\sigma)=0$,
		a standard compactness argument provides $\delta\in(0,\bar\xi/2)$ such that
		both inequalities persist for every $\sigma\in[\bar\xi-\delta,\bar\xi]$ on
		$[X_*,\tilde\ell-\sigma]$ and $[X_*,R]$ respectively. Outside these sets they
		hold by \eqref{sw-unif} and \eqref{sw-tail}. Hence $\bar\xi-\delta\in S$, a
		contradiction. Therefore $\bar\xi=0$, which is exactly \eqref{sw-slide}.
		\hfill$\square$}

		\rev{We now apply this to the monotonicity of $\psi_L$ and $\varphi_L$.}
		Now we define
		\[
		\psi_{L,s} := \psi_L(x + s), \quad \varphi_{L,s} := \varphi_L(x + s), \text{ for } s > 0.
		\]
		Since \(A\) is nonincreasing on \(\mathbb{R}\), we have
		\[
		\begin{cases}
			-c \psi_{L,s} ' - d_1 \psi_{L,s} '' = (A(x + s) - b_1 \psi_{L,s} - c_1 \varphi_{L,s})  \psi_{L,s} \le (A(x) - b_1 \psi_{L,s}  - c_1 \varphi_{L,s})\psi_{L,s}, & x \le L-s,\\
			-c \varphi_{L,s} ' - d_2 \varphi_{L,s} '' = (a_2 - b_2 \psi_{L,s} - c_2 \varphi_{L,s})  \varphi_{L,s}, & x \in \mathbb{R}.
		\end{cases}
		\]
		\rev{Thus $(\tilde\Psi,\tilde\Phi):=(\psi_{L,s},\varphi_{L,s})$ is a lower
		solution with $\tilde\ell=L-s$, and it obeys \eqref{asym-unified} with the
		same exponents as $(\Psi,\Phi):=(\psi_L,\varphi_L)$ because the two pairs are
		translates of one another. Hence \eqref{sw-slide} applies and gives}
		\[
		\psi_{L,s}(x) \le \psi_L(x) \text{ for } x \le L -s, \text{ and }  \varphi_{L,s}(x) \ge \varphi_L(x) \text{ for } x \in \mathbb{R}.
		\]
		For \(L -s < x < L\), we have \(\psi_{L,s}(x) = 0 < \psi_L(x)\). Since $s$ is arbitrary, we deduce that $\psi_L$ is \rev{nonincreasing} on $(-\infty,L]$ and $\varphi_L$ is \rev{nondecreasing} on $\mathbb{R}$.
		Hence, $\psi'_L(x) < 0$ in $(-\infty, L]$ and $\varphi'_L(x) > 0$ for all $x \in \mathbb{R}$, \rev{by the strong maximum principle applied to the linear equations satisfied by $\psi_L'$ and $\varphi_L'$ (recall that $A$ is Lipschitz and nonincreasing, so $A'\le0$ a.e.) and} thanks to the Hopf boundary lemma.

		Consequently, $(\psi_L, \varphi_L)$ is a solution of \eqref{semi-wave}.

		\rev{\textit{Step 5: Uniqueness of the solution.}}
		Let $(\psi_1,\varphi_1)$ and $(\psi_2,\varphi_2)$ be two solutions of \eqref{semi-wave}.
		\rev{Both belong to the class described in Step~4 with $\ell=\tilde\ell=L$,
		and, by Step~3, both satisfy \eqref{asym-unified} with the same exponents
		$\lambda_0,\eta_1$ and the same $m$ (only the constants $K_\psi,K_\varphi$
		may differ). Moreover, since $A$ is nonincreasing, $(\psi_2,\varphi_2)$ is
		in particular a lower solution of \eqref{sw-class}. Applying
		\eqref{sw-slide} with $(\Psi,\Phi)=(\psi_1,\varphi_1)$ and
		$(\tilde\Psi,\tilde\Phi)=(\psi_2,\varphi_2)$ yields
		\[
		\psi_2\le\psi_1 \text{ on }(-\infty,L],\qquad \varphi_2\ge\varphi_1 \text{ on }\mathbb{R}.
		\]
		Exchanging the roles of the two pairs gives the reverse inequalities, so
		$(\psi_1,\varphi_1)=(\psi_2,\varphi_2)$. This completes the uniqueness proof.}

		\textbf{(ii) Monotonicity of $\psi'_L(L)$.}  
		Take $0\le L_1<L_2$ and set
		$\psi_2(x)=\psi_{L_2}(x+L_2-L_1)$, $\varphi_2(x)=\varphi_{L_2}(x+L_2-L_1)$.
		Since $A$ is nonincreasing, $A(x+L_2-L_1)\le A(x)$ for all $x$.  
		A direct substitution shows that $(\psi_2,\varphi_2)$ satisfies
		\[
		\begin{cases}
			-c \psi_2' - d_1 \psi_2'' = (A(x + L_2 - L_1) - b_1\psi_2 - c_1\varphi_2)\psi_2, \quad-\infty < x < L_1,\\
			-c \varphi_2' - d_2 \varphi_2'' = (a_2 - b_2\psi_2 - c_2\varphi_2)\varphi_2, \qquad -\infty < x < +\infty,\\
			\psi_2(-\infty) = \frac{a_1}{b_1}, ~ \psi_2(x)=0 \text{ for } x\ge L_1, ~ \psi_2'(x)<0 \text{ for } x\le L_1,\\	
			\varphi_2(-\infty) = 0, ~ \varphi_2(+\infty)=\frac{a_2}{c_2}, ~ \varphi_2'(x)>0 \text{ for } x\in\mathbb{R}.
		\end{cases}	
		\]
		Due to $A(x + L_2 - L_1) \leq A(x),$ \rev{the pair $(\psi_2,\varphi_2)$ is a
		lower solution of \eqref{sw-class} with $\ell=\tilde\ell=L_1$, and by Step~3
		(applied to $(\psi_{L_2},\varphi_{L_2})$ and then translated) it obeys
		\eqref{asym-unified} with the same exponents as $(\psi_{L_1},\varphi_{L_1})$.
		Hence the sliding lemma \eqref{sw-slide} of Step~4 gives}
		\[
		\psi_{L_1}(x)\ge \psi_2(x)\;\;(x\le L_1),\qquad
		\varphi_{L_1}(x)\le \varphi_2(x)\;\;(x\in\mathbb{R}).
		\]
		Because $\psi_{L_1}(L_1)=\psi_2(L_1)=0$ and $\psi_{L_1}(x)>\psi_2(x)$ for
		$x<L_1$ (strict inequality follows from the strong maximum principle\rev{,
		which applies because $\psi_{L_1}\not\equiv\psi_2$: indeed $A$ is strictly
		decreasing on $[0,l_0]$ and $L_2>L_1$, so $A(x+L_2-L_1)<A(x)$ on a nonempty
		open subset of $(-\infty,L_1)$}), the
		Hopf boundary lemma yields $\psi'_{L_1}(L_1)<\psi'_2(L_1)=\psi'_{L_2}(L_2)$.
		Thus the map $L\mapsto\psi'_L(L)$ is strictly increasing.
		
		\textbf{(iii) Existence and uniqueness of $L_0$.}  
		We first show $\displaystyle\lim_{L\to\infty}\psi'_L(L)=0$.
		\rev{Set $w_L(x):=\psi_L(x+L)$ for $x\le0$, so that $w_L(0)=0$ and
		$0<w_L\le a_1/b_1$. Since $b_1w_L+c_1\varphi_L(\cdot+L)\ge0$, we have
		\[
		-cw_L'-d_1w_L''=\bigl(A(x+L)-b_1w_L-c_1\varphi_L(x+L)\bigr)w_L\le A(x+L)\,w_L .
		\]
		Let $I_L:=(l_0-L,0)$. For $x\in I_L$ we have $x+L>l_0$, hence $A(x+L)=a_0<0$,
		so, with $\kappa:=-a_0>0$,
		\[
		d_1w_L''+cw_L'-\kappa w_L\ \ge\ 0\qquad\text{ in } I_L .
		\]
		Let $\beta:=\dfrac{-c-\sqrt{c^2+4d_1\kappa}}{2d_1}<0$, so that
		$d_1\beta^2+c\beta-\kappa=0$, and put $W(x):=\frac{a_1}{b_1}e^{\beta(x-l_0+L)}$.
		Then $d_1W''+cW'-\kappa W=0$, $W(l_0-L)=\frac{a_1}{b_1}\ge w_L(l_0-L)$ and
		$W(0)>0=w_L(0)$. Since the zeroth-order coefficient $-\kappa$ is negative,
		the maximum principle applied to $w_L-W$ on $I_L$ yields $w_L\le W$ there,
		whence
		\[
		\|w_L\|_{L^\infty([-2,0])}\ \le\ \frac{a_1}{b_1}\,e^{\beta(L-l_0-2)}\ \longrightarrow\ 0
		\qquad (L\to+\infty).
		\]
		Because the coefficients of the equation for $w_L$ are bounded uniformly in
		$L$, interior elliptic estimates up to the boundary give
		$\|w_L\|_{C^1([-1,0])}\to0$, and in particular
		$\psi'_L(L)=w_L'(0)\to0$ as $L\to+\infty$. Consequently}
		\[
		\lim_{L\to\infty}\bigl(-\mu(A(L))\psi'_L(L)\bigr)=0.
		\]

		For $L=0$, the domain for $\psi$ is $(-\infty, 0]$. Since $A(x) \equiv a_1$ for all $x \le 0$, the system \eqref{semi-wave} is identical to the semi-wave system \eqref{semi-wave0}. By the uniqueness established in Theorem \ref{th:semi-wave0}, we necessarily have $(\psi_0, \varphi_0) \equiv (\Psi_c, \Phi_c)$. 
		Consequently, $\psi_0'(0) = \Psi_c'(0)$. Applying Theorem \ref{th:semi-wave0} again, the function $c\mapsto -\mu(a_1)\Psi_c'(0)$
		is strictly decreasing on $[0,c_0]$ and satisfies $-\mu(a_1)\Psi_{c_0}'(0)=c_0$.
		Hence $-\mu(a_1)\Psi_c'(0) \ge c$ for all $c\le c_0$, with equality iff $c=c_0$. Since $A(0) = a_1$, we obtain
		\[
		-\mu(A(0))\psi_0'(0) = -\mu(a_1)\Psi_c'(0) \ge c.
		\]
		
		Now, set $f(L)=-\mu(A(L))\psi'_L(L)$ for $L \ge 0$. From (ii), $L\mapsto-\psi'_L(L)$ is positive and strictly decreasing. Since $A$ is nonincreasing and $\mu$ is increasing, $\mu(A(L))$ is positive and nonincreasing. Hence, $L\mapsto f(L)$ is strictly decreasing on $[0, +\infty)$. 
		
		\rev{The map $L\mapsto f(L)$ is continuous on $[0,+\infty)$: if $L_n\to L$,
		then by the uniform bounds \eqref{sw2} and elliptic estimates a subsequence
		of $(\psi_{L_n},\varphi_{L_n})$ converges in $C^2_{\rm loc}$ to a solution of
		\eqref{semi-wave} with parameter $L$, which by part~(i) must be
		$(\psi_L,\varphi_L)$; since the limit is independent of the subsequence, the
		whole sequence converges and $\psi'_{L_n}(L_n)\to\psi'_L(L)$, while
		$\mu(A(L_n))\to\mu(A(L))$ by the continuity of $\mu$ and $A$.} We have established that $f(0) \ge c$ and $\lim_{L\to\infty} f(L) = 0$. By the Intermediate Value Theorem, there exists a unique $L_0 \ge 0$ such that $f(L_0) = c$. Furthermore, $L_0 = 0$ if and only if $f(0) = c$, which, as shown above, happens if and only if $c = c_0$. This completes the proof of Theorem \ref{th:semi-wave}.
	\end{proof}
	
	We divide the proof of Proposition \ref{prop:speed} into three subsections according to the value of $c$.
	
	\subsection{Case 1: $c < c_0$} In this subsection, we assume $c < c_0$.
	
	\begin{lemma}\label{lem:limsup} Assume $c < c_0$. We have
		\[
		\limsup_{t \to +\infty} \frac{h(t)}{t} \le c.
		\]
	\end{lemma}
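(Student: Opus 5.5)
We will build an upper solution for \eqref{main} from the semi-wave of Theorem~\ref{th:semi-wave}, moving at exactly the shifting speed $c$. Since $c<c_0$, Theorem~\ref{th:semi-wave}(iii) gives a unique $L_0>0$ with $-\mu(A(L_0))\psi'_{L_0}(L_0)=c$. For a large shift $X>0$ we set
\[
\overline h(t):=ct+L_0+X,\qquad \overline u(t,x):=(1+\delta)\,\psi_{L_0}(x-ct-X),\qquad \underline v(t,x):=(1-\delta)\,\varphi_{L_0}(x-ct-X).
\]
We then check the inequalities of the first half of Theorem~\ref{th:comparison}. The shift $X$ in the argument of $A$ is handled by monotonicity: $A$ is nonincreasing, so $A(x-ct)\le A(x-ct-X)$ for $X\ge0$. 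Hence the difference $A(x-ct-X)-A(x-ct)\ge 0$ produces a term of the favourable sign for an upper solution of $u$. For the free boundary condition we must also compare $\mu(A(\overline h(t)-ct))=\mu(A(L_0+X))$ with $\mu(A(L_0))$. This comparison goes in the wrong direction, since $\mu(A(L_0+X))\le\mu(A(L_0))$.

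The first fallback is therefore to take $X=0$ in the argument of $A$, i.e.\ to use exactly $\overline u(t,x)=(1+\delta)\psi_{L_0}(x-ct)$ and $\overline h=ct+L_0$. Then the condition $\overline h'\ge -\mu(A(L_0))\overline u_x$ reads $c\ge(1+\delta)c$, which fails. So we instead use $\overline h(t)=c_\delta t+L_0+X$ with a slightly larger speed $c_\delta=c(1+2\delta)$. The profile becomes $\overline u=(1+\delta)\psi_{L_0}(x-\overline h(t)+L_0)$, and $A$ is evaluated at $x-ct\le x-\overline h(t)+L_0+X$. The extra speed supplies the needed slack, because $\psi'<0$ and $c_\delta>c$ make the extra transport term $-(c_\delta-c)\psi'$ positive. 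At the boundary, $\mu(A(\overline h-ct))=\mu(A(L_0+X+2c\delta t))$, which by monotonicity is $\le\mu(A(L_0))$; this inequality is again unfavourable, and we must absorb the loss of $\mu$.

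Since this is the main obstacle, we take a more robust route: compare with a problem that is independent of $A$ on the relevant region. For large $t$ the front $h(t)$ lies to the right of $ct+l_0$, otherwise we would already have $\limsup h(t)/t\le c$. There $A=a_0<0$ and $\mu=\mu(a_0)$, so $u$ is dominated by a strongly decaying equation near the free boundary. Concretely, suppose $h(t_n)\ge(c+\epsilon)t_n$ along a sequence. On the region $x\ge ct+l_0$ we have $u_t\le d_1u_{xx}+a_0u$ and $u\le M_1$ on the line $x=ct+l_0$. An exponential upper barrier $\overline w=M_1e^{\beta(x-ct-l_0)}$, with $\beta$ as in Theorem~\ref{th:semi-wave}(iii), compared on $\{ct+l_0<x<h(t)\}$, gives $-u_x(t,h(t))\le M_1|\beta|e^{\beta(h(t)-ct-l_0)}$. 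Hence
\[
h'(t)\le \mu(a_1)M_1|\beta|\,e^{-|\beta|(h(t)-ct-l_0)}.
\]
Plugging this into the ODE for $D(t)=h(t)-ct$ gives $D'\le C e^{-|\beta|D}-c$, so $D$ stays bounded and $\limsup h(t)/t\le c$. The delicate point is verifying the barrier along the moving free boundary, where $u=0\le\overline w$, and controlling the initial stretch; both follow from the bounds of Theorem~\ref{th:bounds} once the comparison starts at a time after which $h(t)>ct+l_0$. If $h(t)\le ct+l_0$ at arbitrarily large times, we restart the argument from each such time, and the ODE bound still keeps $D$ bounded.
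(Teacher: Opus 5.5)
\textbf{There is a genuine gap in the gradient estimate at the free boundary.} The comparison of $u$ with $\overline w=M_1e^{\beta(x-ct-l_0)}$ on $\{ct+l_0<x<h(t)\}$ is fine and gives $u\le\overline w$ there. It says nothing about $u_x(t,h(t))$, however. At the free boundary $u(t,h(t))=0<\overline w(t,h(t))$, so the two functions do not touch there. The bound $u\le\overline w$ is compatible with $u$ rising arbitrarily steeply in a thin layer next to $x=h(t)$. An estimate of the form $-u_x(t,h(t))\le-\overline w_x(t,h(t))$ requires a barrier that vanishes on $x=h(t)$. Consequently the inequality $h'(t)\le Ce^{-|\beta|(h(t)-ct-l_0)}$ is unproved, and so is the ODE argument for $D=h-ct$ built on it.

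What is missing is a way to turn smallness of $u$ near the front into smallness of the flux. Two possibilities:
\begin{itemize}
\item a Du--Lin type barrier $\epsilon\bigl[2M(h(t)-x)-M^2(h(t)-x)^2\bigr]$ on $[h(t)-M^{-1},h(t)]$, whose own initial ordering must be checked;
\item uniform $C^{1+\alpha}$ bounds for $u(t,\cdot)$ combined with interpolation, which give $-u_x(t,h(t))\le C\bigl(\sup_{[h(t)-1,h(t)]}u(t,\cdot)\bigr)^{\alpha/(1+\alpha)}$ whenever $h(t)-ct>l_0+1$.
\end{itemize}

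There is a second, smaller problem. The initial ordering $u(t_*,\cdot)\le\overline w(t_*,\cdot)$ does not follow from $u\le M_1$ unless $h(t_*)=ct_*+l_0$. In general the amplitude must be $M_1e^{|\beta|(h(t_*)-ct_*-l_0)}$. Your restart at crossing times handles this only when such crossings exist. With both repairs your route would work, and it would need neither $c<c_0$ nor $\liminf v_0>0$.

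\textbf{Your reason for dropping the semi-wave construction is a sign error.} That construction is exactly the paper's route. For an upper solution you need $\overline h'\ge-\mu(A(\overline h-ct))\,\overline u_x(t,\overline h)$. Since $-\psi'_{L_0}(L_0)>0$, the inequality $\mu(A(L_0+X))\le\mu(A(L_0))$ makes the right-hand side \emph{smaller} than $c$, which is the favourable direction. The paper uses precisely this: with $\overline h=ct+L_\delta+R$ it has $c=-\mu(A(L_\delta)+2\delta)\psi_\delta'(L_\delta)\ge-\mu(A(L_\delta+R))\psi_\delta'(L_\delta)$.

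The real obstacle in that route is the ordering at the starting time $T$:
\begin{itemize}
\item $\psi_{L_0}<a_1/b_1$ need not lie above $u(T,\cdot)$;
\item $\varphi_{L_0}\to a_2/c_2$ at $+\infty$ need not lie below $v(T,\cdot)$.
\end{itemize}
Your factors $(1\pm\delta)$ do not cure this. For $\overline\psi=(1+\delta)\psi_{L_0}$ and $\underline\varphi=(1-\delta)\varphi_{L_0}$, the $u$-inequality has defect $\delta\bigl(b_1\psi_{L_0}-c_1\varphi_{L_0}\bigr)\overline\psi$. This is negative near the front, where $\psi_{L_0}\to0$ while $\varphi_{L_0}>0$.

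The paper instead perturbs the coefficients, replacing $A$ by $A+2\delta$ and $a_2$ by $a_2-\delta$. It then uses $c<c_{0,\delta}$, which is where $c<c_0$ enters. Separately, Step~1 proves $v(T,x)\ge(a_2-\delta)/c_2$ for large $x$, which is where $\liminf v_0>0$ enters.

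Incidentally, the lemma also follows at once by comparison. Since $v>0$, $(u,h)$ is a lower solution of the single-species shifting problem of Hu et al. The front speed of that problem is $\min\{c,c_0^{\rm sing}\}\le c$.
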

	
	\begin{proof}
		The proof is carried out in two steps.
		
		\textit{Step 1.  A pointwise lower bound for \(v\) at large time.}  
		For any small \(\delta>0\) and any \(T_0>0\), we claim that there exist \(T>T_0\) and \(M>0\) such that
		\[
		v(T,x) \ge \frac{a_2-\delta}{c_2}\quad\text{ for all } x\ge M.
		\]
		
		The argument is standard: we choose $\sigma_0$ such that \(0<\sigma_0<\min\bigl\{\liminf_{x\to\infty}v_0(x),\; a_2/c_2\bigr\}\) and consider the auxiliary problem
		\[
		\begin{cases}
			w_t-d_2w_{xx}=(a_2-c_2w)w, & t>0,\;x>0,\\
			w(t,0)=0,\; w(0,x)=\sigma_0.
		\end{cases}
		\]
		Its solution $w$ converges locally uniformly to the positive stationary solution \(w_*(x)\) of 
		\[
		\begin{cases}
			-d_2w_*''=(a_2-c_2w_*)w_*, & x>0,\\
			w_*(0)=0.
		\end{cases}
		\]
		Moreover, $w'_*(x) > 0$ and $w_*(+\infty) = \frac{a_2}{c_2}$. Hence we can find \(M_1>0\) and \(T>T_0\) such that \(w(t,M_1)\ge (a_2-\delta)/c_2\) for all $t \geq T$.
		Applying the maximum principle to the equation satisfied by $w_x$, we deduce $w_x \ge 0$ for $t >0$ and $x>0$. It follows that
		\begin{equation}\label{limsup1}
			w(t,x) \geq w(t,M_1)\ge \frac{a_2 - \delta}{c_2} \text{ for all } t  \geq T \text{ and } x \geq M_1.
		\end{equation}
		Choose \(M_2\) with \(v_0(x)>\sigma_0\) for \(x\ge M_2\) and set \(M_3=\max\{M_2,h(T)\}\).  Then \(h(t)\le M_3\) for \(t\in[0,T]\) and hence $v$ satisfies
		\[
		v_t - d_2v_{xx} = (a_2 - c_2v)v \text{ for } 0 < t \leq T, ~ x > M_3, \quad v(0,x) > \sigma_0 \text{ for } x > M_3.	
		\]
		
		The function \(\tilde w(t,x)=w(t,x-M_3)\) satisfies \(\tilde w_t-d_2\tilde w_{xx}=(a_2-c_2\tilde w)\tilde w\) for \(0<t\le T\), \(x>M_3\), with \(\tilde w(t,M_3)=0\le v(t,M_3)\) and \(\tilde w(0,x)=\sigma_0<v(0,x)\).  The comparison principle gives
		\begin{equation}\label{limsup2}
			v(t, x) \geq \tilde{w}(t, x) = w(t, x - M_3) \quad \text{ for } 0 < t \leq T, ~ x > M_3.
		\end{equation}
		Taking \(t=T\) and \(M:=M_1+M_3\) and using \eqref{limsup1}, \eqref{limsup2} yield the desired bound.
		
		\textit{Step 2. Construction of an upper solution and conclusion.}
		For each $\delta>0$, we consider the problem
		\begin{equation}\label{limsup3}
			\begin{cases}
				-c \psi' - d_1 \psi'' = (A(x)+2\delta - b_1\psi - c_1\varphi)\psi, ~\quad-\infty < x < L_\delta,\\
				-c \varphi' - d_2 \varphi'' = (a_2 - \delta - b_2\psi - c_2\varphi)\varphi, \quad\qquad -\infty < x < +\infty,\\
				\psi(-\infty) = \frac{a_1 + 2\delta}{b_1}, ~ \psi(x)=0 \text{ for } x\ge L_\delta, ~ \psi'(x)<0 \text{ for } x\le L_\delta,\\	
				\varphi(-\infty) = 0, ~ \varphi(+\infty)=\frac{a_2 - \delta}{c_2}, ~ \varphi'(x)>0 \text{ for } x\in \mathbb{R}.
			\end{cases}
		\end{equation}
		This can be interpreted as a perturbed problem of \eqref{semi-wave}\rev{,
		obtained by replacing $A$ by $A+2\delta$ and $a_2$ by $a_2-\delta$. For
		$\delta>0$ small all the structural assumptions are preserved: $A+2\delta$
		still satisfies the hypotheses on $A$ with $a_0+2\delta<0<a_1+2\delta$,
		condition \eqref{superior} continues to hold by continuity, and $\mu$ is
		defined at $A(L_\delta)+2\delta>a_1$ thanks to the extension of $\mu$
		introduced in Section~\ref{sec_1}}. By Theorem \ref{th:semi-wave}, there exists $c_{0,\delta}>0$ such that if $c \le c_{0,\delta}$ we can find $L_\delta \ge 0$ so that \eqref{limsup3} has a solution \((\psi_\delta,\varphi_\delta)\) with
		\begin{equation}\label{limsup4}
			-\mu\bigl(A(L_\delta)+2\delta\bigr)\,\psi_\delta'(L_\delta)=c.
		\end{equation}
		Moreover, $L_\delta=0$ if and only if $c=c_{0,\delta}$.
		
		Due to the assumption $c < c_0$ and $c_{0,\delta} \to c_0$ as $\delta \to 0$ by continuity, we can fix small $\delta$ such that $c < c_{0,\delta}$. Then we choose $L_\delta \ge 0$ and obtain the solution \((\psi_\delta,\varphi_\delta)\) to problem \eqref{limsup3} with property \eqref{limsup4} as mentioned above.
		
		On the other hand, from the bounds for the original solution \((u,v,h)\) (Theorem \ref{th:bounds}) we can choose \(T_0>0\) so large that
		\[
		u(t,x)\le\frac{a_1+\delta}{b_1}\qquad\text{ for all } t\ge T_0,\;0\le x\le h(t).
		\]
		Applying Step 1 with this $T_0$ and the same $\delta$, we obtain \(T>T_0\) and \(M>0\) with \(v(T,x)\ge (a_2-\delta)/c_2\) for all \(x\ge M\).
		Because \(\psi_\delta(L_\delta)=0\) and \(\psi_\delta\) is decreasing, \rev{and
		since $\psi_\delta(-\infty)=\frac{a_1+2\delta}{b_1}>\frac{a_1+\delta}{b_1}$
		and $\varphi_\delta(-\infty)=0<\inf_{[0,M]}v(T,\cdot)$,} we can pick \(R>0\) so large that
		\[
		L_\delta+R>\max\{h(T),l_0\},\qquad
		\psi_\delta(h(T)\rev{{}-cT}-R)>\frac{a_1+\delta}{b_1},\qquad
		\varphi_\delta(M\rev{{}-cT}-R)<\inf_{0\le x\le M}v(T,x).
		\]
		
		\rev{Now we compare on the time interval $[T,+\infty)$, \emph{keeping the
		original time variable}, which avoids any relabelling of the shifting
		environment. We define, for $t\ge T$,}
		\begin{align*}
			\overline{h}(t)&=c t+L_\delta+R ,\\
			\overline{u}(t,x)&=\psi_\delta\bigl(x-c t-R\bigr) \quad\text{ for } 0\le x \le \overline{h}(t),\\
			\underline{v}(t,x)&=\varphi_\delta\bigl(x-c t-R\bigr)\quad\text{ for } x \ge 0.
		\end{align*}
		We claim that \((\overline{u},\underline{v},\overline{h})\) is an upper solution of the original system \eqref{main} \rev{on $[T,+\infty)$}. Indeed,
		\begin{align*}
			\overline{u}_t-d_1\overline{u}_{xx}
			&=-c\psi_\delta'-d_1\psi_\delta''
			=\bigl(A(x-c t-R)+2\delta-b_1\overline{u}-c_1\underline{v}\bigr)\overline{u} \ge\bigl(A(x-ct)-b_1\overline{u}-c_1\underline{v}\bigr)\overline{u},\\
			\underline{v}_t-d_2\underline{v}_{xx}
			&=-c\varphi_\delta'-d_2\varphi_\delta''
			=\bigl(a_2-\delta-b_2\overline{u}-c_2\underline{v}\bigr)\underline{v} \le\bigl(a_2-b_2\overline{u}-c_2\underline{v}\bigr)\underline{v},\\
			\overline{u}_x(t,0)&=\psi_\delta'( -c t-R)\le0,\\
			\underline{v}_x(t,0)&=\varphi_\delta'( -c t-R)\ge0,\\
			\overline{u}(t,x)&=0 \text{ for } x \ge \overline{h}(t),\\
			\overline{h}'(t)&=c=-\mu\bigl(A(L_\delta)+2\delta\bigr)\,\psi_\delta'(L_\delta)\ge -\mu(A(L_\delta+R))\,\psi_\delta'(L_\delta) = -\mu\bigl(A(\overline{h}(t)-ct)\bigr)\,\overline{u}_x(t,\overline{h}(t)),\\
			\rev{\overline{h}(T)}&\rev{{}=cT+L_\delta+R>h(T)},\\
			\rev{\overline{u}(T,x)}&\rev{{}=\psi_\delta(x-cT-R)\ge\psi_\delta(h(T)-cT-R)>\frac{a_1+\delta}{b_1}\ge u(T,x) \text{ for } 0\le x\le h(T)},\\
			\rev{\underline{v}(T,x)}&\rev{{}=\varphi_\delta(x-cT-R) \le\begin{cases}
				\varphi_\delta(M-cT-R)<\inf_{0\le y\le M}v(T,y)\le v(T,x) & \text{ if } 0\le x\le M,\\
				\varphi_\delta(+\infty)=\frac{a_2-\delta}{c_2}\le v(T,x) & \text{ if } x\ge M.
			\end{cases}}
		\end{align*}
		
		Therefore, $(\overline{u},\underline{v},\overline{h})$ is indeed an upper solution \rev{on $[T,+\infty)$. Note that the proof of Theorem \ref{th:comparison} is insensitive to the choice of the initial time.}  By the comparison principle (Theorem \ref{th:comparison}) we obtain
		\[
		\rev{h(t)\le\overline{h}(t)=ct+L_\delta+R\qquad\forall t\ge T.}
		\]
		Consequently,
		\[
		\limsup_{t\to\infty}\frac{h(t)}{t}\le c.
		\]
		
		This completes the proof of Lemma \ref{lem:limsup}.
	\end{proof}
	
	\begin{lemma}\label{lem:liminf} Assume $c < c_0$. We have
		\[
		\liminf_{t \to +\infty} \frac{h(t)}{t} \ge c.
		\]
	\end{lemma}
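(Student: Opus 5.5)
We construct a lower solution travelling at speed exactly $c$, using the semi-wave of Theorem~\ref{th:semi-wave} with slightly perturbed coefficients; this mirrors the upper-solution argument of Lemma~\ref{lem:limsup}. For small $\delta>0$ consider the analogue of \eqref{limsup3} with $A$ replaced by $A-2\delta$ and $a_2$ by $a_2+\delta$. Condition \eqref{superior} persists for small $\delta$, $A-2\delta$ keeps the structural assumptions, and $\mu$ is defined via its extension. Let $c_{0,-\delta}$ be the corresponding critical speed; by continuity $c_{0,-\delta}\to c_0$, so we fix $\delta$ with $c<c_{0,-\delta}$. Theorem~\ref{th:semi-wave} then gives $L_\delta\ge0$ and a solution $(\psi^\delta,\varphi^\delta)$ with $\psi^\delta(-\infty)=\frac{a_1-2\delta}{b_1}$, $\varphi^\delta(+\infty)=\frac{a_2+\delta}{c_2}$, and
\[
-\mu\bigl(A(L_\delta)-2\delta\bigr)(\psi^\delta)'(L_\delta)=c.
\]
For a large shift $R>0$ and a large initial time $T$ we set, for $t\ge T$,
\[
\underline h(t)=ct+L_\delta-R,\qquad \underline u(t,x)=\psi^\delta(x-ct+R),\qquad \overline v(t,x)=\varphi^\delta(x-ct+R).
\]
In this frame $A(x-ct+R)-2\delta\le A(x-ct)$, since $A$ is nonincreasing and $R\ge0$. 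The $\underline u$-inequality therefore holds, and $\overline v$ is a supersolution because $a_2+\delta>a_2$. For the free boundary,
\[
\underline h'=c=-\mu\bigl(A(L_\delta)-2\delta\bigr)(\psi^\delta)'(L_\delta)\le -\mu\bigl(A(L_\delta-R)\bigr)(\psi^\delta)'(L_\delta),
\]
since $\mu$ is nondecreasing and $A(L_\delta-R)\ge A(L_\delta)-2\delta$. The only snag is at $x=0$: $\underline u_x(t,0)=(\psi^\delta)'(-ct+R)<0$ has the wrong sign. So we use the second alternative of Theorem~\ref{th:comparison} at the boundary, namely $u(t,0)\ge\underline u(t,0)$ and $v(t,0)\le\overline v(t,0)$ for $t\ge T$.

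These boundary and initial comparisons follow from Proposition~\ref{prop:spreading}. It gives $u\to a_1/b_1$ and $v\to0$ locally uniformly, so there is $T_0$ with $u(t,x)\ge\frac{a_1-\delta}{b_1}$ and $v(t,x)\le\eta$ on $[0,K]$ for $t\ge T_0$. Here $K$ and the small $\eta$ are fixed first, and $\eta<\varphi^\delta(y)$ will be arranged for the relevant arguments $y$. Since $\psi^\delta<\frac{a_1-2\delta}{b_1}$, the condition on $\underline u$ is automatic wherever the $u$-bound holds. The $v$-condition is subtler. The quantity $\overline v(t,0)=\varphi^\delta(-ct+R)$ decays to $0$ as $t\to\infty$, so a fixed $\eta$ is not enough. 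We would therefore start the comparison at $x=0$ with $\underline u,\overline v$ restricted to $x\ge0$, and compare $v(t,0)$ with $\varphi^\delta(-ct+R)$ by an exponential-decay estimate of $v$ near the origin. Alternatively, we replace the boundary condition by the Neumann-type first alternative after modifying $\underline u$ near $x=0$. The cleanest version of the latter is to glue: on $[0,ct-R']$ replace $(\underline u,\overline v)$ by the constant pair $\bigl(\frac{a_1-2\delta}{b_1}\cdot\theta,\ \frac{a_2+\delta}{c_2}\bigr)$ truncated with min/max, which are sub/supersolutions of the kinetics. Since $v$ itself tends to $0$ locally uniformly, the decay of $v(t,0)$ can be obtained from the ODE comparison with $w'=(a_2-b_2\frac{a_1-\delta}{b_1})w<0$, which is exponential with rate $a_2-\frac{b_2a_1}{b_1}+O(\delta)$. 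By the asymptotics \eqref{asym-unified}, $\varphi^\delta$ decays like $e^{\eta_1 x}$ with $c\eta_1=a_2-\frac{a_1b_2}{b_1}$ to leading order. Comparing the two rates (with $\delta$ terms favourable) gives $v(t,0)\le\varphi^\delta(-ct+R)$ after increasing $R$. This boundary matching is the main obstacle, and we would try the exponential-rate comparison first.

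At the initial time $T$, we take $R$ large with $ct-R\ll h(T)$, so that $\underline h(T)<h(T)$. On $[0,\underline h(T)]$ we have $\underline u(T,\cdot)\le\frac{a_1-2\delta}{b_1}<u(T,\cdot)$, because $[0,\underline h(T)]$ lies inside the region where $u$ is near $a_1/b_1$; this requires choosing $K>\underline h(T)$ before $T$, which is possible since $\underline h(T)=cT+L_\delta-R$ can be kept bounded by tying $R$ to $T$. Likewise $\overline v(T,\cdot)\ge v(T,\cdot)$: on $[0,K]$ because $v$ is tiny there, and beyond $K$ because $\overline v$ is near $\frac{a_2+\delta}{c_2}$, which exceeds $\limsup v\le a_2/c_2$ once $T$ is large. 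Theorem~\ref{th:comparison}, started at time $T$, then gives
\[
h(t)\ge ct+L_\delta-R\quad\text{for all } t\ge T,
\]
hence $\liminf_{t\to\infty} h(t)/t\ge c$.
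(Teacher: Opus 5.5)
Your construction is sound up to the boundary at $x=0$: the perturbed semi-wave with $A-2\delta$ and $a_2+\delta$, the frame $x-ct+R$, the free-boundary inequality, and the switch to the second alternative of Theorem~\ref{th:comparison}. You also correctly identify the crux. You need $v(t,0)\le\varphi^\delta(-ct+R)$ for \emph{every} $t\ge T$, while the right-hand side tends to $0$. Neither of your remedies proves this.

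\textbf{The exponential-rate comparison is circular.} Proposition~\ref{prop:spreading} gives $u\ge\frac{a_1-\delta}{b_1}$ only on a fixed interval $[0,K]$, and $v$ is not spatially homogeneous, so no ODE comparison applies at $x=0$. The best comparison on $[0,K]$ uses the Neumann condition at $0$ and $v(t,K)\le\eta$. With $\sigma'=b_2\frac{a_1-\delta}{b_1}-a_2$, it gives $v(t,0)\le\eta/\cosh(\sqrt{\sigma'/d_2}\,K)+M_2e^{-\sigma'(t-T_0)}$, which does not tend to $0$. To make $v(t,0)$ decay you would need $u$ close to $a_1/b_1$ on an interval growing linearly in time. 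To match the decay of $\varphi^\delta(-ct+R)$, that interval must essentially be $[0,ct-C]$, which is the conclusion of the lemma.

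Your rate bookkeeping is also off. The exponent satisfies $d_2\eta_1^2+c\eta_1=\frac{a_1b_2}{b_1}-a_2$, so $c\eta_1$ is strictly below the kinetic rate, not equal to it to leading order. Heuristically, if the front sits at $ct+O(1)$, then $v(t,0)$ decays only like the semi-wave tail, not at the ODE rate.

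\textbf{The gluing alternative fails too.} Since $\varphi^\delta<\frac{a_2+\delta}{c_2}$, taking the minimum with that constant just returns $\varphi^\delta$. Imposing the constant on $[0,ct-R']$ instead creates a downward jump, which is not admissible in Theorem~\ref{th:comparison}. Moreover, against $\overline v=\frac{a_2+\delta}{c_2}$ the function $\psi^\delta$ is no longer a subsolution, so the lower solution collapses there.

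\textbf{The missing ingredient is the paper's Lemma~\ref{lm:aux}.} It modifies only the left tail of $\varphi_\delta$:
\begin{itemize}
\item on $[x_1,x_0]$, set $\hat\varphi=\varphi_\delta+\varphi_\delta(x_0)[e^{\lambda(x-x_0)}-1-\lambda(x-x_0)]$, with $\lambda<0$, $|\lambda|$ small, and $x_1$ the point where $\hat\varphi'$ vanishes;
\item on $(-\infty,x_1]$, set $\tilde\varphi_\delta\equiv\hat\varphi(x_1)>0$.
\end{itemize}
For $x_0$ very negative, $\tilde\varphi_\delta$ is a $C^1$, nondecreasing weak supersolution of the $v$-equation with $a_2$. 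This uses the slack $+\delta$, and the fact that $a_2-b_2\psi_\delta<0$ far to the left because $\frac{a_1-\delta}{b_1}>\frac{a_2}{b_2}$. Moreover $\|\tilde\varphi_\delta-\varphi_\delta\|_\infty\to0$, so the slack $-\delta$ in $A$ keeps $\psi_\delta$ a subsolution against $\tilde\varphi_\delta$.

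With $\overline v(t,x)=\tilde\varphi_\delta(x-c(t-T))$ one has $\overline v(t,0)\ge\tilde\varphi_\delta(x_1)>0$ uniformly in $t$. The condition at $x=0$ then reduces to $v(t,0)<\tilde\varphi_\delta(x_1)$ for $t\ge T$, which does follow from $v\to0$ locally uniformly. Your extra slack in $A-2\delta$ would accommodate this modification. Without some device that keeps $\overline v(t,0)$ bounded away from $0$, however, the comparison cannot be closed.
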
	
	
	To prove Lemma \ref{lem:liminf}, we construct a lower solution $(\underline{u}, \overline{v}, \underline{h})$ for \eqref{main}. \rev{Here $A$ is replaced by $A-\delta$ and $a_2$ by $a_2+\delta$; for $\delta>0$ small all the structural assumptions are again preserved, \eqref{superior} still holds, and $\mu$ is evaluated at $A(L_\delta)-\delta<a_0$ through the extension of $\mu$ introduced in Section~\ref{sec_1}.} By Theorem \ref{th:semi-wave}, for small $\delta>0$ with $\frac{a_1 - \delta}{b_1}>\frac{a_2}{b_2}$, there exists \(L_\delta \ge 0\) such that 
	\[
	-\mu(A(L_\delta)-\delta)\psi_\delta'(L_\delta) = c,
	\]
	where \((\psi_\delta, \varphi_\delta)\) is the unique solution of the following problem	
	\begin{equation}\label{aux}
		\begin{cases}
			-c \psi' - d_1 \psi'' = (A(x) - \delta - b_1\psi - c_1\varphi)\psi, \quad-\infty < x < L_\delta,\\
			-c \varphi' - d_2 \varphi'' = (a_2 + \delta - b_2\psi - c_2\varphi)\varphi, \qquad -\infty < x < +\infty,\\
			\psi(-\infty) = \frac{a_1 - \delta}{b_1}, ~ \psi(x)=0 \text{ for } x\ge L_\delta, ~ \psi'(x)<0 \text{ for } x\le L_\delta,\\	
			\varphi(-\infty) = 0, ~ \varphi(+\infty)=\frac{a_2 + \delta}{c_2}, ~ \varphi'(x)>0 \text{ for } x\in \mathbb{R}.
		\end{cases}
	\end{equation}
	
	\begin{lemma}\label{lm:aux}
		For any \(x_0 < L_\delta\), there exist a constant \(x_1 = x_1(x_0) < x_0\) and a function \(\tilde{\varphi}_{\delta} \in C^1(\mathbb{R})\) such that
		\begin{align*}
			\limsup_{x_0 \to -\infty}[x_0-x_1(x_0) ] < +\infty, ~ \tilde{\varphi}_\delta' \ge 0, ~ \tilde{\varphi}_\delta \ge \varphi_\delta, \qquad -\infty < x < +\infty,\\
			\tilde{\varphi}_\delta(x) = \varphi_\delta(x) \text{ for } x\ge x_0,\quad \tilde{\varphi}_\delta(x) = \tilde{\varphi}_\delta(x_1) > 0 \text{ for } x \le x_1,\\
			-c \psi_\delta' -d_1 \psi_\delta'' \le (A(x) - b_1 \psi_\delta - c_1 \tilde{\varphi}_\delta) \psi_\delta, \quad-\infty < x < L_\delta,\\
			-c\tilde{\varphi}_\delta' - d_2 \tilde{\varphi}_\delta'' \ge (a_2 - b_2\psi_\delta - c_2\tilde{\varphi}_\delta) \tilde{\varphi}_\delta, \qquad -\infty < x < +\infty.
		\end{align*}
	\end{lemma}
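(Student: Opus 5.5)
The plan is to modify $\varphi_\delta$ only on a far-left half-line $(-\infty,x_0]$, where $\varphi_\delta$ is tiny and $\psi_\delta$ is close to $(a_1-\delta)/b_1$. There the extra margins $\delta$ in \eqref{aux} absorb the error caused by enlarging $\varphi_\delta$.

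\textbf{Margins near $-\infty$.} Since $\frac{a_1-\delta}{b_1}>\frac{a_2}{b_2}$, we can fix $\varepsilon>0$ with $b_2\frac{a_1-\delta}{b_1}>a_2+2\varepsilon$. By $\psi_\delta(-\infty)=\frac{a_1-\delta}{b_1}$ and $\varphi_\delta(-\infty)=0$ there is $X<0$ such that, for $x\le X$,
\[
a_2-b_2\psi_\delta(x)\le -\varepsilon \qquad\text{and}\qquad c_1\varphi_\delta(x)\le \delta .
\]
Throughout we take $x_0\le X$; the case of larger $x_0$ reduces to this one, since the limsup condition only concerns $x_0\to-\infty$ and we may reuse the construction for $\min\{x_0,X\}$. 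Two observations then settle most of the required inequalities.

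\emph{The $\psi_\delta$-inequality.} If $\varphi_\delta\le\tilde\varphi_\delta\le\varphi_\delta(x_0)$ on $(-\infty,x_0]$ and $\tilde\varphi_\delta=\varphi_\delta$ on $[x_0,\infty)$, then $c_1(\tilde\varphi_\delta-\varphi_\delta)\le c_1\varphi_\delta(x_0)\le\delta$. Hence
\[
-c\psi_\delta'-d_1\psi_\delta''=(A-\delta-b_1\psi_\delta-c_1\varphi_\delta)\psi_\delta\le (A-b_1\psi_\delta-c_1\tilde\varphi_\delta)\psi_\delta .
\]

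\emph{The $\tilde\varphi_\delta$-inequality on $x\ge x_0$.} There it follows directly from the $\varphi_\delta$-equation in \eqref{aux}, because $a_2+\delta>a_2$.

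\textbf{Construction on $(-\infty,x_0]$.} For $x\le x_0$ any positive $\theta$ with $\theta'\ge0$ satisfies $(a_2-b_2\psi_\delta-c_2\theta)\theta\le-\varepsilon\theta$. It therefore suffices that
\[
d_2\theta''+c\theta'\le\varepsilon\theta .
\]
Let $\theta$ solve the linear ODE $d_2\theta''+c\theta'=\tfrac{\varepsilon}{2}\theta$ on $(-\infty,x_0]$ with $\theta(x_0)=\varphi_\delta(x_0)$ and $\theta'(x_0)=\varphi_\delta'(x_0)>0$. Write $\theta=\alpha e^{\mu_+(x-x_0)}+\beta e^{\mu_-(x-x_0)}$, where $\mu_-<0<\mu_+$ are the roots of $d_2\mu^2+c\mu-\varepsilon/2=0$.

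By \eqref{asym-unified} (applied to \eqref{aux}), $\varphi_\delta'(x_0)/\varphi_\delta(x_0)\to\eta_1$ as $x_0\to-\infty$, where $\eta_1>0$ is the exponent of $\varphi_\delta$ from that expansion. The aim is to arrange $\beta>0$. Then, going leftwards, $\theta'$ hits zero at a first point $x_1<x_0$, while $\theta>0$ and $\theta'>0$ on $(x_1,x_0]$. We then set
\[
\tilde\varphi_\delta:=\theta \text{ on } [x_1,x_0],\qquad \tilde\varphi_\delta:\equiv\theta(x_1) \text{ on } (-\infty,x_1].
\]
This function is $C^1$ and nondecreasing. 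On the constant piece the required inequality reads $0\ge(a_2-b_2\psi_\delta-c_2\theta(x_1))\theta(x_1)$, which holds since $a_2-b_2\psi_\delta\le-\varepsilon$ there.

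\textbf{Main obstacle.} The delicate point is to ensure that $\beta>0$ (so that $x_1$ exists) and that $x_0-x_1$ stays bounded as $x_0\to-\infty$. After normalising by $\varphi_\delta(x_0)$, the data $(1,\varphi_\delta'(x_0)/\varphi_\delta(x_0))$ converge to $(1,\eta_1)$. The quantities $\beta$ and $x_0-x_1$ depend continuously on these normalised data, so it suffices to have $\eta_1<\mu_+$.

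If $\eta_1\ge\mu_+$ does occur, I would instead replace $\varepsilon/2$ by a smaller or larger constant $\gamma\in(0,\varepsilon]$ in the ODE. Alternatively, I would add a small decreasing-exponential correction so that the leftward profile has a genuine $e^{\mu_-}$ component. The requirement $d_2\theta''+c\theta'\le\varepsilon\theta$ leaves room for this. Which of these fixes works depends on the relative size of $\eta_1$ and $\mu_+$, and this is the step I would have to check with most care.

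\textbf{Remaining checks.}

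\emph{Positivity and ordering.} On $[x_1,x_0]$, $\theta\ge\theta(x_1)>0$. On $(-\infty,x_0]$ we have $\theta\le\varphi_\delta(x_0)$, because $\theta$ is nondecreasing towards $x_0$.

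\emph{The inequality $\tilde\varphi_\delta\ge\varphi_\delta$.} On $(-\infty,x_0]$ we have $d_2\varphi_\delta''+c\varphi_\delta'\ge\varepsilon\varphi_\delta$. Then $w:=\tilde\varphi_\delta-\varphi_\delta$ satisfies $d_2w''+cw'-\varepsilon w\le0$ on $(x_1,x_0)$, with $w(x_0)=0$, $w'(x_0)=0$ and $w(-\infty)\ge0$. A maximum-principle (or shooting) argument then gives $w\ge0$. This is the second place where care is needed, because at $x_0$ we have both $w=0$ and $w'=0$, so the sign must come from the inequality itself; I would argue by contradiction at a negative minimum, including the constant piece where $\varphi_\delta\to0$.
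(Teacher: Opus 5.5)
\paragraph{There is a genuine gap.} Your construction fails at the step you flagged, and the failure is structural. Put $\sigma:=b_2\frac{a_1-\delta}{b_1}-a_2$ and $k:=b_2\psi_\delta-a_2-\delta+c_2\varphi_\delta$. Then $d_2\varphi_\delta''+c\varphi_\delta'=k\varphi_\delta$, $k(-\infty)=\sigma-\delta$, and the exponent of $\varphi_\delta$ in \eqref{aux} is the positive root of $d_2\eta_1^2+c\eta_1=\sigma-\delta$.

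\emph{The turning point $x_1$ does not exist.} Your $\varepsilon$ satisfies $\varepsilon<\sigma/2$. So every coefficient $\gamma\in(0,\varepsilon]$ you allow in $d_2\theta''+c\theta'=\gamma\theta$ satisfies $\gamma<\sigma-\delta$ as soon as $\delta\le\sigma/2$. Hence $\mu_+(\gamma)<\eta_1$, and $\beta<0$ for all sufficiently negative $x_0$. Then $\alpha>0$ and $\beta\mu_->0$, so $\theta'>0$ everywhere while $\theta\to-\infty$ to the left. There is no $x_1$, and $\theta$ even becomes negative. Neither fallback you list helps: shrinking $\gamma$ only lowers $\mu_+$.

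\emph{The ordering step fails too.} $w=\theta-\varphi_\delta$ has $w(x_0)=w'(x_0)=0$ and $d_2w''(x_0)=(\gamma-k(x_0))\varphi_\delta(x_0)<0$. So $\theta<\varphi_\delta$ immediately to the left of $x_0$. Also, $a_2-b_2\psi_\delta\le-\varepsilon$ only gives $k\ge\varepsilon-\delta$, not $k\ge\varepsilon$.

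\emph{The underlying obstruction.} Value and slope are matched at $x_0$, so $\tilde\varphi_\delta\ge\varphi_\delta$ to the left of $x_0$ forces $\tilde\varphi_\delta''(x_0^-)\ge\varphi_\delta''(x_0)$. The required inequality at $x_0$ lets $d_2\tilde\varphi_\delta''(x_0^-)$ exceed $d_2\varphi_\delta''(x_0)$ by at most $\delta\varphi_\delta(x_0)$. That is exactly the gap between $a_2+\delta$ in \eqref{aux} and $a_2$ in the lemma. You spend that gap only on $[x_0,\infty)$. The margin $a_2-b_2\psi_\delta\le-\varepsilon$ alone can never produce the extra convexity.

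\paragraph{The missing idea: spend the $\delta$-margin on $(-\infty,x_0]$.} Choose $\gamma\in(\sigma-\delta,\sigma)$. Take $x_0$ so negative that $k\le\gamma\le b_2\psi_\delta-a_2$ on $(-\infty,x_0]$ and $c_1\varphi_\delta(x_0)\le\delta$. Then:
\begin{enumerate}
\item[(a)] $\mu_+(\gamma)>\eta_1$, so $\beta>0$ for $x_0$ negative enough. Moreover $x_0-x_1=\frac{1}{\mu_+-\mu_-}\ln\frac{\alpha\mu_+}{\beta|\mu_-|}$ stays bounded as $x_0\to-\infty$.
\item[(b)] The $\tilde\varphi_\delta$-inequality on $(x_1,x_0)$ follows from $\gamma\le b_2\psi_\delta-a_2$.
\item[(c)] $w$ now satisfies $d_2w''+cw'-\gamma w=(\gamma-k)\varphi_\delta\ge0$ with $w(x_0)=w'(x_0)=0$. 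So $w\ge0$ on $[x_1,x_0]$ by variation of constants in $s=x_0-x$. The kernel is a positive multiple of $e^{r_+s}-e^{r_-s}\ge0$, where $r_\pm$ are the roots of $d_2r^2-cr-\gamma=0$. A minimum-principle argument does not work here, because $w$ is now a subsolution.
\item[(d)] On $(-\infty,x_1]$, use $\theta(x_1)\ge\varphi_\delta(x_1)\ge\varphi_\delta$.
\end{enumerate}
With this correction your ODE profile is a valid alternative to the paper's construction. The paper instead adds $\varphi_\delta(x_0)[e^{\lambda(x-x_0)}-1-\lambda(x-x_0)]$ with $\lambda<0$ and $|\lambda|$ small. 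There $\tilde\varphi_\delta\ge\varphi_\delta$ is automatic. The same $\delta$-margin absorbs the curvature term $d_2\lambda^2\varphi_\delta(x_0)e^{\lambda(x-x_0)}$ near $x_0$ in \eqref{laux3}, and $\sigma$ takes over away from $x_0$.
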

	
	\begin{proof}
		We follow an idea from the proof of \cite[Lemma 3.3]{MR3609207}.
		As in Step 4 of the proof of Theorem \ref{th:semi-wave} (i), we have
		\begin{equation}\label{laux1}
			\varphi_{\delta}(x) = C_0 e^{\lambda_0 x}(1+o(1)),\qquad
			\varphi_{\delta}'(x) = C_0\lambda_0 e^{\lambda_0 x}(1+o(1))\quad\text{as }x\to-\infty,
		\end{equation}
		for some $C_0,\lambda_0>0$.		
		Fix $\lambda<0$, with its value to be determined later, and define, for $x\le x_0$,
		\[
		\hat{\varphi}(x) := \varphi_{\delta}(x) + \varphi_{\delta}(x_0)\bigl[e^{\lambda(x-x_0)} - 1 - \lambda(x-x_0)\bigr].
		\]
		By \eqref{laux1} we have, for $x_0$ sufficiently negative and $x\le x_0$,
		\[
		\begin{aligned}
			\hat{\varphi}'(x) &= \varphi_{\delta}'(x) + \varphi_{\delta}(x_0)\bigl[\lambda e^{\lambda(x-x_0)} - \lambda\bigr] \\
			&= C_0\lambda_0 e^{\lambda_0 x}(1+o(1)) + C_0 e^{\lambda_0 x_0}(1+o(1))\bigl[\lambda e^{\lambda z} - \lambda\bigr] \\
			&= C_0 e^{\lambda_0 x_0}\bigl[\lambda_0 e^{\lambda_0 z} + \lambda(e^{\lambda z}-1)\bigr](1+o(1)),
		\end{aligned}
		\]
		where $z:=x-x_0$. Since $\lambda<0<\lambda_0$, the strictly increasing function
		$f(z):=\lambda_0 e^{\lambda_0 z} + \lambda(e^{\lambda z}-1)$ has a unique negative zero $z_0$.
		It follows that, for all $x_0$ sufficiently negative,
		\[
		\hat{\varphi}'(x_0+z_0-1) < 0.
		\]
		Hence there exists $x_1\in(x_0+z_0-1,x_0)$ such that
		\[
		\hat{\varphi}'(x)>0=\hat{\varphi}'(x_1) \quad \text{ for all } x_1 < x \le x_0.
		\]
		
		Define
		\[
		\tilde{\varphi}_{\delta}(x) := 
		\begin{cases}
			\hat{\varphi}(x_1), & x\le x_1,\\
			\hat{\varphi}(x), & x_1\le x\le x_0,\\
			\varphi_{\delta}(x), & x\ge x_0.
		\end{cases}
		\]
		Then clearly $\tilde{\varphi}_{\delta}\in C^1(\mathbb{R})$, $\tilde{\varphi}_{\delta}(x_1)>\varphi_{\delta}(x_1)>0$, and
		\[
		\lim_{x_0\to-\infty}[x_0-x_1(x_0)]\le |z_0-1| < +\infty,\quad
		\tilde{\varphi}_{\delta}'\ge 0,\quad \tilde{\varphi}_{\delta}\ge\varphi_{\delta} \text{ for all } x\in\mathbb{R}.
		\]
		Moreover, it is also easily seen that
		\[
		\lim_{x_0\to-\infty}\|\tilde{\varphi}_\delta - \varphi_{\delta}\|_{L^\infty(\mathbb{R})}=0.
		\]
		Thus for $x_0$ sufficiently negative,
		\[
		-c \psi_{\delta}' - d_1 \psi_{\delta}'' = (A(x) - \delta - b_1\psi_{\delta} - c_1\varphi_{\delta})\psi_{\delta} \le (A(x) - b_1 \psi_\delta - c_1 \tilde{\varphi}_\delta) \psi_\delta, \quad-\infty < x < L_\delta,
		\]
		
		To complete the proof, it remains to show that, for every fixed $x_0$ sufficiently negative, in the weak sense,
		\begin{equation}\label{laux2}
			-c\tilde{\varphi}_\delta' - d_2 \tilde{\varphi}_\delta'' \ge (a_2 - b_2\psi_\delta - c_2\tilde{\varphi}_\delta) \tilde{\varphi}_\delta, \qquad -\infty < x < +\infty.
		\end{equation}
		Since $\tilde{\varphi}_{\delta}$ is $C^1$, it suffices to show the above inequality for $x<x_1$, $x\in(x_1,x_0)$ and $x>x_0$ separately.
		
		For $x>x_0$, \eqref{laux2} follows directly from \eqref{aux}. Since $\psi_\delta(+\infty)=\frac{a_1 - \delta}{b_1}>\frac{a_2}{b_2}$, we have
		$a_2 - b_2\psi_\delta - c_2\tilde{\varphi}_\delta<0$ for $x<x_1$ provided that $x_0$ is sufficiently negative. Hence for every fixed $x_0$ sufficiently negative, \eqref{laux2} holds for $x<x_1$.
		
		We next consider the case $x\in(x_1,x_0)$. Denote
		\[
		\eta(x) := e^{\lambda(x-x_0)} - 1 - \lambda(x-x_0).
		\]
		Then $-c\eta' - d_2\eta'' = -d_2\lambda^2 e^{\lambda(x-x_0)} - \lambda c \left(e^{\lambda(x-x_0)} - 1\right) > -d_2\lambda^2 e^{\lambda(x-x_0)}$ for $x<x_0$, and hence, for $x\in(x_1,x_0)$,
		\[
		\begin{aligned}
			-c\tilde{\varphi}_\delta' - d_2 \tilde{\varphi}_\delta'' &= -c {\varphi}_\delta' - d_2 {\varphi}_\delta'' + \varphi_{\delta}(x_0)(-c\eta' - d_2\eta'') \\
			&= (a_2 + \delta - b_2\psi_\delta - c_2\varphi_\delta)\varphi_\delta + \varphi_{\delta}(x_0)(-c\eta' - d_2\eta'') \\
			&> (a_2 - b_2\psi_\delta - c_2\varphi_\delta)\varphi_\delta + \delta\varphi_{\delta} - \varphi_{\delta}(x_0)d_2 \lambda^2 e^{\lambda(x-x_0)}.
		\end{aligned}
		\]
		On the other hand, for such $x$,
		\begin{align*}
			(a_2 - b_2\psi_\delta - c_2\tilde{\varphi}_\delta) \tilde{\varphi}_\delta &= (a_2 - b_2\psi_\delta - c_2\tilde{\varphi}_\delta)\varphi_\delta + \varphi_{\delta}(x_0)\eta (a_2 - b_2\psi_\delta - c_2\tilde{\varphi}_\delta)\\
			&\le (a_2 - b_2\psi_\delta - c_2 {\varphi}_\delta)\varphi_\delta + \varphi_{\delta}(x_0)\eta (a_2 - b_2\psi_\delta)\\
			&= (a_2 - b_2\psi_\delta - c_2 {\varphi}_\delta)\varphi_\delta - \sigma \varphi_{\delta}(x_0)\eta + \varepsilon(x)\varphi_{\delta},
		\end{align*}
		where
		\[
		\sigma := b_2\cdot\frac{a_1 - \delta}{b_1} - a_2 > 0,\qquad
		\varepsilon(x) := \frac{\varphi_{\delta}(x_0)}{\varphi_{\delta}(x)}\eta(x)\left(a_2 - b_2\psi_\delta + b_2\frac{a_1 - \delta}{b_1}\right) \to 0
		\]
		uniformly for $x\in[x_1,x_0]$ as $x_0\to-\infty$.
		
		Therefore \eqref{laux2} will hold for $x\in(x_1,x_0)$ with sufficiently negative $x_0$, provided we can show that
		\begin{equation}\label{laux3}
			\frac{\delta}{2}\varphi_{\delta}(x) - \varphi_{\delta}(x_0) d_2 \lambda^2 e^{\lambda(x-x_0)}
			\ge -\sigma\varphi_{\delta}(x_0)\eta(x) \quad \text{ for all } x_1 \le x \le x_0.
		\end{equation}
		
		By \eqref{laux1}, for $x\in[x_1,x_0]$ and sufficiently negative $x_0$,
		\[
		\varphi_{\delta}(x) = C_0 e^{\lambda_0x}(1+o(1)),\qquad
		\varphi_{\delta}(x_0) = C_0 e^{\lambda_0x_0}(1+o(1)).
		\]
		Thus \eqref{laux3} will hold for all sufficiently negative $x_0$ if for some $\delta_0\in(0,\delta/2)$ and $\sigma_0\in(0,\sigma)$,
		\[
		\delta_0 e^{\lambda_0(x-x_0)} - d_2 \lambda^2 e^{\lambda(x-x_0)}
		+ \sigma_0\bigl[e^{\lambda(x-x_0)} - 1 - \lambda(x-x_0)\bigr] \ge 0 \quad \text{ for all } x_1 \le x \le x_0.
		\]
		It suffices to show that if $\lambda<0$ is chosen such that $|\lambda|$ is sufficiently small then  $f_{\lambda}(t)>0$ for all $t\le0$, where
		\[
		f_{\lambda}(t):=\frac{\delta_0}{d_2} e^{\lambda_0 t} - \lambda^2 e^{\lambda t} + \frac{\sigma_0}{d_2} (e^{\lambda t} - 1 - \lambda t).
		\]
		By setting $\gamma_0 = -\lambda_0 < 0$, $\gamma = -\lambda > 0$, and $z=-t$, the claim is equivalent to: For $\gamma>0$ small, $g_{\gamma}(z)>0$ for all $z\ge0$, where
		\[
		g_{\gamma}(z):=\frac{\delta_0}{d_2} e^{\gamma_0 z} - \gamma^2 e^{\gamma z} + \frac{\sigma_0}{d_2} (e^{\gamma z} - 1 - \gamma z).
		\]
		This fact is elementary and its proof can be found in the proof of \cite[Lemma 3.3]{MR3609207} (see page 284 there).		
		The proof is complete.
	\end{proof}
	
	\begin{proof}[Proof of Lemma \ref{lem:liminf}.]
		For small \(\delta > 0,\) let \((\psi_\delta, \tilde{\varphi}_\delta)\) be given by Lemma \ref{lm:aux}. Let \(V(t)\) be the unique solution of the problem 
		\[
		V' = (a_2 - c_2V)V, \quad V(0) = \|v_0\|_\infty.
		\]
		A simple comparison consideration yields 
		\[
		v(t,x) \le V(t)  \text{ for all } t > 0\text{ and } x \ge 0.
		\]
		Since \(\lim_{t \to +\infty}V(t) = a_2/c_2,\) there exists \(T_0 > 0\) such that
		\[
		v(t,x) < \frac{a_2 + \delta/2}{c_2} \text{ for all } t \ge T_0\text{ and } x \ge 0. 
		\]
		
		We fix $M>L_\delta$ such that
		\[
		\tilde{\varphi}_\delta(M)\ge\frac{a_2+\delta/2}{c_2}.
		\]
		Since $\lim_{t\to\infty} h(t) = +\infty$ and $\lim_{t\to\infty} (u(t,x), v(t,x)) = \left(\frac{a_1}{b_1},0\right)$
		uniformly in any compact subset of $[0, \infty)$ (see \rev{Proposition \ref{prop:spreading}}), we can choose $T>T_0$ such that
		\[
		h(T)>L_\delta,
		\quad \inf_{0\le x\le M}u(t,x) > \frac{a_1- \delta}{b_1},
		\quad \sup_{0\le x\le M}v(t,x)<\tilde{\varphi}_\delta(x_1) \quad\text{ for all } t\ge T.
		\]
		
		\rev{Now we compare on $[T,+\infty)$, keeping the original time variable. We define, for $t\ge T$,}
		\begin{align*}
			\underline{h}(t)&=c \rev{(t-T)}+L_\delta ,\\
			\underline{u}(t,x)&=\psi_\delta\bigl(x-c \rev{(t-T)}\bigr) \quad\text{ for } 0\le x \le \underline{h}(t),\\
			\overline{v}(t,x)&=\tilde{\varphi}_\delta\bigl(x-c \rev{(t-T)}\bigr)\quad\text{ for } x \ge 0.
		\end{align*}
		We claim that \((\underline{u},\overline{v},\underline{h})\) is a lower solution of the original system \eqref{main} \rev{on $[T,+\infty)$}. Indeed, \rev{writing $\zeta:=x-c(t-T)\ge x-ct$ and using that $A$ is nonincreasing, so that $A(\zeta)\le A(x-ct)$,}
		\begin{align*}
			\underline{u}_t-d_1\underline{u}_{xx}
			&=-c\psi_\delta'-d_1\psi_\delta''
			\rev{{}=\bigl(A(\zeta)-\delta-b_1\underline{u}-c_1\varphi_\delta(\zeta)\bigr)\underline{u}}
			\le\bigl(A(x-ct)-b_1\underline{u}-c_1\overline{v}\bigr)\underline{u},\\
			\overline{v}_t-d_2\overline{v}_{xx}
			&=-c\tilde{\varphi}_\delta'-d_2\tilde{\varphi}_\delta''
			\ge\bigl(a_2-b_2\underline{u}-c_2\overline{v}\bigr)\overline{v},\\
			\underline{u}(t,0)&=\psi_\delta\bigl(\rev{-c(t-T)}\bigr) < \frac{a_1 - \delta}{b_1} < u(\rev{t},0),\\
			\overline{v}(t,0)&=\tilde{\varphi}_\delta\bigl(\rev{-c(t-T)}\bigr)\ge\tilde{\varphi}_\delta(x_1) > v(\rev{t},0),\\
			\underline{u}(t,x)&=0 \text{ for } x \ge \underline{h}(t),\\
			\underline{h}'(t)&=c=-\mu\bigl(A(L_\delta)-\delta\bigr)\,\psi_\delta'(L_\delta) \le -\mu\bigl(\rev{A(L_\delta-cT)}\bigr)\,\psi_\delta'(L_\delta) = -\mu\bigl(A(\underline{h}(t)-ct)\bigr)\,\underline{u}_x(t,\underline{h}(t)),\\
			\rev{\underline{h}(T)}&\rev{{}=L_\delta<h(T)},\\
			\rev{\underline{u}(T,x)}&\rev{{}=\psi_\delta(x) < \frac{a_1 - \delta}{b_1} \le u(T,x) \text{ for } 0\le x\le \underline{h}(T)},\\
			\rev{\overline{v}(T,x)}&\rev{{}=\tilde{\varphi}_\delta(x) \ge \begin{cases}
				\tilde{\varphi}_\delta(x_1)>\sup_{0\le y\le M}v(T,y)\ge v(T,x) & \text{ if } 0\le x\le M,\\
				\tilde{\varphi}_\delta(M)\ge\frac{a_2+\delta/2}{c_2}\ge v(T,x) & \text{ if } x> M.
			\end{cases}}
		\end{align*}
		\rev{Here we used $A(L_\delta-cT)\ge A(L_\delta)\ge A(L_\delta)-\delta$ and
		$\psi_\delta'(L_\delta)<0$ in the fourth line, and the fact that the
		\emph{second} alternative at $x=0$ in Theorem~\ref{th:comparison} is the one
		that applies here, since $\underline u_x(t,0)=\psi_\delta'(-c(t-T))<0$.}
		
		Applying the comparison principle (Theorem \ref{th:comparison}), we deduce that
		\[
		\rev{\underline{h}(t) \le h(t) \quad\text{ for all } t \ge T.}
		\]
		Consequently,
		\[
		\liminf_{t \to +\infty}{\frac{h(t)}{t}} \ge c.
		\]		
	\end{proof}	
	
	\begin{proof}[Proof of Proposition \ref{prop:speed} for $c < c_0$]
		The conclusion of Proposition \ref{prop:speed} for $c < c_0$ follows directly from Lemmas \ref{lem:limsup} and \ref{lem:liminf}.
	\end{proof}	
	
	\subsection{Case 2: $c = c_0$}
	
	\begin{proof}[Proof of Proposition \ref{prop:speed} for $c = c_0$]	
		Since $(u,v,h)$ is a solution to \eqref{main}, it is a lower-solution to \eqref{main2}.
		Let $(\tilde{u},\tilde{v},\tilde{h})$ be the unique solution of \eqref{main2}, then the comparison principle yields that
		\[
		u(t,x) \le \tilde{u}(t,x), ~ v(t,x) \ge \tilde{v}(t,x) \quad \text{ for } (t,x) \in [0,+\infty) \times [0,h(t)]
		\]
		and
		\[
		h(t) \le \tilde{h}(t) \quad \text{ for } t\ge0.
		\]
		
		Since $h_\infty=+\infty$, we have $\lim_{t\to+\infty} \tilde{h}(t) = +\infty$\rev{, i.e. spreading occurs for \eqref{main2}}. By \cite[Theorem 1.1]{MR3609207}, we derive
		\begin{equation}\label{upperbound}
			\limsup_{t\to+\infty} \frac{h(t)}{t} \le \lim_{t\to+\infty} \frac{\tilde{h}(t)}{t} = c_0.
		\end{equation}
		
		On the other hand, set $c_\delta := c_0 - \delta$ for a sufficiently small $\delta>0$, and let $(u_\delta,v_\delta,h_\delta)$ be the unique solution of		
		\begin{equation}\label{main3}
			\begin{cases}
				u_t = d_1u_{xx} + (A(x - c_\delta t) - b_1u - c_1 v) u, & t > 0,~ 0 < x < h(t), \\
				v_t = d_2v_{xx} + (a_2 - b_2u - c_2v)v,          & t > 0,~ 0 < x < +\infty, \\
				u_x(t, 0) = v_x(t, 0) = 0, ~ u(t, x) = 0,        & t > 0, ~ h(t) \leq x < +\infty, \\
				h'(t) = -\mu (A(h(t) - c_\delta t))u_x(t, h(t)),          & t > 0, \\
				h(0)=h_0,~ u(0, x) = u_0(x),                       & 0 \leq x \leq h_0, \\
				v(0, x) = v_0(x),                                  & 0 \le x < +\infty.
			\end{cases}
		\end{equation}
		Since $A(x-ct) \ge A(x-c_\delta t)$ and $\mu(A(h(t)-ct)) \ge \mu(A(h(t)-c_\delta t))$, we have that $(u,v,h)$ is an upper-solution to \eqref{main3}. By the comparison principle, we deduce that
		\[
		u(t,x) \ge u_\delta(t,x), ~ v(t,x) \le v_\delta(t,x) \quad \text{ for } (t,x) \in [0,+\infty) \times [0,h_\delta(t)]
		\]
		and
		\[
		h(t) \ge h_\delta(t) \quad\text{ for } t\ge0.
		\]
		
		For $M=\max\left\{\frac{\pi}{2}\sqrt{\frac{d_1}{a_1-a_2c_1/c_2}}, h_0\right\}$, we take $T>0$ such that $h(T) = M+1$. By the continuity of the solution on parameters, we can choose $\delta$ small such that $h_\delta(T) > M$. \rev{This implies that spreading occurs for \eqref{main3}: if it did not, Proposition~\ref{prop:vanishing}, applied to \eqref{main3}, would give $\lim_{t\to\infty}h_\delta(t)\le\frac{\pi}{2}\sqrt{\frac{d_1}{a_1-a_2c_1/c_2}}\le M<h_\delta(T)$, which is absurd because $h_\delta$ is increasing.} Applying Lemma \ref{lem:liminf} to problem \eqref{main3} yields
		\[
		\liminf_{t \to +\infty} \frac{h(t)}{t} \ge \liminf_{t \to +\infty} \frac{h_\delta(t)}{t} \ge c_\delta.
		\]
		Letting $\delta\to 0$, we derive
		\begin{equation}\label{lowerbound}
			\liminf_{t\to+\infty} \frac{h(t)}{t} \ge c_0.
		\end{equation}
		Combining \eqref{upperbound} and \eqref{lowerbound}, we get the desired result.
	\end{proof}
	
	\subsection{Case 3: $c > c_0$}
	
	\begin{proof}[Proof of Proposition \ref{prop:speed} for $c > c_0$]
		Using the same arguments as in Case 2, we have
		\[
		\limsup_{t\to+\infty} \frac{h(t)}{t} \le c_0.
		\]
		
		Thanks to $c_0<c$, there exists $T_0>0$ such that
		\[
		h(t) - ct < 0 \quad \text{ for } t> T_0.
		\]
		Hence for $t>T_0$,
		\[
		A(x-ct) = a_1 \text{ for } x \in [0,h(t)]
		\quad\text{ and }\quad
		\mu(A(h(t)-ct)) = \mu(a_1).
		\]
		
		This implies that $(t,x) \mapsto (u(t+T_0, x), v(t+T_0, x), h(t+T_0))$ solves \eqref{main2} for $t > 0$, with the initial data $u_0(\cdot)$, $v_0(\cdot)$ and $h_0$ replaced by $u(T_0,\cdot)$, $v(T_0,\cdot)$ and $h(T_0)$, respectively. Using \rev{\cite[Theorem 1.1]{MR3609207}, spreading being in force for that problem since $h_\infty=+\infty$}, we obtain
		\[
		\lim_{t\to+\infty} \frac{h(t)}{t} = c_0.
		\]
		This finishes the proof.
	\end{proof}
	
	\subsection{Proof of Theorem \ref{th:superior}}
	Theorem \ref{th:superior} follows directly from Propositions \ref{prop:vanishing}, \ref{prop:spreading}, and \ref{prop:speed}.
	
	\section{Numerical simulations}\label{sec_5}
	
	In this section, we perform simulations to illustrate our theoretical results obtained in Theorems \ref{th:inferior} and \ref{th:superior}. The finite difference method combined with the Newton-Raphson iteration is employed in the numerical experiments.
	
	In all figures below, the parameter values are set to $d_1 = 0.02$, $d_2 = 0.012$, $a_0 = -0.2$, and $l_0 = 1$. The initial population densities are given by $v_0(x) = 0.2 + 0.1 \cos\left(\frac{\pi x}{2}\right)$ and $u_0(x) = 0.3 \cos\left(\frac{\pi x}{2h_0}\right)$, where $h_0$ denotes the initial boundary size.
	
	Let us consider model \eqref{main} with $\mu(x) = 0.1 + 0.2 x$, and 
	\[
	A(\xi) = \begin{cases}
		a_1, & \text{if } \xi \le 0, \\
		a_1 + (a_0 - a_1)\left[ 3\left(\frac{\xi}{l_0} \right)^2 - 2\left(\frac{\xi}{l_0} \right)^3 \right], & \text{if } 0 < \xi < l_0, \\
		a_0, & \text{if } \xi \ge l_0.
	\end{cases}
	\]
	
	We first examine the case where $u$ is an inferior competitor corresponding to the condition $\frac{a_1}{a_2} < \min \{b_1/b_2, c_1/ c_2\}$. The selected parameter values are $a_1 =0.1, b_1 = 0.2, c_1 = 0.18, a_2 = 0.12, b_2 = 0.1, c_2 = 0.14$, with an initial boundary $h_0 = 1.5$ and an environmental shifting speed $c = 0.08$. Under these settings, the simulation indicates that $u$ vanishes in the long run (Figure \ref{fig:u_inf_vanishing}) while the native species $v$ establishes the long-term steady-state $a_2/c_2 \approx 0.86$ (Figure \ref{fig:u_inf_v}).
	
	\begin{figure}[htbp]
		\centering
		\begin{subfigure}{0.45\textwidth}
			\centering
			\includegraphics[width=\linewidth]{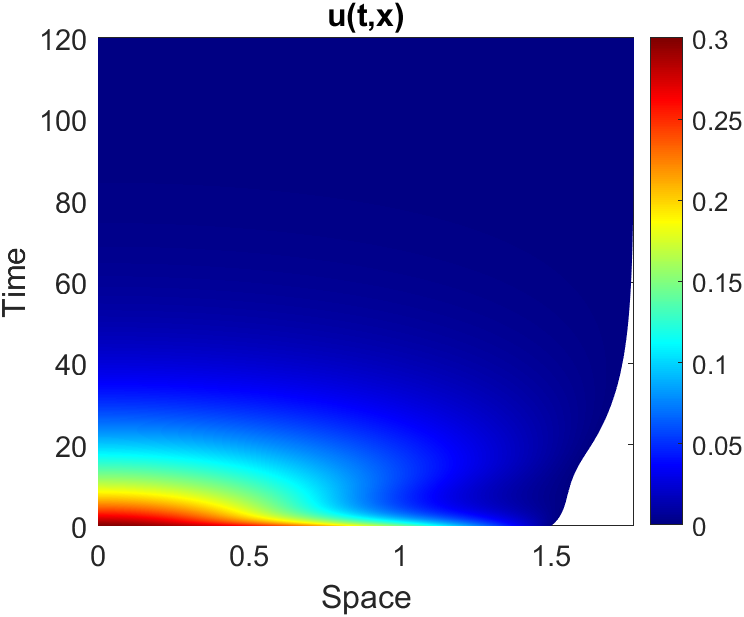}
			\caption{Heatmap of $u(t,x)$}
		\end{subfigure}
		\hfill 
		\begin{subfigure}{0.45\textwidth}
			\centering
			\includegraphics[width=\linewidth]{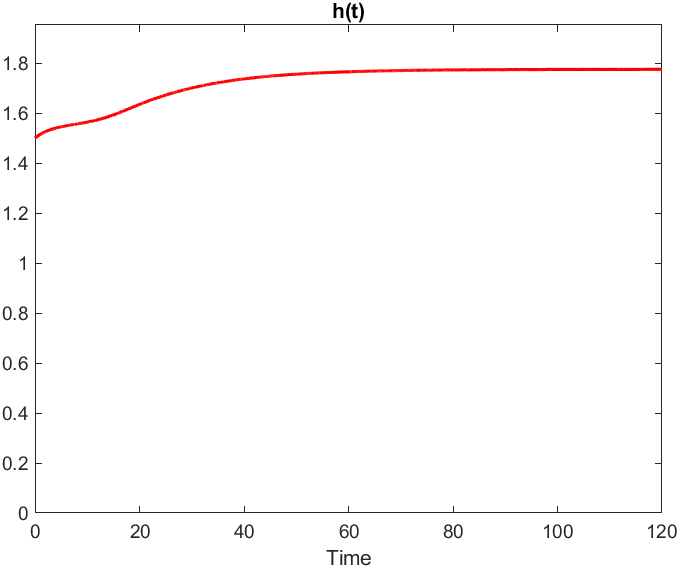}
			\caption{Free boundary $h(t)$}
		\end{subfigure}
		\caption{Vanishing of the inferior competitor species $u$.}
		\label{fig:u_inf_vanishing}
	\end{figure}

	\begin{figure}[htbp]
		\centering
		\includegraphics[scale = 0.55]{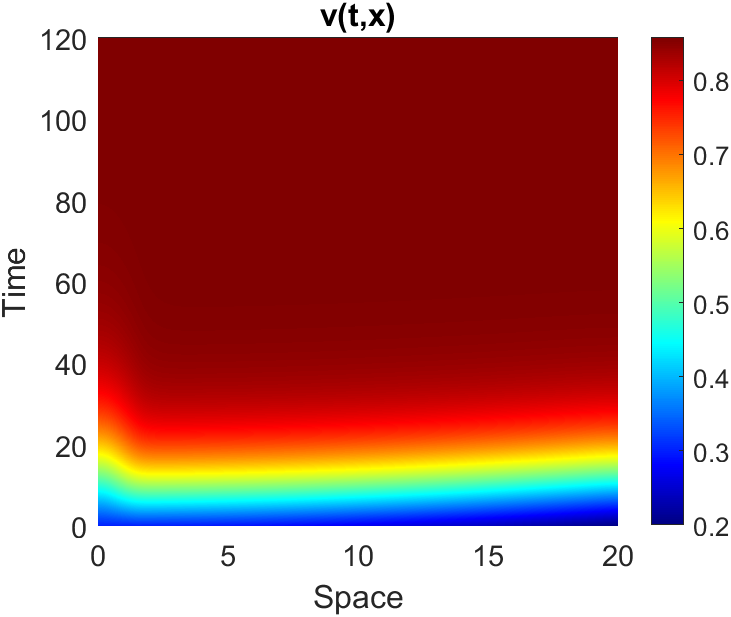}
		\caption{Heatmap of $v(t,x)$ when the inferior competitor $u$ vanishes.}
		\label{fig:u_inf_v}
	\end{figure}
	
	Next, we examine the case where $u$ acts as a superior competitor. Specifically, the parameter values are chosen as $a_1 = 0.3, b_1 = 0.14, c_1 = 0.26, a_2 = 0.1, b_2 = 0.1, c_2 = 0.1$, ensuring that condition $\frac{a_1}{a_2} > \max \{b_1/b_2, c_1/ c_2\}$ is satisfied. Additionally, the environmental shifting speed is fixed at $c = 0.01$ for the two scenarios below.
	
	First, $h_0$ is taken as $0.2$. The numerical simulations indicate that $u$ vanishes in the long run and $h_\infty \approx 0.78 \le \frac{\pi}{2} \sqrt{\frac{d_1}{a_1 - a_2c_1/c_2}}$ \rev{$\approx1.11$} (see Figure \ref{fig:u_sup_vanishing}). Furthermore, the native species $v$ quickly approaches the long-term steady state $\frac{a_2}{c_2} = 1.$
	
	\begin{figure}[htbp]
		\centering
		\begin{subfigure}{0.45\textwidth}
			\centering
			\includegraphics[width=\linewidth]{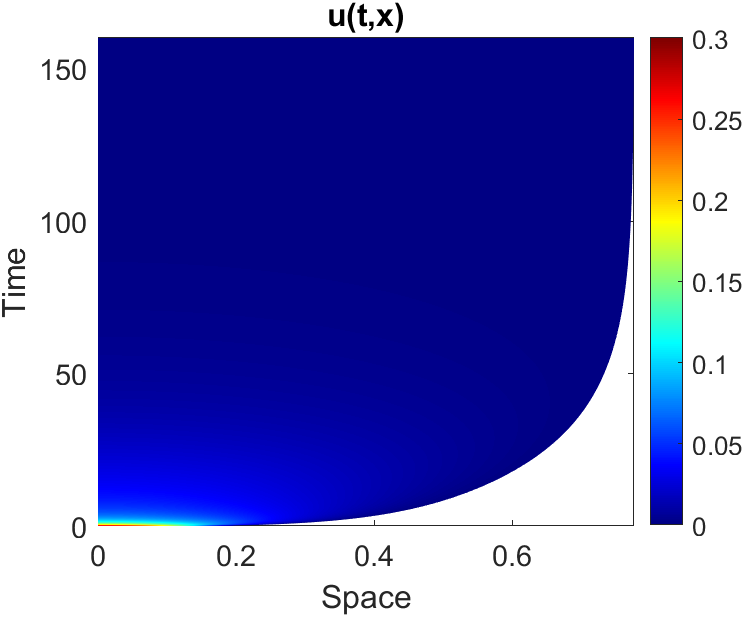}
			\caption{Heatmap of $u(t,x)$}
		\end{subfigure}
		\hfill 
		\begin{subfigure}{0.45\textwidth}
			\centering
			\includegraphics[width=\linewidth]{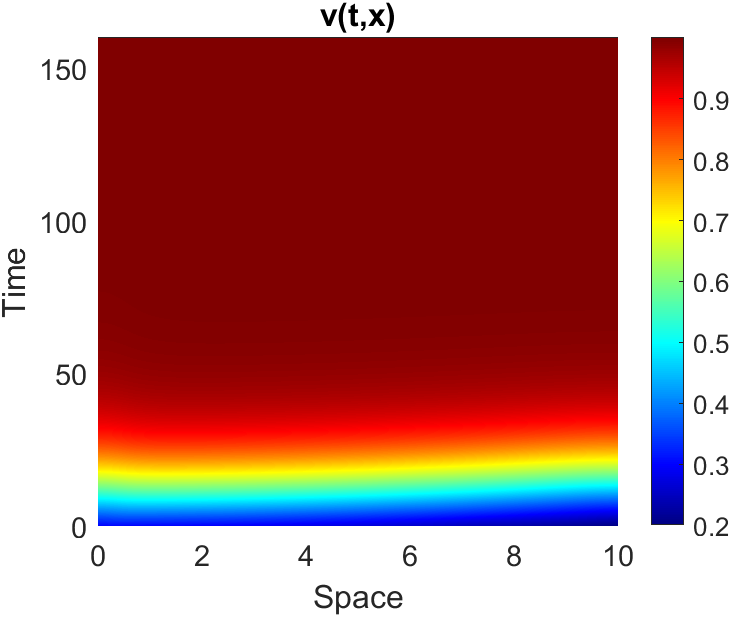}
			\caption{Heatmap of $v(t,x)$}
		\end{subfigure}
		\caption{Vanishing of the superior competitor  $u$ with $h_0$ = 0.2.}
		\label{fig:u_sup_vanishing}
	\end{figure}
	
	Second, we choose $h_0 = 1$ and obtain the case in which the invasive species $u$ establishes the steady state $\frac{a_1}{b_1} \approx 2.14$ while the native species $v$ goes to extinction in the long run. See Figure \ref{fig:u_sup_spreading} for details. \rev{Note that here $h_0=1$ is still smaller than the threshold $\frac{\pi}{2}\sqrt{\frac{d_1}{a_1-a_2c_1/c_2}}\approx1.11$ appearing in Theorem \ref{th:superior}(i); the simulation therefore also illustrates that this necessary condition for vanishing is not sufficient.}
	
	\begin{figure}[htbp]
		\centering
		\begin{subfigure}{0.45\textwidth}
			\centering
			\includegraphics[width=\linewidth]{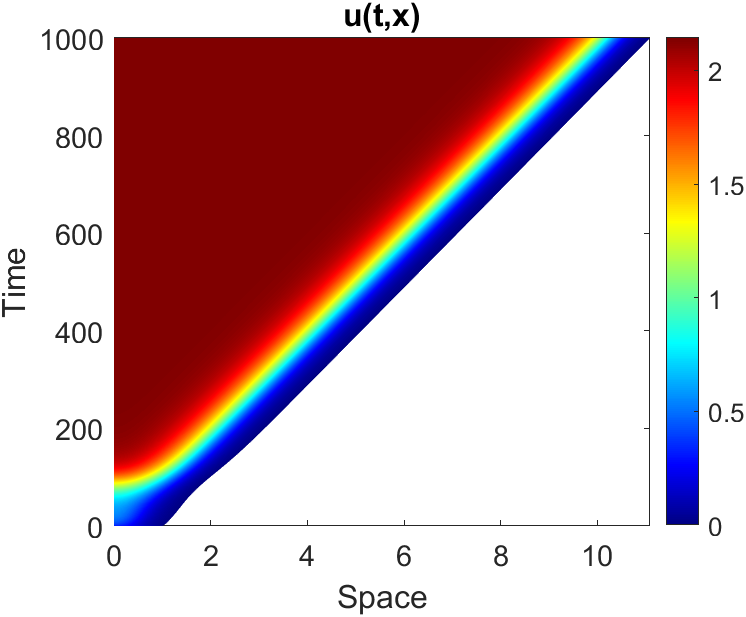}
			\caption{Heatmap of $u(t,x)$}
		\end{subfigure}
		\hfill 
		\begin{subfigure}{0.45\textwidth}
			\centering
			\includegraphics[width=\linewidth]{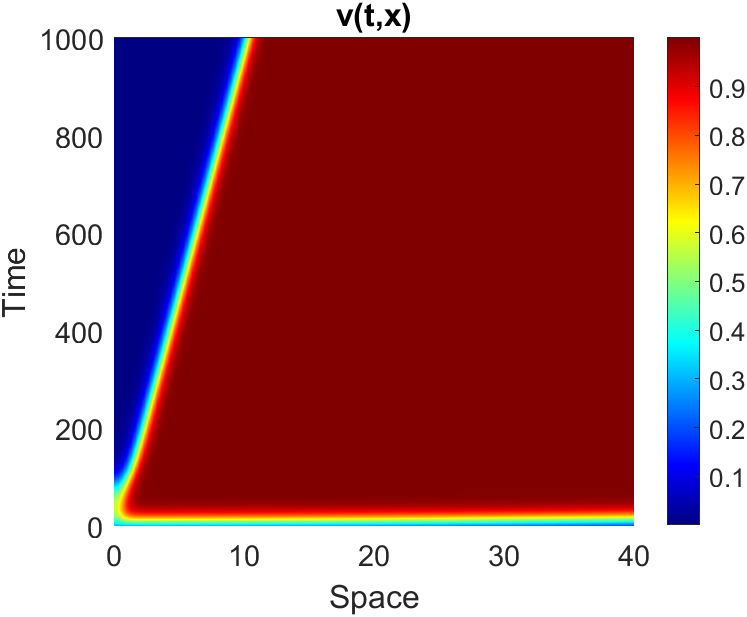}
			\caption{Heatmap of $v(t,x)$}
		\end{subfigure}
		\caption{Spreading of the superior competitor $u$ and extinction of the native species $v$ with $h_0 = 1$.}
		\label{fig:u_sup_spreading}
	\end{figure}
	
	When spreading occurs, we find that $h_\infty = +\infty$ and the asymptotic spreading speed $\frac{h(t)}{t}$ approaches $c = 0.01$. \rev{The convergence in this regime is rather slow: the computed ratio still equals $0.0111$ at $t\approx1000$ and is decreasing, in agreement with the fact that, when $c<c_0$, the free boundary satisfies $h(t)=ct+O(1)$, so that $h(t)/t-c$ decays only like $1/t$.}
	Interestingly, if the shifting speed is increased (for instance, $c = 0.06$), numerical simulations indicate that $\frac{h(t)}{t} \to \rev{0.041}$ for sufficiently large $t$. The asymptotic ratio $\frac{h(t)}{t}$ evaluated at selected values of $c$ is shown in Table \ref{tab:speed} and Figure \ref{fig:hovert_diff_c}. \rev{The values stabilise at $\min\{c,c_0\}$ with $c_0\approx0.041$: they follow $c$ closely for $c\le0.04$ and saturate at $\approx0.041$ for $c\ge0.05$, independently of $c$. This confirms $\frac{h(t)}{t} \to \min\{c, c_0\}$ as $t\to+\infty$, as predicted by Theorem \ref{th:superior}(ii).}
	
	\begin{table}[htbp]
		\centering
		\begin{tabular}{c @{\hskip 1.5cm} c}
			$c$ & $h(t)/t$  when  $t \approx 1000$ \\
			\hline\hline
			0.01 & 0.0111\\
			0.02 & 0.0208\\
			0.03 & 0.0306\\
			0.04 & 0.0400\\
			0.05 & 0.0408 \\
			0.06 & 0.0410 \\
			0.07 & 0.0411\\
			0.08 & 0.0412
		\end{tabular}
		\caption{The asymptotic spreading speed $\frac{h(t)}{t}$ with selected values of $c$\rev{. The values are consistent with $\min\{c,c_0\}$ and $c_0\approx0.041$; the small excess for $c\le0.04$ is due to the slow, $O(1/t)$, convergence in that regime}.}
		\label{tab:speed}
	\end{table}
	
	\begin{figure}[htbp]
		\centering	
		\includegraphics[scale = 0.8]{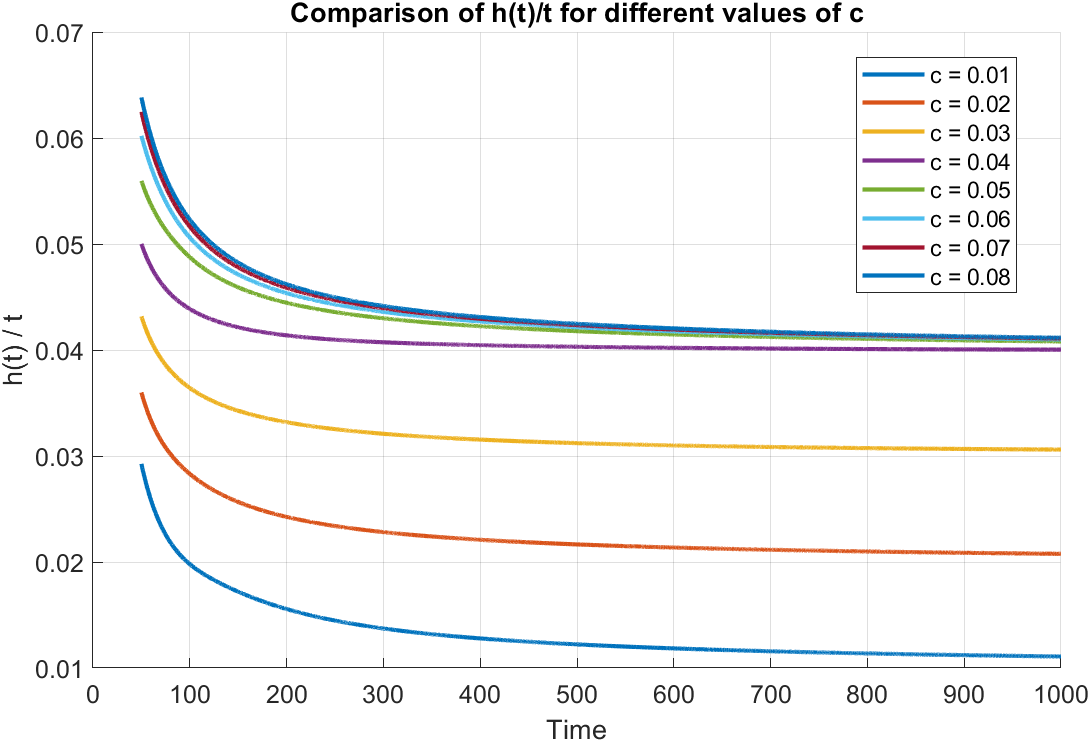}
		\caption{Comparison of $\frac{h(t)}{t}$ for $t \in [50, 1000]$ with different values of $c$.}
		\label{fig:hovert_diff_c}
	\end{figure}
	
	\bibliographystyle{abbrv}
	\bibliography{ref}
	
\end{document}